\documentclass[11pt]{article}
\usepackage[T1]{fontenc}
\usepackage{amsfonts}
\usepackage{amsmath}
\usepackage{amssymb}
\usepackage{amsthm}
\usepackage{bbm}
\usepackage{bm}
\usepackage{mathrsfs}
\usepackage{verbatim}
\usepackage{setspace}
\usepackage{color}
\usepackage{pdfsync}
\usepackage{enumitem}
\usepackage{graphicx}
\usepackage{subfigure}
\usepackage{tikz}
\usetikzlibrary{patterns}

\theoremstyle{plain}
\newtheorem{theorem}{Theorem}[section]
\newtheorem{proposition}[theorem]{Proposition}
\newtheorem{lemma}[theorem]{Lemma}
\newtheorem{corollary}[theorem]{Corollary}

\theoremstyle{definition}
\newtheorem{definition}[theorem]{Definition}
\newtheorem{remark}[theorem]{Remark}

\newtheorem{example}[theorem]{Example}

\newtheorem{assumption}[theorem]{Assumption}

\theoremstyle{remark}

\renewenvironment{thebibliography}[1]{%
\begin{oldthebibliography}{#1}%
\setlength{\baselineskip}{1em}
\linespread{.2}
\small
\setlength{\parskip}{0.25ex}%
\setlength{\itemsep}{.20em}%
}%
{%
\end{oldthebibliography}%
}
\newcommand{\eps}{\varepsilon}

\newcommand{\F}{\mathbb{F}}

\newcommand{\N}{\mathbb{N}}
\newcommand{\Q}{\mathbb{Q}}
\newcommand{\R}{\mathbb{R}}

\newcommand{\W}{\mathbb{W}}

\newcommand{\cB}{\mathcal{B}}
\newcommand{\cC}{\mathcal{C}}
\newcommand{\cD}{\mathcal{D}}
\newcommand{\cE}{\mathcal{E}}
\newcommand{\cF}{\mathcal{F}}

\newcommand{\cL}{\mathcal{L}}

\newcommand{\cR}{\mathcal{R}}

\newcommand{\cT}{\mathcal{T}}
\newcommand{\cU}{\mathcal{U}}
\newcommand{\cV}{\mathcal{V}}

\newcommand{\cY}{\mathcal{Y}}

\newcommand{\bC}{\mathbf{C}}

\newcommand{\bI}{\mathbf{I}}

\newcommand{\bS}{\mathbf{S}}
\newcommand{\bt}{\boldsymbol{t}}

\DeclareMathOperator{\USA}{USA}

\DeclareMathOperator{\supp}{supp}

\DeclareMathOperator{\Law}{Law}
\newcommand{\1}{\mathbf{1}}

\numberwithin{equation}{section}

\usepackage[pdfborder={0 0 0}]{hyperref}
\hypersetup{
  urlcolor = black,
  pdfauthor = {Mathias Beiglbock, Marcel Nutz, Florian Stebegg},
  pdfkeywords = {Skorokhod Embedding Problem; Randomized Stopping Time; Kantorovich Duality},
  pdftitle = {Fine Properties of the Optimal Skorokhod Embedding Problem},
  pdfsubject = {Fine Properties of the Optimal Skorokhod Embedding Problem},
  pdfpagemode = UseNone
}

\begin{document}

\title{\vspace{-1em}
Fine Properties of the Optimal Skorokhod Embedding Problem\footnote{The authors thank two anonymous referees for their detailed comments.}
\date{\today}
\author{
  Mathias Beiglb\"ock%
  \thanks{University of Vienna, Department of Mathematics, mathias.beiglboeck@univie.ac.at. Research supported by FWF Grant Y-782.}
  \and
  Marcel Nutz%
  \thanks{
  Columbia University, Departments of Statistics and Mathematics, mnutz@columbia.edu. Research supported by an Alfred P.\ Sloan Fellowship and NSF Grants DMS-1512900 and DMS-1812661. MN is grateful to Alex Cox and Johannes Ruf for helpful discussions regarding Sections~\ref{se:cave} and~\ref{se:localMartEx}, respectively.
  }
  \and
  Florian Stebegg%
  \thanks{
  Columbia University, Department of Statistics, florian.stebegg@columbia.edu.
  }  
 }
}
\maketitle \vspace{-1.2em}

\begin{abstract}
We study the problem of stopping a Brownian motion at a given distribution $\nu$ while optimizing a reward function that depends on the (possibly randomized) stopping time and the Brownian motion. Our first result establishes that the set $\cT(\nu)$ of stopping times embedding~$\nu$ is weakly dense in the set $\cR(\nu)$ of randomized embeddings. In particular, the optimal Skorokhod embedding problem over $\cT(\nu)$ has the same value as the relaxed one over $\cR(\nu)$ when the reward function is semicontinuous, which parallels a fundamental result about Monge maps and Kantorovich couplings in optimal transport. A second part studies the dual optimization in the sense of linear programming. While existence of a dual solution failed in previous formulations, we introduce a relaxation of the dual problem that exploits a novel compactness property and yields existence of solutions as well as absence of a duality gap, even for irregular reward functions. This leads to a monotonicity principle which complements the key theorem of Beiglb\"ock, Cox and Huesmann [Optimal transport and Skorokhod embedding, \emph{Invent.\ Math.}, 208:327--400, 2017]. We show that these results can be applied to characterize the geometry of optimal embeddings through a variational condition.
\end{abstract}

\vspace{1em}

{\small
\noindent \emph{Keywords} Skorokhod Embedding; Randomized Stopping Time; Duality

\noindent \emph{AMS 2010 Subject Classification}
60G40; %
60G44; %
90C08 %
}
\vspace{2em}

\section{Introduction}\label{se:intro}

Given a centered and suitably integrable probability distribution $\nu$ and a Brownian motion $B$, the Skorokhod embedding problem~\cite{Skorokhod.65} consists in finding a stopping time $\tau$ which embeds $\nu$; that is, $B_{\tau}$ has distribution~$\nu$. A number of solutions exist and we denote the set of all such $\tau$ by $\cT(\nu)$. Examples include the classical Root~\cite{Root.69} and Rost~\cite{Rost.76} embeddings; see \cite{Obloj.04} for a survey of various solutions. The \emph{optimal} Skorokhod embedding problem is to maximize (or minimize) the expectation $E[G_{\tau}]$ over $\tau\in \cT(\nu)$, where $G_{t}=G((B_{s})_{s\leq t},t)$ is an adapted functional. For instance, the Root embedding minimizes $E[\tau^{2}]$ and the Rost embedding maximizes it; cf.~\cite{Rost.76}. Some early works related to optimal Skorokhod embeddings are \cite{BrownHobsonRogers.01, Hobson.98, HobsonPedersen.02, MadanYor.02}. More recently, connections to numerous questions in probability, analysis and finance as well as extensions such as the multi-marginal case \cite{BeiglbockCoxHuesmann.17, CoxOblojTouzi.15, GuoTanTouzi.15a} have emerged  and led to substantial activity; we refer to~\cite{Hobson.11} for a survey with many more references.

A perspective pioneered by \cite{BeiglbockCoxHuesmann.14} is to see Skorokhod embeddings along the lines of optimal transport theory: optimal stopping times are analogues of Monge solutions to an optimal transport problem between the Wiener measure and~$\nu$. A more general formulation of the embedding problem allows for a randomized stopping time; this can be interpreted as using an enlarged filtration or allowing for an external randomization (see Definition~\ref{de:RST}). The corresponding set is denoted $\cR(\nu)$ and gives rise to a relaxed formulation of the optimal Skorokhod embedding problem. Continuing the analogy, randomized stopping times correspond to transports in the sense of Kantorovich. We refer to \cite{AmbrosioGigli.13,RachevRuschendorf.98a,RachevRuschendorf.98b,Villani.03,Villani.09} for background on optimal transport.

In the existing literature on the optimal Skorokhod embedding problem, a number of optimal embeddings have been found for specific reward functionals $G$; e.g., \cite{CoxObloj.11, Hobson.98, HobsonKlimmek.12, HobsonKlimmek.15, HobsonNeuberger.12}. In these examples, optimal embeddings are often unique and belong to the class $\cT(\nu)$ of stopping times even if one allows for randomized stopping times a priori. On the other hand, results concerning the general structure of the optimal Skorokhod embedding problem such as~\cite{BeiglbockCoxHuesmann.14, GuoTanTouzi.15,GuoTanTouzi.15a} use the formulation with randomized stopping times. Thus, an obvious---but not previously addressed---question is how to bridge this gap: when can the supremum value over randomized stopping times be achieved with stopping times? More generally, can randomized stopping times be approximated by stopping times in a suitable sense?

The analogy to classical optimal transport theory is apparent. While the Kantorovich relaxation is crucial to develop the theory, most examples of specific interest lead to transport maps in the sense of Monge. For instance, Brenier's theorem states that the optimal transport for the quadratic cost (or reward, after changing the sign) is given by the gradient of a convex function if the first marginal measure is absolutely continuous (or, more generally, regular \cite{McCann.95}). When the first marginal is atomless, it was shown in~\cite{Ambrosio.03} that Monge transports form a dense subset of Kantorovich couplings and that the values of the Monge and Kantorovich transport problems agree for bounded continuous reward functions. This result was extended to unbounded continuous functions in~\cite{Pratelli.07}; see also~\cite{Lacker.18a} for a survey of related density properties.

In the first part of this paper, we provide comparable results for the optimal Skorokhod embedding problem. In Theorem~\ref{th:MongeValue} we show that the optimal embedding problems over $\cT(\nu)$ and $\cR(\nu)$ have the same value whenever the reward functional $G$ is lower semicontinuous in time. This assertion can fail when $G$ is not lower semicontinuous (Example~\ref{ex:counterexApprox}), and that failure highlights a contrast with classical approximation results for randomized stopping times that hold without regularity conditions~\cite{BaxterChacon.77, Dalang.84,ElKarouiLepeltierMillet.92, Ghoussoub.83}: the constraint given by the fixed embedding target~$\nu$ is not compatible with the classical results and techniques. 
In Theorem~\ref{th:MongeDense} we establish the more general result that $\cT(\nu)\subseteq \cR(\nu)$ is dense for weak convergence. Our proof is constructive and gives insight into why the first result can fail when $G$ is irregular. In a nutshell, the idea is to use a short initial segment of the Brownian path as a randomization device for the rest of the problem. In fact, we show that for randomized stopping times $\xi\in\cR(\nu)$ and reward functions $G$ that do not depend on an initial segment of the paths (in a sense to be made precise), the expectation of $G$ stopped at $\xi$ can be exactly replicated by a stopping time $\tau\in\cT(\nu)$, without any need for approximation (cf.\ Proposition~\ref{pr:RandomGenerator}). These ideas seem to be novel in the literature.

The optimal Skorokhod embedding problem over $\cR(\nu)$ is a linear programming problem with constraints and thus has a dual programming problem. Formally, the domain of the dual problem is the set of all pairs $(M,\psi)$ where $M$ is a martingale with $M_{0}=0$ and $\psi:\R\to\R$ is a function such that $M_{t}+\psi(B_{t})\geq G_{t}$ for  $t\geq0$. The dual problem then consists in minimizing $\nu(\psi):=\int \psi \,d\nu$ over all such pairs $(M,\psi)$. More specifically, \cite{BeiglbockCoxHuesmann.14} uses martingales $M$ that satisfy a quadratic growth condition relative to $B$ and functions $\psi$ that are continuous and satisfy a growth condition, or \cite{GuoTanTouzi.15a} works with similar functions $\psi$ and supermartingales $M$.
Such a dual problem has been used in numerous examples to help determine specific optimal Skorokhod embeddings; e.g., \cite{CoxKinsley.18,CoxObloj.11, CoxWang.13, HenryLabordereOblojSpoidaTouzi.12, Hobson.98, HobsonKlimmek.12, HobsonKlimmek.15}. Moreover, it plays a vital role in~\cite{BeiglbockCoxHuesmann.14} (see also \cite{GuoTanTouzi.15}) in deriving a general monotonicity principle that describes the barriers representing optimal Skorokhod embeddings through their hitting times. While dual solutions have been found in those specific examples, it has been observed in~\cite{BeiglbockCoxHuesmann.14} that the dual problem fails to have a solution in general; that is, the minimum is not attained. We refer to the survey~\cite{Hobson.11} for further references.

The second part of this paper introduces a novel relaxation of the dual problem and establishes the existence of its solution as well as the absence of a duality gap. Both of these results are obtained without continuity conditions for $G$, thus paralleling the generality of Kellerer's theorem~\cite{Kellerer.84} in optimal transport and improving the results on the absence of a duality gap in~\cite{BeiglbockCoxHuesmann.14,GuoTanTouzi.15a}. 
There are no previous existence results in our setting. We can mention \cite{GhoussoubKimPalmer.18} for a PDE approach with attainment in a different dual problem. 
Here the reward $G$ is given by the integral of a continuous, finite-dimensional Lagrangian with exponential decay and the marginals are absolutely continuous with compact support.
Remarkably, \cite{GhoussoubKimPalmer.18} allows for multidimensional Brownian motion.

Even if $G$ is continuous and bounded, our main issue is the lack of compactness for the martingale component. Broadly speaking, for a given minimizing sequence $(M^{n},\psi^{n})$ the functions $\psi^{n}$ may have large positive values, so that the inequality $M^{n}_{t}+\psi^{n}(B_{t})\geq -\|G^{-}\|_{\infty}$ does not immediately result in a lower bound for $M^{n}$. On the other hand, the limit of a sequence of continuous-time (super-)martingales may fail to be a supermartingale in the absence of a lower bound. A crucial feature of our relaxation is to work with local martingales that have uniform lower bounds on sets where the Brownian motion is bounded and bounded away from the extremes of the support of~$\nu$. It turns out that on such sets we can obtain enough compactness, while preserving the ``weak'' side of the duality. Less surprisingly, our functions $\psi$ are merely in $L^{1}(\nu)$ rather than continuous. We provide counterexamples showing that our positive results on duality can fail if one were to insist on true (super-)martingales or continuous functions; cf.~Section~\ref{se:counterex}.

An analogous duality result was obtained in~\cite{BeiglbockNutzTouzi.15} (see also \cite{BeiglbockLimObloj.17}) for the so-called martingale optimal transport problem in a single period. The main compactness issue sketched above does not arise in that setting; basically, limits of martingales remain martingales in the discrete-time setting. A softer part of our proof does overlap with~\cite{BeiglbockNutzTouzi.15}, namely the use of capacity theory to generalize from continuous to measurable reward functionals.
Martingale optimal transport refers to a transport problem where the transports are constrained to be martingales. Originally motivated by model uncertainty in financial applications~\cite{Hobson.98}, 
a rich literature has emerged around this subject; see \cite{Hobson.11, Obloj.04, Touzi.14} for surveys and, e.g., \cite{AcciaioBeiglbockPenknerSchachermayer.12, BeiglbockHenryLaborderePenkner.11, BouchardNutz.13, BeiglbockHenryLabordereTouzi.15, BurzoniFrittelliMaggis.15, CheriditoKupperTangpi.14, DeMarcoHenryLabordere.15, FahimHuang.14, GhoussoubKimLim.15, GozlanRobertoSamsonTetali.14, HuesmannStebegg.18, Nutz.13, NutzStebegg.16, NutzStebeggTan.17, Zaev.14} for models in discrete time and \cite{BiaginiBouchardKardarasNutz.14, CoxHouObloj.14, CoxObloj.11, DolinskySoner.12, DolinskySoner.14, GalichonHenryLabordereTouzi.11, HenryLabordereOblojSpoidaTouzi.12, HenryLabordereTanTouzi.14, HirschProfetaRoynetteYor.11, Hobson.15, KallbladTanTouzi.15, NeufeldNutz.12, Nutz.14, Stebegg.14, BeiglbockHuesmannStebegg.15, TanTouzi.11} for continuous time. Many of these works exploit connections to the Skorokhod embedding problem. In particular, the optimal Skorokhod embedding problem can be related, by a time change, to the continuous-time martingale transport between two marginals. While a duality theory with dual existence has been elusive for the latter transport problem, the arguments in the present paper are expected to lead to such a result since the compactness issues are similar. This will be investigated in future work.

Our results on duality allow us to derive a monotonicity principle in the spirit of~\cite{BeiglbockNutzTouzi.15}, under an integrability condition on~$G$. Namely, there exists a universal support $\Gamma$ that characterizes all optimal embeddings: $\xi\in\cR(\nu)$ is optimal if and only if $\xi(\Gamma)=1$. 
This complements the monotonicity principle of~\cite{BeiglbockCoxHuesmann.14} which gives a geometric condition on the support that is necessary for optimality, but not sufficient. By contrast, our result yields a necessary and sufficient condition. However, the geometry of the support is merely described in a weaker form, through the construction of $\Gamma$ as the set where a dual optimizer equals the reward functional (we exemplify in Section~\ref{se:cave} how this can be utilized to obtain more specific geometric statements). It is an interesting question for future research how to unify these results, though the answer does not seem to be within reach with present concepts and knowledge.
It is noteworthy that the integrability condition is crucial for any monotonicity principle to hold; indeed, we provide a surprising example showing that for general~$G$, the optimality of embeddings cannot be characterized in terms of the support. This contrasts with cyclical monotonicity properties in classical optimal transport~\cite{Villani.09} as well as the result of~\cite{BeiglbockNutzTouzi.15} which suggest that optimality of transports can be characterized by their geometry in great generality.

In Section~\ref{se:cave} we illustrate how our result on dual existence can be exceptionally useful to describe the geometry of optimal Skorokhod embeddings in a concrete case. Namely, we specialize to a particular class of convex-concave reward functions $G_{t}=g(t)$ which give rise to embeddings that can be represented as hitting times of sets consisting of both a left and a right barrier in the $(t,x)$-plane. This class of ``cave'' embeddings with a double boundary, unifying the ones of Root and Rost, was introduced in~\cite{BeiglbockCoxHuesmann.14}. In contrast to Root's and Rost's, such barriers are not determined by~$\nu$ alone but depend on the details of the function $g$, and the arguments in~\cite{BeiglbockCoxHuesmann.14} do not lead to a characterization of the optimal barriers. We provide such a characterization through a variational condition, very much inspired by \cite{CoxKinsley.18} which studies a different class of reward functions. The condition is related to the principle of smooth fit for free boundary problems, and thus it is no surprise that the proof of sufficiency for optimality takes the form of a verification argument. In~\cite{CoxKinsley.18}, the proof of necessity is an impressive tour de force through a discretization that is carried out in the separate paper~\cite{CoxKinsley.18b}. On the strength of our result on dual existence, we can provide a much more direct proof. First, we show that when the reward function is Markovian (that is, a deterministic function of time and current state), the abstract martingale $M$ in the dual can be replaced by a function of two variables. Then, we can apply relatively soft probabilistic arguments to derive the variational characterization. Importantly, our proof reveals that similar conditions should extend to much more general classes of embeddings, and also suggests that regularity results for the stopping boundaries can be obtained through the dual maximizer. These aspects will be investigated in separate work.

The remainder of this paper is organized as follows. Section~\ref{se:primalProblem} details the optimal Skorokhod embedding problem and states the equality of the formulations with randomized and non-randomized stopping times for regular reward functions. The proof is given in Section~\ref{se:approximation} where it is shown more generally how randomized stopping times can be approximated by non-randomized ones. Section~\ref{se:dualProblem} introduces the relaxed dual problem and provides the existence of a solution. The absence of a duality gap is proved in Section~\ref{se:duality} where we also state the monotonicity principle. In Section~\ref{se:cave} we discuss cave embeddings and characterize the optimal barriers by exploiting our abstract results. Counterexamples regarding the formulation of the dual problem and the monotonicity principle are gathered in Section~\ref{se:counterex}. For simplicity of exposition, we use a second moment condition on $\nu$ throughout the paper. Appendix~\ref{se:firstMoment} explains how this can be replaced by a finite first moment without much effort.

\section{The Primal Problem}\label{se:primalProblem}

Let $C_{0}(I)$ be the set of continuous real-valued functions $\omega=(\omega_{t})_{t\in I}$ with $\omega_{0}=0$, for any interval $0\in I \subseteq \R$. We denote by $S$ the space of stopped continuous paths; that is, pairs $(\omega,t)$ with $t\in\R_{+}$ and $\omega\in C_{0}([0,t])$. The set $S$ is a Polish space under the topology induced by the metric
\[
  d((f,s),(g,t))= |t-s| \vee \sup_{u\geq0}|f_{u\wedge s}-g_{u\wedge t}|.
\]
We note that any $(\omega,t)\in C_{0}(\R_{+})\times\R_{+}$ projects to an element $(\omega|_{[0,t]},t)$ of~$S$. Conversely, we can embed $S$ in $C_{0}(\R_{+})\times\R_{+}$, say by continuing any stopped path in a constant fashion. This identification will sometimes be used implicitly.

We fix a reward function $G: S\to [0,\infty]$. Any such $G$ can be seen as an ``adapted'' process (in the sense of Galmarino's test) on the canonical space~$C_{0}(\R_{+})$ in that $G_{t}(\omega):= G(\omega,t)$ depends only on $\omega|_{[0,t]}$. If $G$ is Borel measurable, it can be identified with an optional process on $C_{0}(\R_{+})$ when the latter is equipped with the (raw) canonical filtration $\F=(\cF_{t})_{t\geq0}$ generated by the coordinate-mapping process $B$; i.e., $\cF_{t}=\sigma(B_{s},\,s\leq t)$ where $B_{t}(\omega)=\omega_{t}$ for $(\omega,t)\in C_{0}(\R_{+})\times\R_{+}$. Conversely, any adapted (optional) process induces a function (Borel function) on $S$. We recall that with respect to $\F$, any measurable adapted process is already optional (and even predictable) and refer to \cite[Nos.\ IV.94--103, pp.\,145--152]{DellacherieMeyer.78} or \cite[Section~3]{BeiglbockCoxHuesmann.14} for further background on path spaces and their filtrations. 

We equip $C_{0}(\R_{+})$ with the Wiener measure $\W$ so that $B$ is a standard Brownian motion with initial distribution $B_{0}\sim\delta_{0}$. In what follows, probabilistic notions generally refer to the Wiener measure and the canonical filtration unless a different context is given. All metric spaces are equipped with their Borel $\sigma$-fields,  (in)equalities of processes are to be understood up to evanescence (meaning that the projection of the exceptional set is $\W$-null) and (in)equalities of random variables are in the almost-sure sense.

Consider a centered distribution $\nu$ on $\R$ with finite second moment (but see Appendix~\ref{se:firstMoment} for a generalization to finite first moment). Let~$\cT$ be the set of all a.s.\ finite $\F$-stopping times and let $\cT(\nu)$ be the subset of all $\tau\in\cT$ such that $E[\tau]<\infty$ and $B_{\tau}\sim \nu$. The set $\cT(\nu)$ is nonempty; a number of classical embeddings $\tau\in\cT(\nu)$ can be found in~\cite{Obloj.04}. The optimal Skorokhod embedding problem with respect to stopping times is
\[
  \bS_{\cT}(G)=\sup_{\tau\in\cT(\nu)} E[G_{\tau}].
\]
Here and below, outer integrals are used whenever the integrand is not measurable.
The optimal Skorokhod embedding problem is often discussed with respect to randomized stopping times, defined as follows.

\begin{definition}\label{de:RST}
A probability measure $\xi$ on $C_0(\R_+) \times \R_+$ with disintegration $\xi(d\omega,dt) = \W(d\omega)\xi_\omega(dt)$ is a \emph{randomized stopping time} if $\xi_\omega(\R_+)=1$ for almost all $\omega$ and $\omega\mapsto \xi_\omega([0,t])$ is $\cF_t$-measurable for all $t\geq0$.
We denote the set of randomized stopping times by $\cR$.
\end{definition}

We emphasize that our randomized stopping times are defined to stop in finite time. We can embed $\cT$ in $\cR$ in a canonical way: $\tau$ is mapped to the randomized stopping time $\xi^{\tau}$ with kernel $\xi^{\tau}_\omega := \delta_{\tau(\omega)}$. The image of $\cT$ under this embedding is denoted by $\cR_{\cT}$; we will refer to its elements as \emph{non-randomized stopping times}. We note the analogy between $\cR$ and Kantorovich transports on the one hand versus $\cR_{\cT}$ and Monge transports on the other.

\begin{definition}\label{de:Rembedding}
  The set $\cR(\nu)$ consists of all randomized stopping times $\xi\in\cR$ such that $\xi(\bt)<\infty$ and $\xi\circ B^{-1}=\nu$. We write $\cR_{\cT}(\nu)$ for the subset of non-randomized stopping times.
\end{definition}

Here $\bt$ is the projection given by $\bt(\omega,t)=t$ and the two conditions correspond to a finite first moment and the marginal constraint $\nu$. In particular, if $\xi=\xi^{\tau}$ represents a stopping time, then $\xi(\bt)=E[\tau]$ and $\xi\circ B^{-1}$ is the law of $B_{\tau}$.
The optimal Skorokhod embedding problem is then given by
\[
  \bS(G)=\sup_{\xi\in\cR(\nu)} \xi(G)
\]
where again $\xi(G):=\int G\,d\xi$. More briefly, we will also call this the \emph{primal problem}. 

It is an obvious question, not previously addressed in the literature, to give a general condition under which the two formulations of the optimal Skorokhod embedding problem have the same value. The answer we provide is:  whenever the reward function $G$ has sufficiently regular paths.

\begin{theorem}\label{th:MongeValue}
Let $G: C_0(\R_+) \times \R_+ \to [0,\infty)$ be Borel and adapted, and let $t\mapsto G_{t}(\omega)$ be lower semicontinuous for all $\omega\in C_0(\R_+)$. Then
\[
  \sup_{\xi\in\cR(\nu)} \xi(G) = \sup_{\tau\in\cT(\nu)} E[G_{\tau}].
\]
\end{theorem}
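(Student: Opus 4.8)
One inequality is immediate: $\cR_\cT(\nu)\subseteq\cR(\nu)$ and $\xi^\tau(G)=E[G_\tau]$, so $\sup_{\tau\in\cT(\nu)}E[G_\tau]\le\sup_{\xi\in\cR(\nu)}\xi(G)$. For the reverse inequality I would first reduce to a bounded reward: $G\wedge n\uparrow G$ pointwise, each $G\wedge n$ is again Borel, adapted and lower semicontinuous in time, and monotone convergence gives $\xi(G)=\sup_n\xi(G\wedge n)$ for every fixed $\xi\in\cR(\nu)$. Since $\sup_{\tau}E[G_\tau]\ge\sup_{\tau}E[(G\wedge n)_\tau]$, it therefore suffices to prove $\sup_{\tau\in\cT(\nu)}E[G_\tau]\ge\xi(G)$ for each $\xi\in\cR(\nu)$ under the extra assumption $0\le G\le1$, and then let $n\to\infty$. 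From now on assume $0\le G\le 1$.

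\textbf{Main step: time-lag plus the random generator.} Fix $\xi\in\cR(\nu)$ and $\eps>0$. Using lower semicontinuity of $t\mapsto G_t(\omega)$ I would pass, for a small lag $\delta>0$, to a reward $G^{(\delta)}$ that is \emph{blind to the initial segment} $[0,\delta]$ in the precise sense required by Proposition~\ref{pr:RandomGenerator} (concretely: $G^{(\delta)}_t(\omega)$ is constant for $t\le\delta$ and for $t>\delta$ depends on $\omega$ only through the increments $(\omega_{s}-\omega_{\delta})_{s\in[\delta,t]}$), chosen so that $G^{(\delta)}\le G$ and $\xi(G^{(\delta)})\uparrow\xi(G)$ as $\delta\downarrow0$. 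Lower semicontinuity in time is exactly what makes the last convergence possible: collapsing the influence of the short initial segment forces the reward to be collected a little later, and for a time-lsc integrand this can only decrease its value, by an amount that vanishes with $\delta$; conversely this is precisely why the conclusion fails for rewards that jump downward in time, cf.\ Example~\ref{ex:counterexApprox}. Now choose $\delta$ with $\xi(G^{(\delta)})\ge\xi(G)-\eps$ and apply Proposition~\ref{pr:RandomGenerator} to $\xi$ and $G^{(\delta)}$: because $G^{(\delta)}$ ignores $[0,\delta]$, the external randomization encoded in the kernels $\xi_\omega$ can be reproduced measurably from the Brownian path on $[0,\delta]$, and one obtains $\tau\in\cT(\nu)$ with $E\big[G^{(\delta)}_\tau\big]=\xi(G^{(\delta)})$. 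Since $G\ge G^{(\delta)}$,
\[
  \sup_{\sigma\in\cT(\nu)}E[G_\sigma]\ \ge\ E[G_\tau]\ \ge\ E\big[G^{(\delta)}_\tau\big]\ =\ \xi(G^{(\delta)})\ \ge\ \xi(G)-\eps .
\]
Letting $\eps\downarrow0$ gives $\sup_{\tau\in\cT(\nu)}E[G_\tau]\ge\xi(G)$ for every $\xi\in\cR(\nu)$, and taking the supremum over $\xi$ finishes the proof. (Equivalently, running $\delta_k\downarrow0$ produces $\tau_k\in\cT(\nu)$ with $\liminf_k E[G_{\tau_k}]\ge\xi(G)$; by Theorem~\ref{th:MongeDense} one moreover has $\xi^{\tau_k}\to\xi$, so this argument also reproves the density statement along the way.)

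\textbf{Main obstacle.} The crux is Proposition~\ref{pr:RandomGenerator}: converting external randomization into path-measurable randomization while still landing \emph{exactly} on $\nu$. The naive attempt — wait a deterministic time $\delta$ and extract an independent uniform from $\cF_\delta$ (for instance from the Brownian bridge on $[0,\delta]$, which is independent of $B_\delta$ and of the increments after $\delta$) — fails on the embedding constraint: after time $\delta$ the motion sits at $B_\delta\neq0$, and for general $\nu$ (e.g.\ compactly supported, or sharply concentrated away from $0$) the law of $B_\delta$ is not dominated by $\nu$ in convex order, so the embedding cannot be completed from there. The content of Proposition~\ref{pr:RandomGenerator} is a surgery that harvests the randomization from a piece of path whose terminal law stays compatible with $\nu$; this is exactly why the reward is first arranged to be blind to that piece. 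The remaining points — giving the precise meaning of ``blind to an initial segment,'' constructing $G^{(\delta)}$ and verifying $\xi(G^{(\delta)})\uparrow\xi(G)$ from the time-lsc hypothesis, together with the routine measurability issues on the stopped-path space $S$ — are comparatively soft but still require care.
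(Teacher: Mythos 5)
Your first two steps (the trivial inequality and the reduction to bounded $G$ via $G\wedge n$) are fine, but the main step contains genuine gaps. The most serious one is that the notion of ``blind to the initial segment'' you impose on $G^{(\delta)}$ is not the one in Proposition~\ref{pr:RandomGenerator}: the class $C_b^\eta$ consists of functionals that may depend on the level $\omega_{\tau_\eta(\omega)}$ and on the whole path after $\tau_\eta$, and are blind only to \emph{how} that level was reached, whereas you require $G^{(\delta)}_t(\omega)$, for $t>\delta$, to depend on $\omega$ only through the increments $(\omega_s-\omega_\delta)_{s\in[\delta,t]}$, i.e.\ to forget the spatial level as well. With this stronger blindness the key property $G^{(\delta)}\le G$ and $\xi(G^{(\delta)})\uparrow\xi(G)$ is impossible even for very regular rewards: take $G(\omega,t)=|\omega_t|\wedge 1$ (bounded, continuous, adapted, continuous in $t$). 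For $t>\delta$, fixing the increments and varying $\omega_\delta$ forces $G^{(\delta)}(\omega,t)\le\inf_{c\in\R}\bigl(|c+\omega_t-\omega_\delta|\wedge 1\bigr)=0$, and similarly the constant on $[0,\delta]$ must be $\le 0$, so $\xi(G^{(\delta)})\le 0$ while $\xi(G)=\int |x|\wedge 1\,\nu(dx)>0$ for $\nu\ne\delta_0$. Lower semicontinuity in $t$ cannot repair this, because the obstruction sits in the $\omega$-variable; and even with the correct ($C_b^\eta$) notion, a \emph{dominated} approximation $G^{(\delta)}\le G$ with $\xi(G^{(\delta)})\to\xi(G)$ would require regularity of $G$ in $\omega$, which is not assumed (Borel only). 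The paper avoids any such pointwise modification: it proves the density $\cR_{\cT}(\nu)\subseteq\cR(\nu)$ (Theorem~\ref{th:MongeDense}) and then invokes the Jacod--M\'emin characterization of weak convergence for measures with common first marginal $\W$, which upgrades weak convergence to convergence under bounded test functions that are merely measurable in $\omega$ and continuous in $t$; lower semicontinuity in $t$ and unboundedness are then handled by monotone approximation. The common-marginal trick is exactly what dispenses with regularity in $\omega$, and it is absent from your argument.

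Second, even granting a suitable $G^{(\delta)}$, you apply Proposition~\ref{pr:RandomGenerator} to an arbitrary $\xi\in\cR(\nu)$, but its hypothesis is $\xi\in\cR^\eta(\nu)$, i.e.\ $\xi$ stops only after $\tau_\eta$, and this is genuinely used: without it, the averaged kernel $\bar\xi_{f\oplus\omega}$ constructed in its proof would assign mass before $\tau_\eta$ in a way that depends on the level $h(f)$, destroying adaptedness. For a general $\xi$ one must first replace it by a nearby element of $\cR^\eta(\nu)$; this is Proposition~\ref{pr:RplusDense}, a separate and substantive surgery which restarts the problem at $\rho_\eta$ and uses an auxiliary embedding $\chi_\eta$ of $\nu$ from $\mu_\eta=\tfrac12(\delta_{-\sqrt{\eta}}+\delta_{\sqrt{\eta}})$ to keep the terminal law equal to $\nu$ (and requires $\nu\ne\delta_0$). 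Your ``main obstacle'' paragraph attributes this surgery to Proposition~\ref{pr:RandomGenerator}, but that proposition only performs the exact de-randomization for stopping times already bounded away from $0$, and only for test functions in $C_b^\eta$ (in particular continuous, whereas your $G^{(\delta)}$ would be merely Borel). Without the approximation step and without the weak-convergence machinery, the chain of inequalities in your main display is not justified.
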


The proof is stated in the next section; it is a consequence of Theorem~\ref{th:MongeDense} below and the proof of the latter will also help to understand where and how the regularity of $G$ comes into play. Theorem~\ref{th:MongeValue} should be contrasted with results in unconstrained optimal stopping where value functions over stopping times and randomized stopping times agree for general measurable reward functions; see e.g.\ \cite[Theorem~2.1]{GyongySiska.08}. In particular, the following (well-known) example shows that lower semicontinuity of $G$ is an important assumption in Theorem~\ref{th:MongeValue}: the nonstandard constraint given by the marginal $\nu$ markedly changes the character of the question, as will be discussed in more detail in Section~\ref{se:approximation}.

\begin{example}\label{ex:counterexApprox}
  Let $\nu$ have an atom of mass $a\in(0,1)$ at the origin and let $G$ be the bounded upper semicontinuous function $G(\omega,s)=\1_{\{0\}}(s)$. Then
  \[
    \sup_{\xi\in\cR(\nu)} \xi(G)=a > 0 = \sup_{\tau\in\cT(\nu)} E[G_{\tau}].
  \]
  Indeed, any $\tau\in\cT$ satisfies $\W\{\tau=0\}\in\{0,1\}$ by the Blumenthal 0-1 law. Hence, any $\tau\in\cT(\nu)$ must be strictly positive a.s.\ which entails that $E[G_{\tau}]=0$. On the other hand, we can find $\xi\in\cR(\nu)$ with $\xi(\{0\})=a$ and any such $\xi$ attains the supremum.
\end{example}

\section{Approximation of Randomized Stopping Times with Fixed Marginal}\label{se:approximation}

The main aim of this section is a density result with respect to the weak topology on $C_0(\R_+) \times \R_+$ (induced by the continuous bounded functions on that space, as usual). We recall that  $\cR_{\cT}(\nu)$ is the embedding of $\cT(\nu)$ in $\cR$.

\begin{theorem}\label{th:MongeDense}
Let $\nu$ be a centered probability on $\R$ with finite second moment. Then $\cR_{\cT}(\nu)\subseteq\cR(\nu)$ is dense for the topology of weak convergence.
\end{theorem}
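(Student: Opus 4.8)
The plan is to use the metrizability of weak convergence on probability measures over the Polish space $C_0(\R_+)\times\R_+$: it suffices to show that an arbitrary $\xi\in\cR(\nu)$ is a weak limit of a sequence in $\cR_{\cT}(\nu)$, equivalently, that for every $\eps>0$ and every finite family $f_1,\dots,f_n$ of bounded continuous test functions there is $\tau\in\cT(\nu)$ with $|E[f_i(B,\tau)]-\xi(f_i)|<\eps$ for all $i$. The mechanism behind the construction of such a $\tau$ is that a short initial piece of the Brownian path can serve as an independent source of randomness. For $h>0$, the path on $[0,h]$ splits into the Brownian bridge $\beta^{h}:=(B_s-\tfrac{s}{h}B_h)_{s\le h}$ and the endpoint $B_h$; under $\W$ these are independent, and $\beta^{h}$ is moreover independent of the post-$h$ increments $(B_{h+s}-B_h)_{s\ge0}$. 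From $\beta^{h}$ one reads off an $\cF_h$-measurable randomizer, in fact a whole i.i.d.\ sequence of uniform variables, that is independent of $\sigma(B_{h+s}:s\ge0)$.

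The de-randomization step is Proposition~\ref{pr:RandomGenerator}. In the form I would use it: if $\eta$ is a randomized stopping time that does not stop before time $h$ and whose kernel, as well as the reward functional $G$, depends on the path only through $(\omega_s)_{s\ge h}$, then plugging the bridge-randomizer into the generalized right-continuous inverse $u\mapsto\inf\{t:\eta_\omega([0,t])\ge u\}$ yields a genuine stopping time $\tau\in\cT$ with $E[G_\tau]=\eta(G)$, hence in particular $\Law(B_\tau)=\Law(B_\eta)$, so $\eta\in\cR(\nu)$ forces $\tau\in\cT(\nu)$. The reason it works is that, conditionally on the full path, the randomizer is deterministic, so $\tau$ is non-randomized; but since neither the kernel nor $G$ ever looks at the bridge, integrating the bridge out shows that $\tau$ reproduces $\eta$ faithfully on such $G$.

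The heart of the matter is to bring a general $\xi\in\cR(\nu)$ into the scope of this proposition at the cost of an arbitrarily small weak perturbation. For small $h$ I would build $\xi^{h}\in\cR(\nu)$ which ignores $\omega|_{[0,h]}$ and never stops before $h$, with $\xi^{h}\to\xi$ weakly as $h\downarrow0$, by combining two operations: (i) replace $\xi_\omega$ by its average over a freshly re-sampled bridge on $[0,h]$, which annihilates the dependence of the post-$h$ kernel on $\omega|_{[0,h]}$ while keeping adaptedness from time $h$ onwards and not changing the embedded law; and (ii) relocate the mass that $\xi$ places on $[0,h]$ to times $\ge h$. Step (ii) under the fixed-marginal constraint is the main obstacle: simply shifting that mass to the deterministic time $h$, where the Brownian motion sits at $B_h$, replaces the embedded law by some $\nu^{h}$ with $\nu^{h}\succeq_{cx}\nu$, and such a discrepancy can never be removed by later stopping. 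Instead one must re-embed the early mass beyond time $h$ onto the correct targets, namely onto the sub-probability on $\R$ that $\xi$'s early mass already realizes; for $h$ small this sub-probability and the conditional law of $B_h$ are both concentrated near the origin and sit in the convex order that a Skorokhod-type embedding requires, which moreover consumes only $O(h)$ of expected additional time, so $\xi^{h}(\bt)<\infty$, and by continuity of Brownian paths $\xi^{h}\to\xi$ weakly. This also shows transparently where lower semicontinuity of $G$ enters in Theorem~\ref{th:MongeValue}: the relocation perturbs the stopping time, which is invisible to lower semicontinuous rewards but not to general ones, cf.\ Example~\ref{ex:counterexApprox}.

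Finally I assemble the pieces. Given $\xi$, $\eps$ and $f_1,\dots,f_n$, first replace each $f_i$ by a bounded continuous $f_i^{h}$ depending on $\omega$ only through $(\omega_s)_{s\ge h}$, e.g.\ by precomposing with the map that interpolates $\omega$ linearly on $[0,h]$ between $0$ and $\omega_h$; since $\sup_{s\le h}|B_s|\to0$ in probability under $\W$, the path-marginal of every randomized stopping time is $\W$, and $E[\tau]=\Var(\nu)$ for every $\tau\in\cT(\nu)$, a routine tightness argument gives $|\xi^{h}(f_i)-\xi^{h}(f_i^{h})|\to0$ and $|E[f_i(B,\tau)]-E[f_i^{h}(B,\tau)]|\to0$ uniformly over $\tau\in\cT(\nu)$. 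Applying Proposition~\ref{pr:RandomGenerator} to $\xi^{h}$ and the $f_i^{h}$ produces $\tau^{h}\in\cT(\nu)$ with $E[f_i^{h}(B,\tau^{h})]=\xi^{h}(f_i^{h})$, whence $E[f_i(B,\tau^{h})]\to\xi(f_i)$ as $h\downarrow0$ for each $i$. A selection $h=h(m)\to0$ exhausting a countable convergence-determining family of test functions then yields $\tau_m\in\cT(\nu)$ with $\xi^{\tau_m}\to\xi$ weakly, which is the claim.
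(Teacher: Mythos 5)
Your overall strategy follows the paper's: use the initial portion of the Brownian path as a randomization device, de-randomize a randomized stopping time by plugging an $\cF$-measurable uniform randomizer into the quantile transform $u\mapsto\inf\{t:\xi_\omega([0,t])\geq u\}$, and reduce to test functions that ignore the initial segment. Your variant of Proposition~\ref{pr:RandomGenerator}, with a deterministic time $h$ and the bridge $(B_s-\tfrac{s}{h}B_h)_{s\leq h}$ as randomizer, is legitimate in spirit (the bridge is independent of $(B_s)_{s\geq h}$ and has atomless law), and the final reduction of the test functions is a routine adaptation of Remark~\ref{rk:weakTopAdapted} and Lemma~\ref{le:EasyTesting}.

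The genuine gap is step (ii) of your construction of $\xi^{h}\in\cR(\nu)$. You propose to let the mass that $\xi$ stops on $[0,h)$ run on to time $h$ and then re-embed it ``onto the sub-probability that $\xi$'s early mass already realizes.'' This is impossible, for exactly the reason you state one sentence earlier: further stopping can only increase the embedded law in convex order. Conditionally on the early-stopping event, the law of $B_h$ is obtained from that early target by diffusing over the remaining time, hence it dominates the target strictly in convex order (unless the early mass is null), whereas a Skorokhod-type re-embedding after time $h$ would require the reverse inequality; smallness of $h$ does not repair the direction of the order. Concretely, take $\nu$ with an atom at $0$ and $\xi$ stopping mass $a$ at time $0$ as in Example~\ref{ex:counterexApprox}: at time $h$ that mass is $N(0,h)$-distributed and can never be stopped into $\delta_0$. (Your step (i) has a related, smaller problem: averaging the pre-$h$ part of the kernel over bridges with fixed endpoint $\omega_h$ destroys adaptedness before time $h$.) The theorem remains true in such cases, but only because one must give up the early/late decomposition of $\nu$ altogether: in Proposition~\ref{pr:RplusDense} the paper waits, after an excursion to $\pm\eta$, for the path to return spatially to $0$ and then reruns the \emph{entire} embedding $\xi$ from that moment, while on the small-probability event that the path instead reaches $\pm\sqrt{\eta}$ it embeds the whole of $\nu$ afresh from $\tfrac12(\delta_{-\sqrt{\eta}}+\delta_{\sqrt{\eta}})\leq_c\nu$ --- which is also where the hypothesis $\nu\neq\delta_0$ enters (the case $\nu=\delta_0$ being trivial and treated separately), an ingredient your construction never uses. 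To complete your proof you would need such a global restart, or some other global reallocation of which paths realize which parts of $\nu$, in place of the local re-embedding of the early mass.
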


Again, this should be compared with classical results on the convergence of (convex combinations of) stopping times to randomized ones, such as \cite{BaxterChacon.77, Dalang.84,ElKarouiLepeltierMillet.92, Ghoussoub.83}. Such approximations may not, in general, respect the constraint given by the marginal~$\nu$. In fact, in contrast to the unconstrained setting, the extreme points of $\cR(\nu)$ are not necessarily contained in $\cR_{\cT}(\nu)$ (a counterexample can be deduced from \cite[Example~6.19]{BeiglbockCoxHuesmann.14}).

\begin{proof}[Proof of Theorem~\ref{th:MongeValue}.]
  When $G$ is bounded, the claim is a direct consequence of Theorem~\ref{th:MongeDense} and the characterization of weak convergence in~\cite[Corollary~2.9 and Proposition~2.11]{JacodMemin.81}. (The main point of~\cite{JacodMemin.81} is that since we are dealing with measures that all have the same first marginal~$\W$, weak convergence implies convergence under bounded test functions that are continuous in~$t$ but merely measurable in~$\omega$.) The result for general $G$ now follows by monotone approximation.
\end{proof}

Our proof of Theorem~\ref{th:MongeDense}, presented in the remainder of this section, is constructive and gives direct insight why a singularity at the origin (as in Example~\ref{ex:counterexApprox}) is an obstruction. One result to be established as part of the proof is that for randomized stopping times $\xi$ and reward functions $G$ that do not depend on an initial segment of the paths (in a sense to be made precise), the expectation $\xi(G)$ can be exactly replicated by a stopping time $\tau\in\cT(\nu)$, without any need for approximation (cf.\ Proposition~\ref{pr:RandomGenerator} below).

\subsection{Proof of Theorem~\ref{th:MongeDense}}

Our first aim is to formalize and show that any $\xi\in\cR(\nu)$ can be approximated by randomized stopping times that do not stop right after time 0.
We denote by $|\cdot|$ the Euclidean norm in any dimension.

\begin{definition}
Let $\eta>0$ and $\tau_\eta = \inf\{t :\,|(t,\omega_{t})| \geq \eta\}$. We say that a randomized stopping time $\xi$ is \emph{bounded away from $0$ with lower bound $\eta>0$} if
\[\xi_\omega([\tau_\eta(\omega),\infty)) = 1 \quad \text{for almost all} \quad \omega \in C_0(\R_+).\]
The set of all such $\xi$ is denoted $\cR^\eta$. 
We also set $\cR^+=\cup_{\eta>0}\cR^\eta$; any $\xi\in\cR^+$ is called \emph{bounded away from $0$}. Finally, $\cR^\eta(\nu):=\cR^{\eta}\cap\cR(\nu)$ and $\cR^+(\nu):=\cR^+\cap\cR(\nu)$.
\end{definition}

The terminology of lower bound is convenient but slightly abusive: intuitively, $\xi\in\cR^{\eta}$ above means that $\xi$ happens after time~$\tau_{\eta}$ (but the time $\tau_{\eta}$ is not bounded away from zero in the usual sense). Some more notation will be useful.

\begin{definition}\label{de:gluedAndShift}
Let $\omega,\omega' \in C_0(\R_+)$, $t \in \R_+$, $\tau \in \cT$ and $\xi \in \cR$.

\begin{enumerate}
\item The path of $\omega$ and $\omega'$ \emph{glued at time $t$} is
\[(\omega \oplus_t \omega')(s) := \omega(s \wedge t) + \omega'(s - t \vee 0),\quad s\geq0.\]

\item The path of $\omega$ \emph{after time $t$} is
\[ \omega^{t\mapsto}(s) := \omega(t+s) - \omega(t),\quad s\geq0.\]

\item  The randomized stopping time \emph{$\xi$ shifted by $\tau$}, denoted $\tau \oplus \xi$, is defined by  its kernel
\[(\tau \oplus \xi)_\omega([0,t]) := \1_{\tau(\omega) < t}\xi_{\omega^{\tau(\omega)\mapsto}}([0,t - \tau(\omega)]),\quad t\geq0.\]
\end{enumerate}
\end{definition}

The definition in (iii) can be understood as the randomized stopping time $\xi$ applied to the Brownian motion started at $(\tau,B_{\tau})$. For instance, if $\tau=t_{0}$ is deterministic and $\xi=\xi^{\sigma}$ corresponds to a nonrandomized stopping time $\sigma>0$, then $\tau \oplus \xi$ corresponds to the stopping time $(t_{0}\oplus\sigma)(\omega) = t_{0} + \sigma(\omega^{t_{0}\mapsto}).$

\begin{lemma}\label{le:boundProps}
For $0 < \eta < 1$, define
\begin{align*}
\rho_\eta'(\omega) := \inf \{ t: |\omega_t| = \eta\}, \qquad \rho_\eta(\omega) := \inf\{t \geq \rho_\eta'(\omega) : |\omega_t| \in \{0,\sqrt{\eta}\}\}.
\end{align*}
For almost all $\omega \in C_0(\R_+)$, we have 
\begin{enumerate}
\item $\rho_\eta(\omega) \to 0$ as $\eta \to 0$,
\item $d((\omega^{\rho_\eta(\omega)\mapsto},t),(\omega,t)) \to 0$ as $\eta \to 0$, for all $t\geq 0$.\end{enumerate}
\end{lemma}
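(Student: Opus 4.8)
The statement is an almost-sure pathwise claim about Brownian motion: as $\eta \to 0$, the stopping time $\rho_\eta$ tends to $0$, and consequently the path shifted by $\rho_\eta$ converges to the original path in the metric $d$. Let me think about what's really going on.

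$\rho_\eta'(\omega)$ is the first hitting time of level $\pm\eta$; as $\eta \to 0$, $\rho_\eta' \to 0$ a.s. because Brownian motion starts at $0$ and immediately oscillates (it's not eventually of one sign near $0$). Then $\rho_\eta(\omega) \geq \rho_\eta'(\omega)$ is the first time after $\rho_\eta'$ that $|\omega|$ hits $0$ or $\sqrt\eta$. The key point is that starting from level $\eta$, hitting $0$ or $\sqrt\eta$ happens quickly: by continuity of paths and the fact that a Brownian motion started at $\eta$ will, with the time it takes, either return to $0$ or climb to $\sqrt\eta$ — and $\sqrt\eta \to 0$ while $\eta \to 0$, so the whole excursion stays near level $0$.

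So here's how I'd organize it. For (i): First show $\rho_\eta' \to 0$ a.s. This is standard — $\rho_\eta'$ is increasing as... no wait, decreasing? As $\eta$ decreases, the level gets closer to $0$, so $\rho_\eta'$ decreases; it's monotone in $\eta$, so $\rho_\eta' \downarrow L$ for some limit $L \geq 0$ as $\eta \downarrow 0$; but $\omega_{\rho_\eta'} = \pm\eta \to 0$ and by continuity $\omega_L = 0$; if $L > 0$ this would force $\omega$ to vanish at a positive time $L$ which is the limit of hitting times of $\pm\eta$ — one then argues (e.g. via the reflection principle / Blumenthal, or simply that $\{t : \omega_t = 0\}$ has empty interior and $0$ is not isolated in it, or that the first hitting time of $0$ after $0$ is $0$ itself a.s.) that $L = 0$ a.s. Then for $\rho_\eta$: on the event that $\rho_\eta'$ is small, consider $\sigma_\eta := \inf\{t \geq \rho_\eta' : \omega_t \in \{0, \sqrt\eta \cdot \operatorname{sgn}(\omega_{\rho_\eta'})\}\}$ — actually one should be careful with signs since $|\omega_t| = \sqrt\eta$ allows either sign. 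In any case $\rho_\eta$ is the first time after $\rho_\eta'$ that $|\omega|$ exits the open interval $(0, \sqrt\eta)$ in absolute value, i.e., $|\omega|$ first hits $0$ or $\sqrt\eta$. Since $|\omega_{\rho_\eta'}| = \eta \in (0, \sqrt\eta)$ for $\eta < 1$, this is a genuine first-exit time from a shrinking interval. I'd bound $\rho_\eta - \rho_\eta'$ by noting that a continuous path cannot stay in $[-\sqrt\eta, \sqrt\eta]$ forever without hitting $0$ (if it never returns to $0$ after $\rho_\eta'$ it is of constant sign and bounded by $\sqrt\eta$, hence by the a.s. unboundedness of Brownian oscillations... hmm, bounded-ness alone doesn't give the contradiction directly). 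Cleaner: use that $\liminf_{t} \omega_t = -\infty$ and $\limsup_t \omega_t = +\infty$ a.s., so $\omega$ is eventually $> \sqrt\eta$ and eventually $< -\sqrt\eta$; hence it must exit $[-\sqrt\eta,\sqrt\eta]$, so $\rho_\eta < \infty$. For the convergence $\rho_\eta \to 0$: fix $\delta > 0$; for $\eta$ small we have $\rho_\eta' < \delta$; and I claim $\rho_\eta < 2\delta$ eventually. Suppose not along a subsequence: then on $[\rho_\eta', 2\delta]$ the path keeps $|\omega| \in (0,\sqrt\eta)$, so $\sup_{[\rho_\eta', 2\delta]}|\omega| \leq \sqrt\eta \to 0$, forcing $\omega \equiv 0$ on $[0, 2\delta]$ in the limit (using $\rho_\eta' \to 0$ too), contradiction with a.s. nonconstancy of $\omega$ on $[\delta, 2\delta]$. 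This gives $\rho_\eta \to 0$ a.s.

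For (ii): Once $\rho_\eta(\omega) \to 0$, I want $d((\omega^{\rho_\eta \mapsto}, t), (\omega, t)) \to 0$ for each fixed $t$. By definition of $d$ (the second coordinate $t$ is the same in both, so the $|t-s|$ term vanishes) this reduces to $\sup_{u \geq 0} |\omega^{\rho_\eta \mapsto}_{u \wedge t} - \omega_{u \wedge t}| \to 0$, i.e. $\sup_{u \leq t}|\omega_{\rho_\eta + u} - \omega_{\rho_\eta} - \omega_u| \to 0$. Write this as $\sup_{u \leq t}|(\omega_{\rho_\eta + u} - \omega_u) - \omega_{\rho_\eta}|$. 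We have $|\omega_{\rho_\eta}| \leq \sqrt\eta \to 0$, so it suffices to show $\sup_{u \leq t}|\omega_{\rho_\eta + u} - \omega_u| \to 0$. Since $\rho_\eta \to 0$ and $\omega$ is uniformly continuous on the compact interval $[0, t+1]$ (and $\rho_\eta < 1$ eventually), this modulus-of-continuity estimate gives the conclusion: $\sup_{u \leq t}|\omega_{\rho_\eta+u} - \omega_u| \leq \operatorname{osc}(\omega; [0,t+1], \rho_\eta) \to 0$ by uniform continuity. This handles (ii) cleanly, deterministically, on the full-measure set where (i) holds.

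**Main obstacle.** The delicate part is (i), specifically controlling $\rho_\eta - \rho_\eta'$: one needs that after reaching level $\pm\eta$, the path returns to $0$ or reaches $\pm\sqrt\eta$ within a time that vanishes as $\eta \to 0$. The argument above ("if not, the path is trapped in a shrinking tube on $[\rho_\eta', 2\delta]$, forcing it to be identically $0$, contradiction") is the cleanest route, but it needs the right a.s. statement to contradict — that $\omega$ is a.s. not identically zero on any interval $[\delta, 2\delta]$ with $\delta > 0$ rational, which follows from $\omega_{2\delta} \neq \omega_\delta$ a.s. (Gaussian increment), taking a countable intersection over rational $\delta$. I should also double-check the monotonicity/measurability bookkeeping: $\rho_\eta'$ and $\rho_\eta$ are stopping times (first hitting times of closed sets by continuous adapted processes), and the limit $\eta \to 0$ can be taken along a countable sequence $\eta_n \downarrow 0$ by monotonicity of $\rho'_{\eta}$ in $\eta$ plus a squeeze, which suffices for the a.s. statements. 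I don't anticipate trouble beyond this; (ii) is a soft consequence of (i) plus uniform continuity of individual paths and the bound $|\omega_{\rho_\eta}| \leq \sqrt\eta$.
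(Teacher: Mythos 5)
Your proposal is correct and follows essentially the same route as the paper: part (i) is proved by the same ``trapped in a shrinking tube'' contradiction (if $\rho_{\eta_n}(\omega)\geq\eps$ along a sequence then $\sup_{s\leq\eps}|\omega_s|\leq\sqrt{\eta_n}\to0$, forcing $\omega$ to be initially constant, which happens only on a null set), and part (ii) is the same deterministic estimate combining $|\omega_{\rho_\eta}|\leq\sqrt{\eta}$ with uniform continuity of $\omega$ on compacts once $\rho_\eta\to0$. Your preliminary step establishing $\rho'_\eta\to0$ separately is harmless but unnecessary, since the tube argument alone covers both cases $\rho'_\eta\geq\eps$ and $\rho'_\eta<\eps$.
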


\begin{proof}
We show that (i), (ii) hold on the set $I$ of all paths $\omega\in C_0(\R_+)$ that are not initially constant; i.e., 
$\omega|_{[0,\eps]}\not\equiv0$ for all $\eps>0$. Notice that $\W(I)=1$.

(i) If $\omega\in C_0(\R_+)$ is such that $\rho_\eta(\omega)$ does not converge to $0$, we can
find $\eps > 0$ and a sequence $\eta_n \to 0$ such that $\rho_{\eta_n}(\omega) \geq \epsilon$ for all $n$. In particular, this implies that
$\sup_{s \leq \epsilon} |\omega(s)| \leq \sqrt{\eta_n}$ for all $n$ and therefore $\omega|_{[0,\eps]}\equiv0$; that is, $\omega\notin I$.

(ii) Fix $\omega \in I$ and $t>0$. Let $\eta > 0$ be small enough so that $\rho_\eta(\omega) < t$ and
consider some $0 < s < t$. For $s \leq \rho_\eta(\omega)$ we clearly have
\[|\omega^{\rho_\eta(\omega)\mapsto}(s) - \omega(s)| \leq \sup_{u \leq 2\rho_\eta(\omega)} 3 | \omega(u)|.\]
Whereas for $\rho_\eta(\omega) < s \leq t$ we have
\[|\omega^{\rho_\eta(\omega)\mapsto}(s) - \omega(s)| \leq \sqrt{\eta} + \sup_{s \leq t}|\omega(\rho_\eta(\omega)+s) - \omega(s)|.\]
Combining these two inequalities, we obtain that
\[ \sup_{s \leq t} |\omega^{\rho_\eta(\omega)\mapsto}(s) - \omega(s)| \leq \sup_{u \leq 2\rho_\eta(\omega)} 3 | \omega(u)| +\sqrt{\eta}+ \sup_{s \leq t}|\omega(\rho_\eta(\omega)+s) - \omega(s)|.\]
Since $\omega$ is uniformly continuous on compact intervals, the right-hand side tends to $0$ when $\rho_\eta(\omega)\to0$, and the latter holds by~(i) as $\eta\to0$.
\end{proof}

\begin{remark}\label{rk:weakTopAdapted}
  If $G\in C_{b}(C_0(\R_+) \times \R_+)$ and $\xi\in\cR$, then $\xi(G)=\xi(G(B_{\cdot\wedge\bt},\bt))$ by the adaptedness property of $\xi\in\cR$. As a result, the weak convergence on~$\cR$ is also induced by the subset of adapted functions $G$ which, in turn, can be identified with $C_{b}(S)$.
\end{remark}

We can now show that any $\xi\in\cR(\nu)$ can be approximated with embeddings that are bounded away from~0---except in the trivial case $\nu=\delta_{0}$ where the assertion clearly fails.

\begin{proposition}\label{pr:RplusDense}
Let $\nu \neq \delta_0$. Then $\cR^{+}(\nu)$ is weakly dense in $\cR(\nu)$.
\end{proposition}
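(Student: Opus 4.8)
The plan is to approximate a given $\xi\in\cR(\nu)$ by inserting a short, vanishing initial delay before running $\xi$, and to repair the resulting defect in the marginal by a small symmetric correction of the target --- a correction that is available precisely because $\nu\neq\delta_0$. Fix $\xi\in\cR(\nu)$. For small $\eta>0$ let $\rho_\eta$ be the stopping time of Lemma~\ref{le:boundProps}, so that $B_{\rho_\eta}\in\{0,\sqrt\eta,-\sqrt\eta\}$ and $\rho_\eta\to0$ almost surely as $\eta\to0$; a gambler's-ruin computation (using $\eta<\sqrt\eta$ for $0<\eta<1$) gives $\W(B_{\rho_\eta}=0)=1-\sqrt\eta$ and $\W(B_{\rho_\eta}=\sqrt\eta)=\W(B_{\rho_\eta}=-\sqrt\eta)=\tfrac12\sqrt\eta$. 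I then define $\xi^\eta$ by pasting at $\rho_\eta$: on $\{B_{\rho_\eta}=0\}$ let $\xi^\eta$ act as $\rho_\eta\oplus\xi$, and on $\{B_{\rho_\eta}=\pm\sqrt\eta\}$ let it act as $\rho_\eta\oplus\bar\xi^{\pm}$, where $\bar\xi^{\pm}$ is a classical Skorokhod embedding, for the Brownian motion restarted at time~$\rho_\eta$, of a probability measure to be chosen. Since the three events lie in $\cF_{\rho_\eta}$ and each $\rho_\eta\oplus(\cdot)$ is again a randomized stopping time, $\xi^\eta\in\cR$; moreover $\rho_\eta\ge\rho_\eta'\ge\tau_\eta$ on a full set (at time $\rho_\eta'$ one has $|\omega_{\rho_\eta'}|=\eta$, hence $|(\rho_\eta',\omega_{\rho_\eta'})|\ge\eta$), so $\xi^\eta\in\cR^\eta\subseteq\cR^+$ automatically.

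The measures embedded on the $\pm\sqrt\eta$--branches are essentially forced. Since $\xi^\eta(\bt)<\infty$ will make the post-$\rho_\eta$ stopping times uniformly integrable, those branches must embed probability measures $\mu^{\pm}_\eta$ of mean $\pm\sqrt\eta$; and then $\xi^\eta\circ B^{-1}=(1-\sqrt\eta)\nu+\tfrac12\sqrt\eta(\mu^+_\eta+\mu^-_\eta)$, so the marginal constraint $\xi^\eta\circ B^{-1}=\nu$ is equivalent to $\mu^+_\eta+\mu^-_\eta=2\nu$. I would take $\mu^{\pm}_\eta:=\nu\pm\lambda_\eta$ with $\lambda_\eta:=c_\eta\,\big(\sgn(\cdot)-m\big)\,d\nu$, where $m:=\int\sgn\,d\nu$ and $c_\eta:=\sqrt\eta\big/\!\int|x|\,\nu(dx)$, which makes $\int x\,d\lambda_\eta=\sqrt\eta$ as required. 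Here the hypothesis $\nu\neq\delta_0$ is decisive: it forces $\int|x|\,d\nu>0$, so $c_\eta$ is finite and $|\lambda_\eta|\le 2c_\eta\,\nu\le\nu$ for every $\eta\le\eta_0:=\tfrac14\big(\!\int|x|\,d\nu\big)^2>0$, whence $\mu^{\pm}_\eta$ are genuine probability measures with finite second moment; by contrast, when $\nu=\delta_0$ no two probabilities of nonzero mean average to $\delta_0$, which is consistent with the proposition failing there. Taking $\bar\xi^{\pm}$ to embed $\mu^{\pm}_\eta$ from the Brownian motion restarted at time $\rho_\eta$ (which has finite second moment, hence finite expected duration) completes the construction.

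It remains to verify $\xi^\eta\in\cR^\eta(\nu)$ and $\xi^\eta\to\xi$ weakly. Membership in $\cR^\eta$ and the marginal identity were noted above; $\xi^\eta(\bt)<\infty$ holds because $E[\rho_\eta]=\eta^{3/2}$, the $0$--branch contributes at most $\xi(\bt)$, and the $\pm$--branches contribute at most $\tfrac12\sqrt\eta\big(\!\int x^2\,d\mu^+_\eta+\int x^2\,d\mu^-_\eta\big)<\infty$. For weak convergence it suffices, by Remark~\ref{rk:weakTopAdapted}, to test against bounded continuous adapted $G$. The $\pm\sqrt\eta$--branches contribute at most $\|G\|_\infty\sqrt\eta\to0$. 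On the $0$--branch, $\xi^\eta$ acts as $\rho_\eta\oplus\xi$, so its contribution equals $E\big[\1_{\{B_{\rho_\eta}=0\}}\int G(\omega,\rho_\eta+s)\,\xi_{\omega^{\rho_\eta\mapsto}}(ds)\big]$; conditioning on $\cF_{\rho_\eta}$ and using the strong Markov property (so that $\omega^{\rho_\eta\mapsto}\sim\W$ independently of $\cF_{\rho_\eta}$, while on $\{B_{\rho_\eta}=0\}$ the path is the concatenation of $\omega|_{[0,\rho_\eta]}$ with $\omega^{\rho_\eta\mapsto}$), this contribution equals $E[\1_{\{B_{\rho_\eta}=0\}}F_\eta]$ where $F_\eta:=\int\!\!\int G\big((\omega|_{[0,\rho_\eta]})\oplus_{\rho_\eta}w,\rho_\eta+s\big)\,\xi_{w}(ds)\,\W(dw)$ is bounded by $\|G\|_\infty$. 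Since $\rho_\eta\to0$ a.s.\ (Lemma~\ref{le:boundProps}(i)), the glued path tends to $w$ locally uniformly and $\rho_\eta+s\to s$, so by continuity of $G$ and dominated convergence $F_\eta\to\int\!\!\int G(w,s)\,\xi_{w}(ds)\,\W(dw)=\xi(G)$ a.s.; combined with $\W(B_{\rho_\eta}\neq0)=\sqrt\eta\to0$, a further application of dominated convergence gives $E[\1_{\{B_{\rho_\eta}=0\}}F_\eta]\to\xi(G)$. Hence $\xi^\eta(G)\to\xi(G)$, and as $\xi\in\cR(\nu)$ was arbitrary this proves the density.

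The one genuinely delicate point is conceptual: one must recognize that the ``overshoot'' of $\rho_\eta$ away from $0$ is unavoidable (it is what keeps $E[\rho_\eta]$ finite) and therefore must be absorbed by a correction of the embedding target, and that $\nu\neq\delta_0$ is exactly the condition permitting such a correction with the correct mean constraints. The remaining work --- realizing $\xi^\eta$ rigorously as a randomized stopping time by pasting the kernels $\rho_\eta\oplus(\cdot)$ along the $\cF_{\rho_\eta}$--measurable partition, and organizing the weak-convergence estimate so that one integrates out the post-$\rho_\eta$ path \emph{before} letting $\eta\to0$ (the kernels $\omega\mapsto\xi_\omega$ need not be continuous, so the limit cannot be taken pointwise in $\omega$) --- is technical but routine.
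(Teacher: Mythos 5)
Your proof is correct, and its skeleton is the same as the paper's: you take the stopping time $\rho_\eta$ of Lemma~\ref{le:boundProps}, paste $\rho_\eta\oplus\xi$ on the event $\{B_{\rho_\eta}=0\}$ (which carries probability $1-\sqrt\eta$), repair the marginal on the overshoot events $\{B_{\rho_\eta}=\pm\sqrt\eta\}$ by auxiliary embeddings with finite expected duration, and then prove $\xi^\eta(G)\to\xi(G)$ for bounded continuous adapted $G$ by exactly the estimate the paper uses (bounding the overshoot branches by $\|G\|_\infty\sqrt\eta$ and applying dominated convergence to the glued paths on the $0$-branch, integrating out the post-$\rho_\eta$ path before sending $\eta\to0$). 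The only genuine difference is the repair device. The paper observes that $u_\nu(0)=\int|x|\,d\nu>0$ gives $\mu_\eta=\tfrac12(\delta_{-\sqrt\eta}+\delta_{\sqrt\eta})\leq_c\nu$ for small $\eta$, and then uses a \emph{single} stopping time $\chi_\eta$ embedding $\nu$ itself into Brownian motion started with law $\mu_\eta$, so each overshoot branch embeds $\nu$ and the marginal identity is immediate. You instead embed, from each starting level $\pm\sqrt\eta$ separately, the explicitly tilted targets $\mu^\pm_\eta=\nu\pm\lambda_\eta$ with $\lambda_\eta=c_\eta(\sgn(\cdot)-m)\,d\nu$, and recover $\nu$ by averaging $\mu^+_\eta+\mu^-_\eta=2\nu$; the smallness condition $2c_\eta\le1$ that makes $\mu^\pm_\eta$ nonnegative uses precisely the same quantity $\int|x|\,d\nu>0$ as the paper's potential-function argument, so the role of $\nu\neq\delta_0$ is identical in substance. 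Your route is slightly more hands-on (explicit targets, mean and second-moment checks) but avoids invoking the convex-order criterion; the paper's route is slightly slicker because a single embedding $\chi_\eta$ handles both branches and the marginal computation is automatic. The remaining informalities you flag (translating $\mu^\pm_\eta$ to the increment path when applying Definition~\ref{de:gluedAndShift}(iii), and the measurable pasting of kernels along the $\cF_{\rho_\eta}$-partition) are indeed routine and are treated with the same level of brevity in the paper's own proof.
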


\begin{proof}
As $\nu \neq \delta_0$, its potential function $x\mapsto u_\nu(x):= \int |x-y| \, {\nu}(dy)$
satisfies $u_\nu(0) > 0$, and then by continuity of $u_\nu$ we even have
$\min_{y \in [-\sqrt{\eta},\sqrt{\eta}]} u_{\nu}(y)\geq \sqrt{\eta}$ for $\eta > 0$ small enough. This shows that
\[\mu_\eta := \frac{1}{2}(\delta_{-\sqrt{\eta}} + \delta_{\sqrt{\eta}}) \leq_c \nu\]
where $\leq_c$ denotes the convex order.
 As a result, for $\eta > 0$ small enough we can embed $\nu$ in a Brownian motion with initial distribution $\mu_\eta$ by a stopping time $\chi_\eta$ with $E[\chi_\eta]<\infty$. (See~\cite{Obloj.04} for the relevant background.)

Given $\xi\in\cR(\nu)$, let $\rho_\eta$ be as in Lemma \ref{le:boundProps} and define a family of randomized stopping times $\xi^\eta \in \cR$ via their
disintegration,
\[ \xi^\eta_\omega := \1_{\{\omega(\rho_\eta(\omega)) = 0\}}(\rho_\eta \oplus \xi)_\omega + \1_{\{|\omega(\rho_\eta(\omega))| = \sqrt{\eta}\}}(\rho_\eta \oplus \chi_\eta)_\omega.\]
Then we have $\xi^{\eta}\in\cR^{\eta}(\nu)$ by construction.
Next, we show that $\xi^\eta \to \xi$ weakly as $\eta \to 0$.
Let $G : C_0(\R_+) \times \R_+ \to \R$ be bounded, continuous and adapted, then
\begin{align*}
\xi(G) &= \int_{C_0(\R_+)} \int_{\R_+} G(\omega,s) \xi_\omega(ds) \W(d\omega) \\ 
&= \int_{C_0(\R_+)}\left[\int_{C_0(\R_+)} \int_{\R_+} G(\omega,s) \xi_\omega(ds) \W(d\omega)\right] \W(d\omega').
\end{align*}
Using the stationarity and independence of Brownian increments, we can similarly write the expectation $\xi^\eta(G)$ as
\begin{align*}
 \int_{C_0(\R_+)}
\left[ \1_{\omega'(\rho_\eta(\omega')) = 0} \int_{C_0(\R_+)}\int_{\R_+} G(\omega' \oplus_{\rho_\eta(\omega')} \omega,s + \rho_\eta(\omega'))  \xi_\omega(ds)\W(d\omega) \right.\\
\left.+\1_{\omega'(\rho_\eta(\omega')) \neq 0} \int_{C_0(\R_+) \times \R_+} G(\omega' \oplus_{\rho_\eta(\omega')} \omega,s + \rho_\eta(\omega')) \chi_\eta(d\omega,ds)\right] \W(d\omega').
\end{align*}
Therefore,
\begin{align*}
&|\xi(G) - \xi^\eta(G)| \\
&\leq \int_{C_0(\R_+)}\left[\int_{C_0(\R_+)} \int_{\R_+} |G(\omega,s) -  G(\omega' \oplus_{\rho_\eta(\omega')} \omega,s + \rho_\eta(\omega')) | \times \right.\\
& \hspace{7em} \xi_\omega(ds) \W(d\omega) \Bigg]\W(d\omega')
+ 2\|G\|_{\infty} \W\{\omega':\,\omega'(\rho_\eta(\omega')) \neq 0\}.
\end{align*}
We note that $\W\{\omega':\,\omega'(\rho_\eta(\omega')) \neq 0\} = \sqrt{\eta}\to0$ as a consequence of the martingale property.

On the other hand, Lemma~\ref{le:boundProps} yields that $\rho_\eta(\omega') \to 0$ for all $\omega'$ outside a nullset, and for such $\omega'$ the second part of that lemma 
and the continuity of $G$ imply that
\[
  |G(\omega,s) -  G(\omega' \oplus_{\rho_\eta(\omega')} \omega,s + \rho_\eta(\omega')) | \to 0
\]
for all $\omega$. By bounded convergence, we conclude that $|\xi^\eta(G) - \xi(G)| \to 0$ for $\eta \to 0$ as desired. 
\end{proof}

For brevity, let us write $C_b$ for the set of bounded continuous functions on $C_0(\R_+) \times \R_+$. 
Next, we introduce the subset of test functions which are independent of the initial segment of the path, or more precisely, their value depends on the path $\omega$ before time $\tau_{\eta}$ only through the level $\omega_{\tau_{\eta}(\omega)}$ at that time.

\begin{definition}
Given $\eta>0$, we write $C_b^\eta$ for the set of all $G\in C_b$ with the property that if $ \omega, \omega'\in C_{0}(\R_{+})$ satisfy 
\[
  \tau_\eta(\omega)=\tau_\eta(\omega')=:t_0\mbox{ and }\omega|_{[t_0,\infty)} = \omega'|_{[t_0,\infty)}, \quad \mbox{then}\quad
  G(\omega, t)= G(\omega', t)
\]
for all $t\geq0$. We set $C_b^+:= \cup_{\eta>0} C_b^\eta$.
\end{definition}

This definition is chosen so that the monotonicity property
\begin{equation}\label{eq:CetaMonotone}
C_{b}^{\eta}\supseteq C_{b}^{\eta'}\quad\mbox{for}\quad\eta\leq\eta'
\end{equation}
holds. In particular, $C_b^+$ is the increasing limit of $C_b^\eta$ as $\eta\to0$.

\begin{lemma}\label{le:EasyTesting}
The weak topology on $\cR(\nu)$ induced by $C_{b}^{+}$ coincides with the usual weak topology.
\end{lemma}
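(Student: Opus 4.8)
The plan is to show that the two weak topologies on $\cR(\nu)$ agree. One inclusion is immediate: since $C_b^+ \subseteq C_b$, a net converging in the usual weak topology certainly converges when tested only against $C_b^+$, so the $C_b^+$-topology is coarser. For the reverse inclusion it suffices, by Proposition~\ref{pr:RplusDense} (in the nontrivial case $\nu \neq \delta_0$; the case $\nu = \delta_0$ is trivial since $\cR(\nu)$ is a singleton), to argue that on the dense subset $\cR^+(\nu)$, and in fact on each $\cR^\eta(\nu)$, testing against $C_b^+$ already determines convergence against all of $C_b$. The key point I would exploit is that for $\xi \in \cR^\eta$, the measure $\xi$ is supported on $\{(\omega,t) : t \geq \tau_\eta(\omega)\}$, so when integrating a function $G \in C_b$ against such a $\xi$, only the values of $G$ on that set matter; and there one can hope to replace $G$ by a function in $C_b^{\eta'}$ for a slightly smaller $\eta'$, at the cost of an error controlled by the modulus of continuity of $G$ and the (small) fluctuation of the Brownian path over the short interval $[\tau_{\eta'}, \tau_\eta]$.

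Concretely, here is how I would organize the argument. Fix $\eta > 0$ and $G \in C_b$; I want to approximate $G$ uniformly on the relevant set by elements of $C_b^+$. For $\eta' < \eta$ define a ``truncation'' or ``freezing'' operator: given a path, freeze its increments on $[0, \tau_{\eta'}]$ to a canonical path depending only on the endpoint $\omega_{\tau_{\eta'}}$ and on $\tau_{\eta'}$ — for instance, replace $\omega|_{[0,\tau_{\eta'}]}$ by the linear interpolation from $(0,0)$ to $(\tau_{\eta'}, \omega_{\tau_{\eta'}})$ — leaving the path after $\tau_{\eta'}$ (relative to its value at $\tau_{\eta'}$) and the time variable unchanged. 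Call the resulting map $\Phi_{\eta'}$ on $S$ (or on $C_0(\R_+) \times \R_+$) and set $G_{\eta'} := G \circ \Phi_{\eta'}$. One checks that $G_{\eta'}$ is bounded, continuous (the freezing map is continuous in the metric $d$ on $S$, using that $\tau_{\eta'}$ is continuous on the set $I$ of non-initially-constant paths, which has full measure and is all that the kernels see), adapted, and lies in $C_b^{\eta'} \subseteq C_b^+$, because its value depends on $\omega$ before time $\tau_{\eta'}$ only through $\omega_{\tau_{\eta'}}$ — wait, more carefully, it should depend on $\omega$ before $\tau_{\eta'}$ only through that level, which is exactly the defining property of $C_b^{\eta'}$. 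Then for any $\xi \in \cR^\eta(\nu)$, since $\xi$ is carried by $\{t \geq \tau_\eta \geq \tau_{\eta'}\}$,
\[
  |\xi(G) - \xi(G_{\eta'})| \leq \sup_{(\omega,t):\, t \geq \tau_\eta(\omega)} |G(\omega,t) - G(\Phi_{\eta'}(\omega,t))|,
\]
and this supremum tends to $0$ as $\eta' \to 0$: on the set $\{t \geq \tau_\eta\}$ the original and frozen paths agree after $\tau_{\eta'}$ and differ before it by at most $2\eta$ (both paths stay in $[-\sqrt{\eta'},\sqrt{\eta'}] \cdots$) — actually one must be slightly more careful since $G \in C_b$ is only uniformly continuous on compacts, not globally, so I would instead fix a compact set capturing most of the mass of a given convergent sequence and handle the tail by boundedness; alternatively restrict attention to the identification $C_b(S)$ of Remark~\ref{rk:weakTopAdapted} and use that $d(\Phi_{\eta'}(\omega,t),(\omega,t)) \to 0$ uniformly in a suitable sense.

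Granting this uniform approximation, the conclusion follows by a standard $3\eps$-argument: if $\xi^n \to \xi$ in the $C_b^+$-topology with all $\xi^n, \xi \in \cR^\eta(\nu)$, then for $G \in C_b$ and $\eta'$ small, $|\xi^n(G) - \xi(G)| \leq |\xi^n(G) - \xi^n(G_{\eta'})| + |\xi^n(G_{\eta'}) - \xi(G_{\eta'})| + |\xi(G_{\eta'}) - \xi(G)|$, where the outer terms are uniformly small in $n$ by the estimate above (uniform over $\cR^\eta(\nu)$) and the middle term vanishes as $n \to \infty$ since $G_{\eta'} \in C_b^+$. Hence $\xi^n \to \xi$ in the usual weak topology. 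Combined with the density of $\cup_{\eta>0}\cR^\eta(\nu) = \cR^+(\nu)$ in $\cR(\nu)$ (Proposition~\ref{pr:RplusDense}) and the fact that both topologies are metrizable, one upgrades this to the equality of the two topologies on all of $\cR(\nu)$. The main obstacle I anticipate is the bookkeeping around continuity and uniformity: verifying that the freezing map $\Phi_{\eta'}$ is genuinely continuous for the metric $d$ (the discontinuity of $\tau_{\eta'}$ on initially-constant paths is harmless $\W$-a.s., but one must ensure the constructed $G_{\eta'}$ is honestly in $C_b$), and upgrading the ``uniform continuity on compacts'' of a general $G \in C_b$ to the uniform-over-$\cR^\eta(\nu)$ estimate needed for the $3\eps$-argument — this likely requires either passing through a tightness/compactness argument for the specific convergent sequence at hand or exploiting that it is enough to test convergence against a convergence-determining subclass of $C_b$ consisting of uniformly continuous functions.
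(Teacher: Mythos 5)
Your first half is in the right spirit: the paper also approximates a given $G\in C_b$ by elements of $C_b^{+}$ built from the initial hitting time $\tau_{\eta}$ (it uses the shifted path, $G_{k}(\omega,t):=G(\omega^{\tau_{1/k}(\omega)\mapsto},t)\in C_{b}^{1/k}$, rather than your linear-interpolation freezing). But the paper only needs the soft, pointwise statement that $\xi(G_{k})\to\xi(G)$ for each fixed $\xi\in\cR(\nu)$ (bounded convergence), because the conclusion is then obtained by a purely topological argument: this approximation shows that $C_{b}^{+}$ separates the points of $\cR(\nu)$, so the induced topology $\mathcal{T}^{+}$ is Hausdorff; since $\mathcal{T}^{+}\subseteq\mathcal{T}$ and $(\cR(\nu),\mathcal{T})$ is compact, the identity from $(\cR(\nu),\mathcal{T})$ to $(\cR(\nu),\mathcal{T}^{+})$ is a continuous bijection from a compact space onto a Hausdorff space, hence a homeomorphism. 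Your proposal replaces this with a quantitative scheme, and there the gaps are genuine.

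First, the uniform estimate $\sup_{t\geq\tau_{\eta}}|G-G\circ\Phi_{\eta'}|\to 0$ fails for general $G\in C_{b}$: even though $\Phi_{\eta'}$ moves points by at most order $\sqrt{\eta'}$ in the metric $d$, $G$ is only uniformly continuous on compacts and the set $\{t\geq\tau_{\eta}\}$ is not compact. You notice this and gesture at tightness, but the fix is not carried out---and tightness of $\cR(\nu)$ is exactly the compactness ingredient that makes the paper's two-line argument work, at which point your machinery is unnecessary. Second, and more seriously, your $3\eps$-argument only treats sequences $\xi^{n}\to\xi$ lying in a single $\cR^{\eta}(\nu)$, whereas a general $\mathcal{T}^{+}$-convergent sequence in $\cR(\nu)$ (and its limit) need not belong to any $\cR^{\eta}(\nu)$. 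The proposed upgrade via Proposition~\ref{pr:RplusDense} and ``metrizability of both topologies'' does not close this: Proposition~\ref{pr:RplusDense} gives density of $\cR^{+}(\nu)$ in the \emph{usual} (finer) topology, which is the wrong direction when analyzing $\mathcal{T}^{+}$-convergent sequences; agreement of two topologies on a dense subset does not imply agreement on the whole space; and metrizability of $\mathcal{T}^{+}$ is not available a priori (it only follows once the lemma is proved). A smaller point you flagged yourself: $G\circ\Phi_{\eta'}$ must be continuous everywhere to lie in $C_{b}^{\eta'}$, and $\tau_{\eta'}$ is discontinuous at some (admittedly $\W$-null sets of) paths, so ``harmless a.s.'' is not by itself a justification of membership in $C_{b}^{\eta'}$. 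In short, the missing idea is the compactness--Hausdorff argument (or some substitute giving uniform control over all of $\cR(\nu)$); without it, the route you sketch does not go through as stated.
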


\begin{proof}
Let $G\in C_b$; then $G_{k}(\omega,t):=G(\omega^{\tau_{1/k}(\omega)\mapsto},t)$ defines a function belonging to $C_{b}^{1/k}\subseteq C_{b}^{+}$. We have $d((\omega^{\tau_{1/k}(\omega)\mapsto},t),(\omega,t))\to0$ similarly as in Lemma~\ref{le:boundProps} and as $(G_{k})$ is uniformly bounded this implies that
 $\xi(G_{k})\to \xi(G)$ for all $\xi\in\cR(\nu)$. As a result, $C_{b}^{+}$ separates the points of $\cR(\nu)$; i.e., the induced weak topology $\mathcal{T}^{+}$ is Hausdorff. Since $\cR(\nu)$ is compact in the usual weak topology~$\mathcal{T}$ (see for instance \cite[Theorem~3.14]{BeiglbockCoxHuesmann.14}) and $\mathcal{T}^{+}\subseteq\mathcal{T}$, this already implies the result. Indeed, the identity map from $\mathcal{T}$ to $\mathcal{T}^{+}$ is continuous and maps compacts to compacts, hence is a homeomorphism.
\end{proof}

A key insight is that for $G\in C_b^+$, the expectation under a randomized stopping time that is suitably bounded away from zero is (exactly) replicated by a non-randomized stopping time. In particular, \emph{the necessity for approximation when $G\in C_b$ can be attributed to the initial portion of the paths.}

\begin{proposition}\label{pr:RandomGenerator} 
  Let $\eta>0$ and $\xi\in \cR^\eta(\nu)$. Then there exists a non-randomized stopping time $\bar\xi\in \cR^\eta(\nu)$ such that $\bar\xi(G)= \xi(G)$ for all $G\in C_b^\eta$.
\end{proposition}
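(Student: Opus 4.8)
The plan is to use the initial segment of the Brownian path (up to time $\tau_\eta$) as a source of randomness that drives the kernel $\xi_\omega$, thereby converting randomization into a genuine stopping rule. Fix $\xi\in\cR^\eta(\nu)$. Since $\xi$ is bounded away from $0$ with lower bound $\eta$, we have $\xi_\omega([\tau_\eta(\omega),\infty))=1$ for a.e.\ $\omega$, so the remaining stopping happens strictly after $\tau_\eta$. The key structural observation is that, conditionally on $\cF_{\tau_\eta}$, the path $\omega^{\tau_\eta(\omega)\mapsto}$ after time $\tau_\eta$ is a Brownian motion independent of $\cF_{\tau_\eta}$; and conditionally on $\cF_{\tau_\eta}$ the segment $\omega|_{[0,\tau_\eta]}$ still carries nontrivial randomness (e.g.\ the precise value of $\tau_\eta$, or the fine oscillation of the path before it hits the circle $|(t,\omega_t)|=\eta$). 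Since $\xi\in\cR^\eta(\nu)$, the marginal $\xi\circ B^{-1}=\nu$ is unaffected by what the path does before $\tau_\eta$, and—crucially—for $G\in C_b^\eta$ the value $G(\omega,t)$ depends on $\omega|_{[0,\tau_\eta]}$ only through $\omega_{\tau_\eta(\omega)}$. Hence we are free to reshuffle the pre-$\tau_\eta$ randomness to build a deterministic stopping rule without changing either the constraint or $\xi(G)$.

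The key steps, in order, are as follows. First, I would build an auxiliary ``randomization variable'': using the pre-$\tau_\eta$ path I would construct an $\cF_{\tau_\eta}$-measurable random variable $U$ which is uniformly distributed on $[0,1]$ and independent of $(\tau_\eta, B_{\tau_\eta})$ and of the post-$\tau_\eta$ Brownian motion. (Such a $U$ exists because, conditionally on $\cF_{\tau_\eta}$, the stopped path $\omega|_{[0,\tau_\eta]}$ still contains a diffuse component—for instance one can extract it from the conditional law of $\tau_\eta$ given $B_{\tau_\eta}$, or from the excursion structure before hitting the boundary circle. Concretely one can take $U = F_{B_{\tau_\eta}}(\tau_\eta^-)$ or a similar quantile transform, possibly mixing in a further diffuse functional of the pre-$\tau_\eta$ path to guarantee independence.) Second, I would encode the kernel $\xi_\bullet$ as a measurable function of $(U,$ post-$\tau_\eta$ path$)$: since $\xi_\omega$ is a probability kernel on $\R_+$ depending measurably on $\omega$, and—again by $G\in C_b^\eta$ insensitivity, but actually we need it for the \emph{constraint}, which is genuinely insensitive—one can disintegrate $\xi_\omega$ with respect to the post-$\tau_\eta$ path and realize the random stopping time $\tau_\eta + S$, where $S=S(\omega^{\tau_\eta\mapsto},\xi_\omega)$, via the standard ``quantile coupling'': $S = Q_\omega(U)$ with $Q_\omega$ the generalized inverse of $t\mapsto \xi_\omega^{\,\mathrm{shifted}}([0,t])$. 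Because $U$ is $\cF_{\tau_\eta}$-measurable and the inversion is adapted in the post-$\tau_\eta$ filtration, the resulting $\bar\tau := \tau_\eta + Q(U,\,\cdot\,)$ is a genuine $\F$-stopping time. Third, I would verify $\bar\xi:=\xi^{\bar\tau}\in\cR^\eta(\nu)$: finiteness of $\bar\xi(\bt)=E[\bar\tau]$ follows since the construction preserves the conditional law of the stopping time given the post-$\tau_\eta$ path, hence $E[\bar\tau]=E_\xi[\bt]<\infty$; the embedding $B_{\bar\tau}\sim\nu$ follows because $B_{\bar\tau}=B_{\tau_\eta}^{\,\mathrm{after}}$ stopped at $S$ has, given $\cF_{\tau_\eta}$, the same law as under $\xi$ (the pre-$\tau_\eta$ reshuffling does not touch the post-$\tau_\eta$ dynamics); and $\bar\xi\in\cR^\eta$ is immediate since $\bar\tau\geq\tau_\eta$. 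Finally, for $G\in C_b^\eta$ I would check $\bar\xi(G)=\xi(G)$: write $\xi(G)=E\big[\int G(\omega,\tau_\eta(\omega)+s)\,\xi^{\mathrm{shifted}}_\omega(ds)\big]$; the $C_b^\eta$ property lets me replace $\omega$ before $\tau_\eta$ by anything with the same endpoint, so $G(\omega,\tau_\eta+s)$ is a function of $(B_{\tau_\eta},\omega^{\tau_\eta\mapsto},s)$ only, and integrating out $U$ against the quantile representation of $S$ exactly recovers the integral against $\xi^{\mathrm{shifted}}_\omega(ds)$—by construction of the quantile coupling. Taking expectations gives the claim.

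The main obstacle I expect is the first step: rigorously extracting, in an $\cF_{\tau_\eta}$-measurable way, a uniform randomization variable $U$ that is independent of $(\tau_\eta,B_{\tau_\eta})$ and of the future Brownian motion, and doing so with enough measurability that the composite $\bar\tau$ is a bona fide stopping time. One has to be careful that $\tau_\eta$ itself can be atomic given $B_{\tau_\eta}$ in degenerate situations (e.g.\ if the path goes straight out), so the diffuse component must be located correctly—this is presumably why the authors chose the particular geometry $\tau_\eta=\inf\{t:|(t,\omega_t)|\geq\eta\}$, with the time coordinate included, ensuring $\tau_\eta\leq\eta$ and that the hitting location on the boundary circle is genuinely diffuse. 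A clean way around the technicalities is to invoke a measurable version of the fact that any atomless-on-a-full-set standard filtration at a stopping time supports a uniform random variable independent of the stopped $\sigma$-field's ``endpoint data'', or to simply enlarge by noting that $(\cF_{\tau_\eta},\W)$ restricted to the event of interest is atomless and apply a Borel-isomorphism argument to produce $U$; then check independence via the strong Markov property at $\tau_\eta$. Once $U$ is in hand, the rest is the routine quantile-coupling bookkeeping sketched above.
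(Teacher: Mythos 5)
Your overall strategy---use the pre-$\tau_\eta$ portion of the path as an internal randomization device and invert the kernel's c.d.f.\ against it---is indeed the paper's strategy, and the step you flag as the main obstacle (producing an $\cF_{\tau_\eta}$-measurable $U$, uniform and, conditionally on the level $B_{\tau_\eta}$, independent of the post-$\tau_\eta$ Brownian motion) is exactly what the paper supplies via the atomlessness of the conditional laws $\W_h$ and a conditional quantile transform; that part is fine. The genuine gap is in your second and third steps: you apply the quantile transform to the \emph{original} kernel, $S=Q_\omega(U)$ with $Q_\omega$ the generalized inverse of $t\mapsto\xi_\omega^{\mathrm{shifted}}([0,t])$. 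But $\xi_\omega$ may depend on the pre-$\tau_\eta$ segment of $\omega$ in an essential way (adaptedness only forbids dependence on the future, not on the initial segment), and $U$ is itself a functional of that segment. Independence of $U$ from $(\tau_\eta,B_{\tau_\eta})$ and from the post-$\tau_\eta$ path does not make $U$ independent of $Q_\omega$; worse, if the kernel's weights are, say, a uniform functional of the initial segment generating all of $\cF_{\tau_\eta}$ (mod null sets, given the level), then \emph{no} nonconstant $\cF_{\tau_\eta}$-measurable $U$ is independent of them, so the problem cannot be fixed by choosing $U$ more cleverly. Concretely, take $\xi_\omega^{\mathrm{shifted}}=p(V(\omega))\,\delta_1+(1-p(V(\omega)))\,\delta_2$ with $V$ a fair-coin functional of the initial segment, $p(0)=\tfrac34$, $p(1)=\tfrac14$; if $V=\1_{\{1/4<U\le 3/4\}}$ then $\W\{Q_\omega(U)=1\}=\tfrac14$ instead of the correct $\tfrac12$, so the law of the stopping time and hence the embedded marginal change: your $\bar\xi$ need not lie in $\cR(\nu)$, and neither $E[\bar\tau]=\xi(\bt)$ nor the assertion that, given $\cF_{\tau_\eta}$, the stopped law is unchanged can be justified (conditionally on $\cF_{\tau_\eta}$, $U$ is a constant, so your stopping rule is a deterministic function of the post-$\tau_\eta$ path, while under $\xi$ it is genuinely randomized).

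What is missing is the paper's first and decisive step: before any inversion, replace the kernel by its average over the conditional law of the initial segment given the level, $\bar\xi_{f\oplus\omega}:=\int_{C_{h(f)}}\xi_{g\oplus\omega}\,\W_{h(f)}(dg)$. One checks directly that $\bar\xi\in\cR^\eta(\nu)$ and $\bar\xi(G)=\xi(G)$ for all $G\in C_b^\eta$ (this swap of $f$ and $g$ inside the integral is precisely where the $C_b^\eta$-insensitivity is used, and the marginal and $\bt$-integrability are preserved by testing with $\phi(B_{\bt\vee\tau_\eta})$ and $\bt\wedge n$); equivalently, $\bar\xi^{\mathrm{shifted}}$ is the conditional law of the stopping time given $(B_{\tau_\eta},\omega^{\tau_\eta\mapsto})$, which is the only object your target quantities see. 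Since $\bar\xi$'s kernel depends on the initial segment only through $B_{\tau_\eta}$, it no longer consumes the pre-$\tau_\eta$ randomness, and then your quantile construction (in the paper: the representation $\rho(\omega,u)$ on the enlarged space together with the map $\phi_h$ sending $\W_h$ to Lebesgue measure) goes through and yields a genuine stopping time with law $\bar\xi$. With this averaging step inserted, and the verification of $\bar\xi\in\cR^\eta(\nu)$, $\bar\xi(G)=\xi(G)$ carried out as above, your argument becomes the paper's proof.
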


Before stating the proof, let us show how to combine the above results to conclude Theorem~\ref{th:MongeDense}.

\begin{proof}[Proof of Theorem~\ref{th:MongeDense}]
The case $\nu=\delta_{0}$ is trivial; we assume that $\nu\neq\delta_{0}$. Fix $\xi\in \cR(\nu)$, then by Proposition~\ref{pr:RplusDense} we can find $\xi^{n}\in \cR^{1/n}(\nu)$ such that $\xi^{n}\to \xi$. Next, we use Proposition \ref{pr:RandomGenerator} to find corresponding non-randomized stopping times $\bar\xi^{n}\in \cR^{1/n}(\nu)$. Let $G\in C_{b}^{+}$, then $G\in C_{b}^{\eta}$ for some $\eta>0$ and for all $n\geq 1/\eta$ we have by~\eqref{eq:CetaMonotone} that $\bar\xi^{n}(G)=\xi^{n}(G)$. In particular, Lemma~\ref{le:EasyTesting} implies that $(\bar\xi^{n})$ and $(\xi^{n})$ have the same weak limit; that is,  $\bar\xi^{n}\to \xi$.
\end{proof}

\subsection{Proof of Proposition~\ref{pr:RandomGenerator}}

The basic idea for this proof is to use the initial segments of the paths as a randomization device which, since we are only testing with $G\in C_b^\eta$, does not affect the evaluation of $G$. 
We first state two auxiliary results.

Let $\lambda$ be the Lebesgue measure on $[0,1]$. We consider the product space $\bar C_{0}(\R_{+}):=C_{0}(\R_{+})\times[0,1]$ equipped with the product $\sigma$-field $\bar\cF=\cF\otimes\cB([0,1])$, the product measure $\bar\W=\W\otimes\lambda$ and the product filtration $\bar\F$. Let $\bar B$ be the process defined by $t\mapsto\bar B_t(\omega,u)=\omega_t$ for $(\omega,u)\in\bar C_{0}(\R_{+})$; note that $\bar B$ is a Brownian motion under $\bar\W$. 
The following is well known (see for instance the proof of \cite[Theorem~3.8]{BeiglbockCoxHuesmann.14} and the subsequent lemma).

\begin{lemma}\label{le:rho}
  Let $\xi\in \cR$ have disintegration $\xi=\W(d\omega)\xi_{\omega}(ds)$. There exists an $\bar\F$-stopping time $\rho$ such that
  \begin{equation}
    \label{eq:rhodefn}
     \rho(\omega,u) =\inf \{ t \ge 0 : \xi_\omega([0,t]) \ge u\}\quad \mbox{for a.e.} \quad (\omega,u)\in \bar C_{0}(\R_{+})
  \end{equation}
  and $\bar\W\circ (\bar B,\rho)^{-1}=\xi$; that is, $E^{\bar\W}[G(\bar B,\rho)]=\xi(G)$ for every $G\in C_{b}$.
\end{lemma}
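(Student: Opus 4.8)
The plan is to take $\rho$ to be literally the right-continuous quantile transform written in \eqref{eq:rhodefn} and to deduce both assertions from the elementary fact that, for a nondecreasing right-continuous function $F\colon\R_{+}\to[0,1]$ with generalized inverse $F^{-1}(u):=\inf\{t\ge0:F(t)\ge u\}$, one has $F^{-1}(u)\le t\iff F(t)\ge u$ for every $t\ge0$ and every $u$.

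First I would fix, as provided by Definition~\ref{de:RST}, a $\W$-conull set $\Omega_{0}$ of paths for which $\xi_{\omega}$ is a probability measure on $\R_{+}$, write $F_{\omega}(t):=\xi_{\omega}([0,t])$, and set $\rho(\omega,u):=F_{\omega}^{-1}(u)$ for $(\omega,u)\in\Omega_{0}\times[0,1]$ (and, say, $\rho:=0$ elsewhere); this is exactly \eqref{eq:rhodefn}, valid $\bar\W$-a.e. Since $F_{\omega}$ is nondecreasing, right-continuous and increases to $1$, we get $\rho(\omega,u)<\infty$ whenever $u<1$, hence $\rho<\infty$ $\bar\W$-a.s. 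For the stopping-time property, the displayed equivalence gives, on $\Omega_{0}\times[0,1]$,
\[
  \{\rho\le t\}=\{(\omega,u):\xi_{\omega}([0,t])\ge u\},
\]
and the right-hand side lies in $\cF_{t}\otimes\cB([0,1])=\bar\cF_{t}$ precisely because $\omega\mapsto\xi_{\omega}([0,t])$ is $\cF_{t}$-measurable, which is the adaptedness clause in Definition~\ref{de:RST}. Hence $\rho$, modified on a $\bar\W$-null set if need be, is an $\bar\F$-stopping time (this is the argument in \cite[proof of Theorem~3.8]{BeiglbockCoxHuesmann.14}).

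It remains to identify the law of $(\bar B,\rho)$ under $\bar\W$. Since $\bar B(\omega,u)=\omega$, the map $(\omega,u)\mapsto(\bar B(\omega,u),\rho(\omega,u))=(\omega,\rho(\omega,u))$ leaves the first coordinate untouched, so $\bar\W\circ(\bar B,\rho)^{-1}$ has first marginal $\W$ and disintegrates as $\W(d\omega)$ against the image of $\lambda$ under $u\mapsto\rho(\omega,u)$. For $\omega\in\Omega_{0}$ the displayed equivalence gives $\{u\in[0,1]:\rho(\omega,u)\le t\}=[0,\xi_{\omega}([0,t])]$, of Lebesgue measure $\xi_{\omega}([0,t])=F_{\omega}(t)$; thus the image of $\lambda$ under $\rho(\omega,\cdot)$ has distribution function $F_{\omega}$ and therefore equals $\xi_{\omega}$. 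Consequently $\bar\W\circ(\bar B,\rho)^{-1}=\W(d\omega)\,\xi_{\omega}(ds)=\xi$, which is the same as $E^{\bar\W}[G(\bar B,\rho)]=\xi(G)$ for every bounded measurable $G$, in particular for $G\in C_{b}$, since $C_{b}$ is measure-determining on the Polish space $C_{0}(\R_{+})\times\R_{+}$.

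The statement is classical (inverse-transform sampling), so the only genuinely delicate point is the one flagged above: the stopping-time property relies on the right-continuity of $t\mapsto\xi_{\omega}([0,t])$ — so that $\{\rho\le t\}$, rather than merely $\{\rho<t\}$, has the clean form displayed — together with the raw-$\cF_{t}$-measurability built into the definition of a randomized stopping time; and one must remember to allow a $\bar\W$-null modification, since $\xi_{\omega}$ is only a probability measure for $\W$-a.e.\ $\omega$. Everything else is routine bookkeeping via Fubini and the quantile transform.
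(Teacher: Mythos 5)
Your proof is correct and is exactly the standard quantile-transform argument that the paper itself relies on (it gives no proof, simply citing the proof of Theorem~3.8 in Beiglb\"ock--Cox--Huesmann and the subsequent lemma). One cosmetic refinement: rather than appealing to a null-set modification afterwards (which for the raw filtration is not automatically harmless), it is cleaner to define $\rho(\omega,u)=\inf\{t\ge0:\xi_\omega([0,t])\ge u\}$ for \emph{all} $(\omega,u)$ using the everywhere-defined disintegration kernel, so that $\{\rho\le t\}=\{(\omega,u):\xi_\omega([0,t])\ge u\}\in\cF_t\otimes\cB([0,1])$ holds exactly and $\rho$ is an $\bar\F$-stopping time outright, with \eqref{eq:rhodefn} and finiteness then holding $\bar\W$-a.e.\ as required.
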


The second auxiliary result concerns the ``internal'' randomization device that will be used in lieu of the external randomization $([0,1],\lambda)$ as in the preceding lemma. This is somewhat involved because the randomization needs to be implemented conditionally on the level $B_{\tau^\eta}(\omega)$---indeed, $G\in C_{b}^{\eta}$ is independent of how a path reached this level at time $\tau^\eta(\omega)$, but of course not of the level itself.

Fix $\eta\geq0$. We note that the level $h(\omega):=B_{\tau^\eta}(\omega)$ of a path $\omega$ at the (almost surely finite) time $\tau_\eta$  satisfies $h(\omega)\in (-\sqrt{\eta}, \sqrt{\eta})$. Moreover, 
\[
\tau_\eta(\omega)=\sqrt{\eta^{2}-h(\omega)^{2}}
\]
depends on $\omega$ only through $h=h(\omega)$. Given $h\in\R$,  we introduce the set
\[C_h := \{ \omega \in C_0(\R_+) :\, \tau_\eta(\omega)<\infty,\;B_{\tau^\eta}(\omega) = h \};\]
we think of $C_{h}$ as a set of initial segments of paths (since the path after $\tau_\eta$ will not be used).
Given $f \in C_h$ and $\omega \in C_0(\R_+)$, we set
\[f \oplus \omega:=f \oplus_{\tau_{\eta}(f)} \omega\]
for brevity.
We also denote by $\W_h$ the conditional law of $B$ given $B_{\tau^\eta}=h$. That is, $\W_{h}$ is a stochastic kernel on $(-\sqrt{\eta},\sqrt{\eta})\times C_{0}(\R_{+})$ such that 
\[
  \W[A | B_{\tau^\eta} = h]=\W_h(A) = \W_{h}(A\cap C_{h})
\]
for $h\in (-\sqrt{\eta},\sqrt{\eta})$ and $A\in\cB(C_{0}(\R_{+}))$. In particular,
\[
\W(A) = \int_{\R} \W_h(A)\,\mu(dh),\quad \mu:=\Law(B_{\tau^\eta}).
\]

\begin{lemma}\label{le:WhAtomless}
  The measure $\W_{h}$ is atomless for all $h\in(-\sqrt{\eta},\sqrt{\eta})$.
\end{lemma}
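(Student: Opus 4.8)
The plan is to show that the conditional law $\W_h$ of Brownian motion given $B_{\tau^\eta} = h$ assigns zero mass to every singleton $\{f\}$ with $f \in C_h$. The key observation is that $\tau_\eta$ is \emph{not} measurable with respect to $\mathcal{F}_{t}$ for any fixed deterministic $t$; it is a genuine hitting time of the moving boundary $|(t,\omega_t)| = \eta$, and crucially $\tau_\eta(\omega) = \sqrt{\eta^2 - h(\omega)^2} > 0$ is strictly positive whenever $|h| < \sqrt\eta$, with the path being killed (for the purpose of $C_h$) exactly at that deterministic time once $h$ is fixed. So on $C_h$ we are looking at Brownian paths on the fixed time interval $[0, t_h]$, $t_h := \sqrt{\eta^2 - h^2}$, conditioned to first hit the space-time parabola at the endpoint with $B_{t_h} = h$. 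The law of such paths is absolutely continuous in a suitable sense with respect to a Brownian bridge (or can be compared to one), and Brownian bridge measure on a path space over a nondegenerate time interval is atomless.

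The cleanest route avoids computing the conditional law explicitly. First I would fix $h \in (-\sqrt\eta, \sqrt\eta)$, write $t_h = \sqrt{\eta^2 - h^2} > 0$, and note that a path $\omega \in C_h$ is determined by its restriction to $[0, t_h]$ (the rest is irrelevant for $C_h$, or we may take it constant). Second, I would argue that for any Borel set $A \subseteq C_0(\R_+)$ with $A \cap C_h = \{f\}$ a singleton, we have $\W_h(A) = 0$. To see this, suppose for contradiction that $\W_h(\{f\}) = c > 0$ for some $f \in C_h$. Then by the disintegration $\W(\cdot) = \int \W_h(\cdot)\,\mu(dh)$ and the fact that $\mu = \Law(B_{\tau^\eta})$ is a nonatomic measure on $(-\sqrt\eta,\sqrt\eta)$ (indeed $B_{\tau^\eta}$ has a density — it is a fixed-time-ish marginal of a diffusion reaching the parabola, and one checks it has no atoms; alternatively for each $h$ one directly works with $\W_h$), the difficulty is that integrating over $h$ against a nonatomic $\mu$ already kills point masses unless $\W_h$ has an atom for $h$ in a positive-$\mu$-measure set.

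So the real work is to rule out atoms of $\W_h$ for $\mu$-a.e.\ $h$ directly. For this, the simplest argument: condition further. Write $B$ on $[0, t_h]$ and let $\sigma := \tau_\eta \wedge t_h$. On $C_h$, conditionally on $B_{\tau^\eta} = h$ we have $\tau_\eta = t_h$, meaning the path stays strictly inside the open region $\{|(s,\omega_s)| < \eta\}$ for $s < t_h$. Now pick any intermediate time $r \in (0, t_h)$; conditionally on $\mathcal{F}_r$ restricted to $C_h$-paths, the increment $(B_{r+u} - B_r)_{u \le t_h - r}$ is a Brownian motion conditioned on an event of positive probability (reach the parabola precisely at $t_h$ at height $h$, having not yet hit it by time $r$), hence its law is absolutely continuous with respect to Wiener measure on $C_0([0, t_h - r])$, which is atomless. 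Therefore the conditional law of $\omega|_{[0,t_h]}$ given $\mathcal{F}_r$ is atomless on a set of full measure, and hence $\W_h$ itself is atomless. I expect the main obstacle to be the bookkeeping of the conditioning: making precise that $\W_h$-a.s.\ the path does not hit the parabola before $t_h$, and that the regular conditional distribution of the post-$r$ increment given $\mathcal{F}_r$ is (for $\W_h$-a.e.\ realization) a probability measure absolutely continuous with respect to, hence dominated by, Wiener measure — so that atomlessness of Wiener measure transfers up. Once that is in place, the conclusion that no singleton carries positive $\W_h$-mass is immediate, since a point mass would have to project to a point mass in the conditional law for a positive-measure set of conditioning outcomes.

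In writing this up I would phrase it as: fix $h$, set $t_h = \sqrt{\eta^2 - h^2}$, fix $r \in (0, t_h)$, and show $\W_h\big(\{\omega : \omega|_{[0,t_h]} = g|_{[0,t_h]}\}\big) = 0$ for every $g$ by disintegrating $\W_h$ along $\mathcal{F}_r$ and invoking that the conditional law of the increment on $[r, t_h]$ is $\ll \W$ and that Wiener measure is atomless; since a countable or even arbitrary point of $C_h$ is distinguished by its restriction to $[0, t_h]$, this gives atomlessness of $\W_h$. This keeps the argument short and sidesteps any explicit density computation for the parabolic hitting problem.
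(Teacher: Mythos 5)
Your approach works with the initial segment of the path up to $t_h=\sqrt{\eta^2-h^2}$, but the key step breaks down. First, the conditioning event $\{B_{\tau_\eta}=h\}$ is a $\W$-null event, so the description ``a Brownian motion conditioned on an event of positive probability'' is not available; $\W_h$ is a regular conditional distribution, and one cannot argue as if the conditioning were elementary. Second, and more seriously, the asserted absolute continuity is false: under $\W_h$ the path satisfies $B_{t_h}=h$ almost surely, so conditionally on $\cF_r$ the increment $(B_{r+u}-B_r)_{u\le t_h-r}$ has its \emph{terminal value pinned} at $h-B_r$. Its law is therefore a bridge-type law, mutually singular with Wiener measure on $C_0([0,t_h-r])$ (under which the terminal value is Gaussian), so atomlessness cannot be transferred by domination. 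The conclusion you want is still true --- bridge laws are atomless, and one could repair the argument by establishing absolute continuity only on $[r,r']$ with $r'<t_h$, or by computing with the parabolic hitting density --- but as written the central step fails, and any ``pre-$t_h$'' route also has to address that the kernel $\W_h$ is only canonically determined for $\mu$-a.e.\ $h$, whereas the lemma is stated for all $h$.

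The paper sidesteps all of this by looking at the path \emph{after} $\tau_\eta$ rather than before: since $\{B_{\tau_\eta}=h\}$ is $\cF_{\tau_\eta}$-measurable, the strong Markov property (stationarity and independence of increments) gives that under $\W_h$ the shifted path $\omega\mapsto\omega^{\tau_\eta(\omega)\mapsto}$ has law exactly the Wiener measure, with no conditioning left over. Wiener measure is atomless, and an atom of $\W_h$ would push forward to an atom of this image law, so $\W_h$ is atomless. This is a two-line argument requiring no densities, no bridges, and no discussion of versions beyond choosing the kernel compatibly with the strong Markov property. If you want to keep your initial-segment viewpoint (which would in fact prove the stronger statement that the law of $\omega|_{[0,t_h]}$ under $\W_h$ is atomless), you must replace the false domination claim by a genuine bridge/density argument.
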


\begin{proof}
  Consider the map $\Phi: C_{0}(\R_{+})\to C_{0}(\R_{+})$ given by $\Phi(\omega)=\omega^{\tau_{\eta}(\omega)\mapsto}$. By the stationarity and independence of Brownian increments, the pushforward $\W_{h}\circ\Phi^{-1}$ is the Wiener measure and in particular atomless. As a consequence, $\W_{h}$ is atomless, for if $\W_{h}$ had an atom then any pushforward would also have an atom. 
\end{proof}

\begin{lemma}\label{le:maptoLebesgue}
  Let $(P_{y}(dz))$ be a stochastic kernel $(Y,\cY)\times (Z,\cB(Z))\to [0,1]$ where $Z$ is a Polish space and $(Y,\cY)$ is a measurable space. If $P_{y}$ is atomless for all $y\in Y$, there exists a jointly measurable map $(y,z)\mapsto\phi_{y}(z)\in[0,1]$ such that $P_{y}\circ \phi_{y}^{-1}=\lambda$ for all $y\in Y$.
\end{lemma}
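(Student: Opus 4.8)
The plan is to construct $\phi_y$ by the standard "cumulative distribution function" trick, but carried out simultaneously in $y$ so as to retain joint measurability. First I would fix a countable generating algebra or, more conveniently, exploit the Borel isomorphism between the Polish space $Z$ and a standard Borel subset of $\R$: there is a bimeasurable bijection $\iota:Z\to Z'$ with $Z'\in\cB(\R)$. Pushing forward, $Q_y:=P_y\circ\iota^{-1}$ is a stochastic kernel from $(Y,\cY)$ into $\R$, and $Q_y$ is atomless for every $y$ because $\iota$ is a bijection (an atom of $Q_y$ would pull back to an atom of $P_y$). It therefore suffices to solve the problem for kernels on $\R$ and then set $\phi_y:=\psi_y\circ\iota$, where $\psi_y$ is the map built for $(Q_y)$; joint measurability of $\phi_y$ follows from that of $\psi_y$ together with measurability of $\iota$.

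Next, for the real-valued kernel $Q_y$ I would define the (randomization-free) generalized inverse via the distribution function. Set $F(y,x):=Q_y((-\infty,x])$; this is jointly measurable in $(y,x)$ (measurable in $y$ since $x\mapsto(-\infty,x]$ is a fixed set, nondecreasing and right-continuous in $x$), and for each fixed $y$ it is continuous in $x$ precisely because $Q_y$ is atomless. Define $\psi_y(x):=F(y,x)=Q_y((-\infty,x])\in[0,1]$. Joint measurability of $(y,x)\mapsto\psi_y(x)$ is immediate. The key computation is that $Q_y\circ\psi_y^{-1}=\lambda$ on $[0,1]$: for $t\in(0,1)$ one has $\{x:\psi_y(x)\le t\}=\{x:F(y,x)\le t\}=(-\infty,x_t]$ where $x_t:=\sup\{x:F(y,x)\le t\}$ (using continuity of $F(y,\cdot)$, so that $F(y,x_t)=t$ unless $x_t$ is an endpoint of the support, in which case a short separate check applies), whence $Q_y(\{\psi_y\le t\})=F(y,x_t)=t$. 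This shows $\psi_y$ pushes $Q_y$ to the uniform distribution. One should also handle the boundary and the possibility that $Q_y$ is not fully supported, but these are routine: the atomlessness guarantees $F(y,\cdot)$ takes every value in $(0,1)$, so no mass is lost.

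The main obstacle is purely the joint-measurability bookkeeping: one must be sure that passing through the Borel isomorphism $\iota$ and through the distribution function $F$ does not destroy measurability in $y$, and that the pushforward identity $Q_y\circ\psi_y^{-1}=\lambda$ holds for \emph{every} $y$, not merely almost every $y$ (there is no reference measure on $Y$). Both are handled by the explicit formulas above — $F(y,x)=Q_y((-\infty,x])$ is measurable in $y$ by definition of a kernel, $\iota$ and $\iota^{-1}$ are fixed Borel maps, and the CDF computation is a deterministic fact about atomless probability measures on $\R$ applied separately to each $y$. Thus, modulo the standard Borel-isomorphism reduction, the lemma reduces to the elementary statement that the distribution function of an atomless law on $\R$ transports that law to $\lambda|_{[0,1]}$, applied uniformly in the parameter.
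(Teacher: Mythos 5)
Your proposal is correct and follows essentially the same route as the paper: reduce to a real-valued kernel via a Borel isomorphism, then use the parametrized c.d.f.\ $F(y,x)=Q_y((-\infty,x])$, whose continuity in $x$ (atomlessness) and measurability in $y$ give joint measurability and the push-forward identity $Q_y\circ F(y,\cdot)^{-1}=\lambda$ for every $y$. The only cosmetic differences are that the paper maps onto $[0,1]$ (noting $Z$ must be uncountable) and cites the standard fact that $F(X)\sim\lambda$ for continuous $F$, whereas you embed into a Borel subset of $\R$ and re-derive that fact directly.
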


\begin{proof}
  Recall that any two Polish spaces of uncountable cardinality are homeomorphic as Borel spaces; cf.\ \cite[Theorem~2.12, p.\,14]{Parthasarathy.67}. As atomless measures can exist only on uncountable spaces, we deduce that there is a Borel homeomorphism $\Phi: (Z,\cB(Z))\to ([0,1],\cB([0,1]))$. Consider $Q_{y}=P_{y}\circ \Phi^{-1}$; then $(Q_{y})$ are atomless probability measures; i.e., their c.d.f.'s $F_{y}(x):=Q_{y}((-\infty,x])$ are continuous in $x$. By construction, they are also measurable in $y$. In particular, as Caratheodory functions, they are jointly measurable in $(x,y)$; cf.\ \cite[Lemma~4.51, p.\,153]{AliprantisBorder.06}. Finally, recall that if $F$ is the c.d.f.\ of a random variable $X$ with continuous distribution, then $F(X)\sim\lambda$. As a result, $\phi_{y}=F_{y}\circ\Phi$ satisfies the requirement of the lemma.
\end{proof}

Combining the two preceding lemmas, we obtain the following.

\begin{corollary}\label{co:borelTransform}
There exists a jointly Borel measurable map 
\[
  C_{0}(\R_{+})\times (-\sqrt{\eta},\sqrt{\eta})\to [0,1],\quad (\omega,h)\mapsto \phi_h(\omega)
\]
such that $\W_{h}\circ \phi_h^{-1}=\lambda$ for each $h$.
\end{corollary}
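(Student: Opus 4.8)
The plan is to deduce Corollary~\ref{co:borelTransform} by feeding the two preceding lemmas with the right choice of Polish space and kernel. Concretely, take $Z=C_{0}(\R_{+})$ with its Borel $\sigma$-field, take $Y=(-\sqrt{\eta},\sqrt{\eta})$ with its Borel $\sigma$-field, and take $P_{h}(dz):=\W_{h}(dz)$, the conditional law of $B$ given $B_{\tau^{\eta}}=h$ constructed above. The first thing to check is that $(h,A)\mapsto\W_{h}(A)$ is indeed a stochastic kernel in the sense of Lemma~\ref{le:maptoLebesgue}, i.e.\ that $h\mapsto\W_{h}(A)$ is measurable for each Borel $A$; this is exactly the defining property of a regular conditional distribution and was recorded in the displayed disintegration $\W(A)=\int_{\R}\W_{h}(A)\,\mu(dh)$ just before Lemma~\ref{le:WhAtomless}. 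The second thing to check is atomlessness of $P_{h}=\W_{h}$ for every $h$, but that is precisely Lemma~\ref{le:WhAtomless}.

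With those two inputs verified, Lemma~\ref{le:maptoLebesgue} applies verbatim and produces a jointly measurable map $(h,\omega)\mapsto\phi_{h}(\omega)\in[0,1]$ with $\W_{h}\circ\phi_{h}^{-1}=\lambda$ for all $h\in(-\sqrt{\eta},\sqrt{\eta})$. Rewriting the arguments in the order $(\omega,h)$ (a trivial relabeling that does not affect joint Borel measurability) gives exactly the map asserted in the corollary. So the proof is essentially a one-line invocation: ``Apply Lemma~\ref{le:maptoLebesgue} with $Z=C_{0}(\R_{+})$, $Y=(-\sqrt{\eta},\sqrt{\eta})$ and $P_{h}=\W_{h}$; the hypotheses hold by the disintegration property and Lemma~\ref{le:WhAtomless}.''

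The only genuinely non-routine point — and hence the step I would expect a referee to scrutinize — is the measurability of $h\mapsto\W_{h}$, together with the mild subtlety that Lemma~\ref{le:maptoLebesgue} requires a kernel defined for \emph{all} $y\in Y$, whereas $\W_{h}$ is naturally defined only $\mu$-a.e.\ and is concentrated on the $h$-dependent set $C_{h}$. Neither is a real obstacle: one fixes an arbitrary atomless probability on $C_{0}(\R_{+})$ (e.g.\ the Wiener measure) as a harmless default on the $\mu$-null exceptional set of $h$'s, which keeps atomlessness and measurability intact; and the fact that $\W_{h}$ lives on $C_{h}$ is irrelevant since Lemma~\ref{le:maptoLebesgue} only needs a kernel on the ambient Polish space $Z$. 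Everything else — the Borel isomorphism of uncountable Polish spaces, the Carathéodory/joint-measurability argument, and the ``$F(X)\sim\lambda$'' step — is carried out inside Lemma~\ref{le:maptoLebesgue} itself and need not be repeated.
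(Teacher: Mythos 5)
Your proof is correct and is exactly the paper's argument: the corollary is obtained by applying Lemma~\ref{le:maptoLebesgue} with $Z=C_{0}(\R_{+})$, $Y=(-\sqrt{\eta},\sqrt{\eta})$ and $P_{h}=\W_{h}$, whose hypotheses are supplied by the kernel (disintegration) property of $\W_{h}$ and the atomlessness from Lemma~\ref{le:WhAtomless}. Your extra remark about a default measure on a $\mu$-null set is harmless but unnecessary, since the paper fixes $\W_{h}$ as a kernel and Lemma~\ref{le:WhAtomless} gives atomlessness for every $h$.
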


We can now provide the proof of the proposition.

\begin{proof}[Proof of Proposition \ref{pr:RandomGenerator}]
Let $\xi=\W(d\omega)\xi_\omega(ds)$ be a disintegration of the given randomized stopping time $\xi\in \cR^\eta(\nu)$ and define $\bar \xi=\W(d\omega)\bar\xi_\omega(ds)$ through
\[
  \bar\xi_{f \oplus \omega} := \int_{C_{h(f)}} \xi_{g \oplus \omega} \W_{h(f)}(dg),\quad f, \omega \in C_0(\R_+).\]
Clearly $\bar\xi_{f \oplus \omega}$ depends on $f$ only through $h(f)$. We show below that 
$\bar \xi\in\cR^{\eta}(\nu)$ and $\bar \xi (G)= \xi(G)$ for all $G\in C_b^\eta$. Admitting this for the moment, it remains to construct a non-randomized stopping time with the same law as~$\bar \xi$.
Following Lemma~\ref{le:rho}, we can associate a stopping time $(\omega,u)\mapsto\rho(\omega,u)$ on the probability space $(C_0(\R_+)\times [0,1], \W\otimes \lambda)$ with $\bar\xi$, and we can choose a version of $\rho$ such that
$\rho(f \oplus \omega,u)$ depends on $f$ only through $h(f)$. Finally, let $\phi_{h}$ be as in Lemma~\ref{co:borelTransform}, then by construction, $\tau(\omega):= \rho(\omega, \phi_{h(\omega)}(\omega))$ is a stopping time on $C_0(\R_+)$ such that $\W\circ (B,\tau)^{-1}=\bar\W\circ (\bar B,\rho)^{-1}=\bar\xi$. Thus, its embedding $\xi^{\tau}\in\cR_{\cT}$ is the required non-randomized stopping time.

It remains to verify that $\bar \xi\in\cR^{\eta}(\nu)$ and $\bar \xi (G)= \xi(G)$ for all $G\in C_b^\eta$. 
Let $f\in C_{h}$ and $\omega\in C_0(\R_+)$; then $\bar\xi_{f \oplus \omega}$ is
adapted and concentrated on $[\tau_{\eta}(f),\infty)$) since $\xi_{g \oplus \omega}$ has these properties for all $g\in C_{h}$ (recall that $\tau_{\eta}$ is constant on $C_{h}$). This yields that $\bar\xi\in\cR^{\eta}$.
Next, let $G\in C_{b}^{\eta}$. Then $\bar\xi(G)$ is equal to
\begin{align*}
&\int_{C_0(\R_+)} \int_{\R_+} G(\omega,s) \bar\xi_\omega(ds) \W(d\omega) \\
&= \int_{\R}\int_{C_0(\R_+)}\int_{C_h}\int_{\R_+} G(f \oplus \omega,s) \bar\xi_{f \oplus \omega}(ds) \W_h(df)\W(d\omega)\mu(dh) \\
&= \int_{\R}\int_{C_0(\R_+)}\int_{C_h}\int_{C_h} \int_{\R_+}  G(f \oplus \omega,s) \xi_{g \oplus \omega}(ds) \W_h(dg)\W_h(df)\W(d\omega)\mu(dh) \\
&= \int_{\R}\int_{C_0(\R_+)}\int_{C_h}\int_{C_h} \int_{\R_+}  G(g \oplus \omega,s) \xi_{g \oplus \omega}(ds) \W_h(dg)\W_h(df)\W(d\omega)\mu(dh) \\
&= \int_{C_0(\R_+)} \int_{\R_+} G(\omega,s) \xi_\omega(ds) \W(d\omega) = \xi(G).
\end{align*}
This also implies that $\bar\xi(\bt)=\xi(\bt)<\infty$, since $\bt\wedge n\in C_{b}^{\eta}$ for all $n\in\N$. Finally, to see that $\xi$ and $\bar\xi$ embed the same distribution~$\nu$, we show that $\bar\xi(\phi(B)) = \xi(\phi(B))$ for $\phi\in C_{b}(\R)$. Indeed, consider $G:=\phi(B_{\bt\vee\tau_{\eta}})\in C_{b}^{\eta}$ (recall that adaptedness was not required). Since we already know that $\xi,\bar\xi\in\cR^{\eta}$, we have $\xi(\phi(B))=\xi(G)=\bar\xi(G)=\bar\xi(\phi(B))$ and the proof is complete.\end{proof}

\section{The Dual Problem}\label{se:dualProblem}

We first introduce the domain of the dual problem. 
To that end, recall that~$\nu$ is a centered distribution on $\R$ with finite second moment and let $J\subseteq \R$ be the smallest convex set with $\nu(J)=1$. Thus, a boundary point of~$J$ is contained in~$J$ if and only if it is an atom of $\nu$. We fix%
\footnote{The results below do not depend on the choice of $K_{n}$. When $\nu$ has bounded support with atoms at both endpoints, we can simply take $K_{n}=J$ for all $n$.}
an increasing sequence $(K_{n})$ of compact intervals $0\in K_{n}\subseteq J$ whose union is $J$ and let 
$$
  T_{n}=\inf \{t\geq0: B_{t}\in \partial K_{n}\}
$$
be the first hitting time of the boundary $\partial K_{n}$.
Recall that probabilistic notions are understood with respect to the canonical space $C_{0}(\R_{+})$ and that~$S$ can be embedded in $C_{0}(\R_{+})\times\R$. We fix a (not necessarily measurable) function $G: S\to[0,\infty]$ and introduce the following.

\begin{definition}\label{de:dualDomain}
  Let $\cD(G)$ be the set of all pairs $(M,\psi)$ where the Borel function $\psi: J\to \R \cup\{\infty\}$ is in $L^{1}(\nu)$ and $M$ is a (continuous) local $(\W,\F)$-martingale with $M_{0}=0$ such that 
$$
  M + \psi (B) \geq G \quad\mbox{ on }\quad \cup_{n}[0,T_{n}]\quad\mbox{ (up to evanescence)}
$$
and $M_{\cdot \wedge T_{n}}$ is bounded below for all $n$. The \emph{dual problem} is
$$
  \bI(G)=\inf_{(M,\psi)\in \cD(G)} \nu(\psi).
$$
\end{definition}

The continuity of $M$ refers to its paths being a.s.\ continuous. We recall that Brownian local martingales always admit continuous versions and assume implicitly that all local martingales are continuous in what follows. We also note that $\cup_{n}[0,T_{n}]=\R_{+}\times\Omega$ if $J=\R$. In Section~\ref{se:counterex} it will be shown that the value of the dual problem $\bI(G)$ can change if $M$ is restricted to true martingales or if $\psi$ is restricted to continuous functions. As the relaxations in Definition~\ref{de:dualDomain} are novel, we detail some technical observations.

\begin{remark}\label{rk:horizon}
   Setting  $T=\lim T_{n}=\inf \{t\geq0: B_{t}\in \partial J\}$, we have $\cup_{n}[0,T_{n}]=[0,T]$ if $J$ is closed and $\cup_{n}[0,T_{n}]=[0,T)$ if $J$ is open, but in general either of the inclusions $[0,T)\subseteq \cup_{n}[0,T_{n}]\subseteq [0,T]$ can be strict and the value of $\bI(G)$ can change if $\cup_{n}[0,T_{n}]$ is replaced by $[0,T)$ in Definition~\ref{de:dualDomain}. One can, however, replace $\cup_{n}[0,T_{n}]$ by $[0,T]$ without altering the value of $\bI(G)$, since given $(M,\psi)\in\cD(G)$ we may extend $\psi$ by setting $\psi=\infty$ on $\partial J\setminus J$ without affecting $\nu(\psi)$. (We have nevertheless found it less confusing to write $\cup_{n}[0,T_{n}]$ everywhere.)
\end{remark}

\begin{remark}\label{rk:testingOptional}
  Suppose that $G$ is Borel measurable; i.e., $G$ can be seen as an optional process. Then the inequality $M + \psi (B) \geq G$ on $\cup_{n}[0,T_{n}]$ up to evanescence is equivalent to the almost-sure inequality $M_{\tau} + \psi(B_{\tau}) \geq G_{\tau}$ for all stopping times $\tau$ with $\tau\leq T_{n}$ for some $n$. This follows from the optional cross-section theorem; cf.\ \cite[Theorem~IV.84, p.\,137]{DellacherieMeyer.78}.
\end{remark}

\begin{remark}\label{rk:sampling}
  Using Fatou's lemma, boundedness from below of $M_{\cdot \wedge T_{n}}$ implies that for all stopping times $\sigma\leq\tau\leq T_{n}$, the random variables $M_{\sigma},M_{\tau}$ are integrable and satisfy the optional sampling property $E[M_{\tau}|\cF_{\sigma}]\leq M_{\sigma}$.
\end{remark}

\begin{lemma}\label{le:superhedUptoEmbedding}
Let $(M,\psi)\in \cD(G)$. (i) We have $\W\{\tau\leq T_{n}\}\to 1$ and $M + \psi (B) \geq G$ on $[0,\tau]$ for all $\tau\in\cT(\nu)$. (ii) We have $M + \psi (B) \geq G$ $\xi$-a.s.\ for all $\xi\in\cR(\nu)$.
\end{lemma}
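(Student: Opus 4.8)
The plan is to reduce both assertions to a single fact about embeddings: every $\tau\in\cT(\nu)$ satisfies $\tau\le T$ almost surely, where $T:=\lim_n T_n=\inf\{t\ge0:B_t\in\partial J\}$ (cf.\ Remark~\ref{rk:horizon}). From this, assertion~(i) follows with essentially only bookkeeping about the sequence $(K_n)$, and assertion~(ii) is obtained by running the same argument on the canonical enlargement $\bar C_0(\R_+)$ and invoking Lemma~\ref{le:rho}. The point where all the work lies is exactly $\tau\le T$: the functions $M$ and $\psi$ are useless here, since $\psi$ may be large and positive, so $M+\psi(B)\ge G\ge0$ carries no lower bound for $M$; instead $\tau\le T$ must be extracted purely from the martingale $B_{\cdot\wedge\tau}$, its uniform integrability --- which is precisely where the second moment of $\nu$ enters --- and the support condition $\supp\nu\subseteq\bar J$. (The case $\nu=\delta_0$ is trivial and is set aside at the outset.)

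\emph{Step 1 (the core fact $\tau\le T$).} Since $E[\tau]<\infty$, Wald's identity gives $E[B_\tau^2]=E[\tau]<\infty$, so $B_{\cdot\wedge\tau}$ is an $L^2$-bounded, hence uniformly integrable, martingale. Write $\ell=\inf J$, $r=\sup J$. On $\{T<\infty\}$ one has $B_T\in\{\ell,r\}$, and $\{\tau>T\}\subseteq\{T<\infty\}$ because $\tau<\infty$ a.s., so it suffices to rule out $\W(A)>0$ for $A:=\{\tau>T,\ B_T=r\}$ with $r<\infty$ (the left endpoint being symmetric). Here $A\in\cF_T$, and since $A\subseteq\{T\le\tau\}$ also $A\in\cF_{T\wedge\tau}$. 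Optional sampling of $B_{\cdot\wedge\tau}$ at $T\wedge\tau\le\tau$ gives $E[B_\tau\1_A\mid\cF_{T\wedge\tau}]=B_T\1_A=r\1_A$, and as $B_\tau\le r$ a.s.\ the nonnegative variable $(r-B_\tau)\1_A$ has vanishing conditional expectation, so $B_\tau=r$ on $A$. Stopping the $(\cF_{T+s})_s$-martingale $s\mapsto(B_{T+s}-B_T)^2-s$ at $\sigma:=(\tau-T)^+$ --- a uniformly integrable optional stopping, since $E[\sigma]\le E[\tau]<\infty$ --- yields $E[(B_{T+\sigma}-B_T)^2\mid\cF_T]=E[\sigma\mid\cF_T]$; multiplying by $\1_A$ and using that $B_{T+\sigma}=B_\tau=r=B_T$ on $A$, the left side vanishes, so $E[\sigma\1_A\mid\cF_T]=0$, i.e.\ $\sigma=0$, i.e.\ $\tau=T$, on $A$ --- contradicting $\tau>T$. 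Hence $\W\{\tau>T\}=0$.

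\emph{Step 2 (deriving (i)).} Since $T_n\uparrow T$ and $\tau\le T$ a.s., the event $\{\tau>T_n\ \forall n\}$ agrees up to a null set with $\{\tau=T,\ T_n<T\ \forall n\}$; on the latter $\tau=T<\infty$ and $B_\tau=B_T\in\{\ell,r\}$, say $B_T=r$. If $r\notin J$, then $\W\{B_\tau=r\}=\nu\{r\}=0$. If $r\in J$, then $r\in K_n$, hence $\sup K_n=r$, for all large $n$, while $\min_{s\le T}B_s>\ell=\lim_n\inf K_n$; so for large $n$ the path does not meet $\partial K_n=\{\inf K_n,\,r\}$ strictly before $T$, forcing $T_n=T$ --- contradicting $T_n<T$. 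The left endpoint is symmetric, and $J=\R$ makes $T=\infty>\tau$ outright; thus $\W\{\tau\le T_n\}\uparrow1$. The same analysis shows $[0,\tau]\subseteq\cup_n[0,T_n]$ up to evanescence (the part of $[0,\tau]$ lying outside $\cup_n[0,T_n]$ sits over the null event just described), so the inequality $M+\psi(B)\ge G$ on $\cup_n[0,T_n]$ restricts to $[0,\tau]$, which completes~(i).

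\emph{Step 3 (deriving (ii)).} Given $\xi\in\cR(\nu)$, take $\rho$ as in Lemma~\ref{le:rho}: an $\bar\F$-stopping time on $(\bar C_0(\R_+),\bar\W)$ with $\bar\W\circ(\bar B,\rho)^{-1}=\xi$, hence $E^{\bar\W}[\rho]=\xi(\bt)<\infty$ and $\bar B_\rho\sim\nu$. Lift $M$ to $\bar M:=M\circ\proj$, where $\proj\colon\bar C_0(\R_+)\to C_0(\R_+)$ is the projection; then $\bar M$ is a local $(\bar\W,\bar\F)$-martingale, $\bar M_{\cdot\wedge\bar T_n}$ is bounded below (with $\bar T_n$ the first hitting time of $\partial K_n$ by $\bar B$), and $\bar M+\psi(\bar B)\ge G$ on $\cup_n[0,\bar T_n]$ up to $\bar\W$-evanescence, since the $\W$-evanescent exceptional set pulls back to a $\bar\W$-evanescent one. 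Running the argument of Steps~1--2 on $(\bar C_0(\R_+),\bar\W,\bar\F)$ with the embedding $\rho$ in place of $\tau$ gives $\bar M+\psi(\bar B)\ge G$ on $[0,\rho]$ up to evanescence; restricting to the graph of $\rho$ gives $\bar M_\rho+\psi(\bar B_\rho)\ge G(\bar B,\rho)$ $\bar\W$-a.s., and pushing forward under $(\bar B,\rho)$, whose law is $\xi$, gives $M+\psi(B)\ge G$ $\xi$-a.s. The main obstacle is Step~1; the only other mildly delicate point, the behaviour of $\partial K_n$ near an atom of $\nu$ located at a finite endpoint of $J$, is handled above by noting that such an endpoint necessarily belongs to $K_n$ for $n$ large.
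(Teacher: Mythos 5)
Your proof is correct and takes essentially the same route as the paper: part (i) rests on the uniform integrability of $B_{\cdot\wedge\tau}$ (from $E[\tau]<\infty$ and the second moment of $\nu$) together with $B_\tau$ being supported in $J$ to obtain $\W\{\tau\le T_n\}\to1$ and hence $[0,\tau]\subseteq\cup_n[0,T_n]$ up to evanescence, and part (ii) is reduced to (i) on the enlarged space via the stopping time $\rho$ of Lemma~\ref{le:rho}. The only difference is that you spell out, through Wald-type optional sampling arguments, the key claim $\W\{\tau\le T_n\}\to1$, which the paper states as an immediate consequence of uniform integrability and the support condition.
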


\begin{proof}
  (i) Let $\tau\in\cT(\nu)$; in particular, $E[\tau]<\infty$. Since $B_{\cdot \wedge \tau}$ is a uniformly integrable martingale and $B_{\tau}$ is supported on $J$, we have $\W\{\tau\leq T_{n}\}\to 1$ and hence $[0,\tau]\subseteq \cup_{n}[0,T_{n}]$. In particular, $M + \psi (B) \geq G$ on $[0,\tau]$ up to evanescence. 
  
  (ii) Consider the stopping time $\rho$ associated with $\xi$ on the extended space~$\bar{C_{0}}(\R_{+})$; cf.\ Lemma~\ref{le:rho}. As $\bar\W\circ (\bar B,\rho)^{-1}=\xi$, we can deduce the claim by applying the arguments for (i) to $\rho$ and $\bar B$.
\end{proof}

\begin{lemma}\label{le:envelope}
  Let $(M,\psi)\in\cD(0)$ and write $\psi^{**}$ for the convex envelope on~$J$. Then $(M,\psi^{**})\in\cD(0)$; i.e.,
  \begin{equation}\label{eq:ConvexHullSuperhed}
    M + \psi^{**}(B)\geq 0 \quad\mbox{ on }\quad \cup_{n}[0,T_{n}],
  \end{equation}
  and in particular $\psi^{**}(0)\geq0$.
\end{lemma}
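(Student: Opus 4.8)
The plan is to exploit the local martingale property of $M$ together with the optional sampling inequality from Remark~\ref{rk:sampling}, using hitting times of intervals to compare $\psi$ and its convex envelope $\psi^{**}$. First I would fix $n$ and work on $[0,T_n]$, where $M_{\cdot\wedge T_n}$ is bounded below and hence, by Remark~\ref{rk:sampling}, satisfies the supermartingale sampling inequality along stopping times bounded by $T_n$. Fix a time-space point, i.e.\ a stopping time $\sigma\le T_n$ and condition on $\cF_\sigma$; write $x=B_\sigma\in K_n$. By definition of the convex envelope, there is an affine function $\ell(y)=a+by$ with $\ell\le\psi$ on $J$ and $\ell(x)=\psi^{**}(x)$ (or $\psi^{**}(x)$ is a limit of such values; one handles the boundary of $J$ by approximation or by the extension $\psi=\infty$ on $\partial J\setminus J$ noted in Remark~\ref{rk:horizon}). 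Choose two points $y_1\le x\le y_2$ in $K_n$ — for instance the endpoints of $K_n$, or points close to them — and let $\tau=\inf\{t\ge\sigma: B_t\in\{y_1,y_2\}\}\wedge T_n$; since $B$ started from $x$ eventually exits $[y_1,y_2]$, in fact $\tau\le T_n$ with the exit happening at $y_1$ or $y_2$.

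The key computation is then: on $[0,\tau]$ we have $M+\psi(B)\ge 0$ by hypothesis (as $\tau\le T_n$), so $M_\tau\ge -\psi(B_\tau)\ge -\ell(B_\tau)$ when $B_\tau\in\{y_1,y_2\}$ — using $\ell\le\psi$. Taking conditional expectation and using $E[M_\tau\mid\cF_\sigma]\le M_\sigma$ from Remark~\ref{rk:sampling}, together with the fact that $(a+bB_{t\wedge\tau})_{t\ge\sigma}$ is a (bounded, hence true) martingale from time $\sigma$ so that $E[\ell(B_\tau)\mid\cF_\sigma]=\ell(B_\sigma)=\ell(x)=\psi^{**}(x)$, I get
\[
  M_\sigma \ge E[M_\tau\mid\cF_\sigma] \ge -E[\ell(B_\tau)\mid\cF_\sigma] = -\psi^{**}(B_\sigma).
\]
Since $\sigma\le T_n$ was an arbitrary stopping time and $G$ here is $0$ (so the relevant inequality is Borel/optional), the optional cross-section theorem as in Remark~\ref{rk:testingOptional} upgrades this to $M+\psi^{**}(B)\ge 0$ on $[0,T_n]$ up to evanescence; taking the union over $n$ gives \eqref{eq:ConvexHullSuperhed}. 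Finally, evaluating at $\sigma\equiv 0$ (so $B_\sigma=0$, $M_\sigma=0$) yields $\psi^{**}(0)\ge 0$. It remains to note $(M,\psi^{**})\in\cD(0)$: $\psi^{**}\le\psi$ so $\psi^{**}\in L^1(\nu)$ (bounded above by an integrable function and below by an affine function, since $\psi^{**}$ is convex and finite on the interior of $J$), $M$ is unchanged, and the required inequality and lower bounds on $M_{\cdot\wedge T_n}$ hold as before.

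The main obstacle I anticipate is the careful handling of the convex envelope at the boundary of $J$: if $\psi$ takes the value $+\infty$ at an endpoint of $J$ that is an atom of $\nu$, or if $J$ is unbounded, then the supporting affine function $\ell$ at a point $x$ near $\partial J$ may be genuinely needed and its slope can blow up as $x\to\partial J$; one must make sure the exit times $\tau$ and the martingale $(a+bB_{t\wedge\tau})$ are well-controlled — this is where restricting to $K_n$ (compact, so $\psi$ is bounded below by an affine function on $K_n$ and the exit argument is clean) does the work, and one passes to the limit in $n$ only at the end. A secondary point is justifying that $E[\ell(B_\tau)\mid \cF_\sigma]=\ell(B_\sigma)$: this needs $(B_{t\wedge\tau})$ to be a true martingale after $\sigma$, which holds because $B$ is bounded on $[\sigma,\tau]$ by construction (it stays in $K_n$, or between $y_1$ and $y_2$).
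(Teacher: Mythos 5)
There is a genuine gap: the central inequality chain runs in the wrong direction. You take an affine minorant $\ell\le\psi$ on $J$ with $\ell(B_\sigma)=\psi^{**}(B_\sigma)$ and claim $M_\tau\ge-\psi(B_\tau)\ge-\ell(B_\tau)$ at the exit point $B_\tau\in\{y_1,y_2\}$. But $\ell\le\psi$ gives $-\psi(B_\tau)\le-\ell(B_\tau)$, not the reverse; your chain would require $\psi\le\ell$ at the exit points, i.e.\ that $y_1,y_2$ are contact points where the supporting line actually touches $\psi$. For your suggested choice ($y_1,y_2$ the endpoints of $K_n$, $\ell$ an arbitrary supporting line at $B_\sigma$) there is no reason for this, so $E[M_\tau\mid\cF_\sigma]\ge-\ell(B_\sigma)$ does not follow. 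What your setup genuinely yields, after correcting the signs, is $M_\sigma\ge E[M_\tau\mid\cF_\sigma]\ge -E[\psi(B_\tau)\mid\cF_\sigma]=-\bigl(\lambda\psi(y_1)+(1-\lambda)\psi(y_2)\bigr)$ with $\lambda y_1+(1-\lambda)y_2=B_\sigma$, i.e.\ a bound by one particular two-point convex combination of $\psi$-values, which is in general strictly weaker than $-\psi^{**}(B_\sigma)$. Concretely, take $\psi(x)=|x|$ for $x\ne0$ and $\psi(0)=100$, so $\psi^{**}(0)=0$: at a time with $B_\sigma=0$, exiting a fixed interval $[y_1,y_2]\ni 0$ only gives $M_\sigma\ge-\tfrac{2|y_1|\,y_2}{y_2-y_1}<0$, whereas the lemma demands $M_\sigma\ge0$; to reach $0$ one must let the exit interval shrink, i.e.\ choose $y_1,y_2$ (nearly) attaining the infimum defining $\psi^{**}(B_\sigma)$.

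That choice is the missing idea, and it is exactly what the paper does: it uses the representation $\psi^{**}(x)=\inf\bigl\{\tfrac{b}{a+b}\psi(x-a)+\tfrac{a}{a+b}\psi(x+b)\bigr\}$, argues by contradiction, and selects ($\cF_\sigma$-measurably) random radii $a,b$ with $[B_\sigma-a,B_\sigma+b]\subseteq K_m$ whose two-point combination (nearly) realizes $\psi^{**}(B_\sigma)$; the exit time of $(B_\sigma-a,B_\sigma+b)$ then reproduces precisely that combination as $E[\psi(B_\tau)\mid\cF_\sigma]$, and optional sampling (your use of Remark~\ref{rk:sampling}, which is fine) yields the contradiction. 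So the skeleton of your argument — stopping at $\sigma\le T_n$, exiting an interval while staying in $K_m$, the supermartingale sampling for $M$, the cross-section/evanescence bookkeeping, and the concluding $L^1(\nu)$ observation — is sound, but the supporting-line device must be replaced by a measurable selection of near-optimal exit points (equivalently, the contact points of the supporting line with $\psi$, which need not exist and in any case are not the endpoints of $K_n$). A secondary point: your construction presupposes that $\psi$ admits an affine minorant, i.e.\ that $\psi^{**}>-\infty$; this is part of what must be proved, and the paper's contradiction argument covers it automatically, whereas your direct argument cannot even start in that case.
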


\begin{proof}
  We first observe that for $x\in J$,
  $$
    \psi^{**}(x) = \inf \big\{\tfrac{b}{a+b} \psi(x-a) + \tfrac{a}{a+b} \psi(x+b):\, a,b\in\R_{+},\; [x-a,x+b]\subseteq J\big\}
  $$
  where the sum is understood as $\psi(x)$ if $a=b=0$. Suppose that~\eqref{eq:ConvexHullSuperhed} fails, then by the optional cross-section theorem there exist $n\geq1$ and a stopping time $\sigma\leq T_{n}$ such that
  $$
     \W\{M_{\sigma} + \psi^{**}(B_{\sigma})<0\}>0. 
  $$
  By the above formula for $\psi^{**}(x)$ and a measurable selection argument, it follows that there exist $m\geq n$ and $\cF_{\sigma}$-measurable random variables $a,b\geq0$ with $[B_{\sigma}-a,B_{\sigma}+b]\subseteq K_{m}$ such that 
  \begin{equation}\label{eq:proofEnvelopeContrad}
    \W\big\{ M_{\sigma} + \tfrac{b}{a+b} \psi(B_{\sigma}-a) + \tfrac{a}{a+b} \psi(B_{\sigma}+b) < 0\big\} >0.
  \end{equation}
  Let $\tau=\inf\big\{t\geq \sigma:\, B_{t}-B_{\sigma}\notin (-a,b)\big\}$ and note that $\tau\leq T_{m}$. Then optional sampling (cf.\ Remark~\ref{rk:sampling}) implies
  $$
    M_{\sigma} + \tfrac{b}{a+b} \psi(B_{\sigma}-a) + \tfrac{a}{a+b} \psi(B_{\sigma}+b) \geq E[M_{\tau}+\psi(B_{\tau})|\cF_{\sigma}].
  $$
  But $(M,\psi)\in\cD(0)$  implies that $M_{\tau}+\psi(B_{\tau})\geq0$, and now a contradiction to~\eqref{eq:proofEnvelopeContrad} ensues. In particular, $\psi^{**}(0)\geq0$. Hence, the convex function $\psi^{**}$ is bounded from below by a linear function and from above by $\psi$, so that $\psi^{**}\in L^{1}(\nu)$.
\end{proof}

The following normalization will be used repeatedly below.

\begin{remark}\label{rk:shift}
  Let $(M,\psi)\in\cD(G)$ and $c\in\R$. Define $M'=M+cB$ and $\psi'(x)=\psi(x)-cx$. Then $(M',\psi')\in\cD(G)$ since $M'+\psi'(B)=M+\psi(B)$ and $B$ is a bounded martingale on $[0,T_{n}]$ with $B_{0}=0$.
  
  Using this with the (left, say) derivative $c:=\partial^{-} \psi^{**}(0)$ and recalling that $\psi^{**}(0)\geq0$ by Lemma~\ref{le:envelope}, we see that any $(M,\psi)\in\cD(G)$ can be normalized such that $\psi^{**}\geq0$ and hence also $\psi\geq0$.
\end{remark}

Next, we establish the key inequality for the ``weak'' duality and in particular that our definition of the dual domain is rigid enough despite the relaxations.

\begin{lemma}\label{le:Msupermart}
  Let $(M,\psi)\in\cD(0)$. (i) We have $E[M_{\tau}]\leq0$ for all $\tau\in\cT(\nu)$.
  (ii) We have $\xi(M)\leq0$ for all $\xi\in\cR(\nu)$.
\end{lemma}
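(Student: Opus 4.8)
The plan is to prove (i) first and then derive (ii) from it by passing to the extended space, exactly as in the proof of Lemma~\ref{le:superhedUptoEmbedding}. By Remark~\ref{rk:shift} we may normalize so that $\psi\geq0$, indeed $\psi^{**}\geq0$; this is harmless for the claim since $E[M_\tau]\leq0$ is unaffected by the shift $M'=M+cB$ (because $E[B_\tau]=0$ for $\tau\in\cT(\nu)$, using $E[\tau]<\infty$ and uniform integrability of $B_{\cdot\wedge\tau}$). So fix $(M,\psi)\in\cD(0)$ with $\psi\geq0$ and $\tau\in\cT(\nu)$.

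First I would record the two ingredients: by Lemma~\ref{le:superhedUptoEmbedding}(i), $\W\{\tau\leq T_n\}\to1$ and $M+\psi(B)\geq0$ on $[0,\tau]$; and by Remark~\ref{rk:sampling}, since $M_{\cdot\wedge T_n}$ is bounded below, $M_{\cdot\wedge T_n}$ is a supermartingale, in particular $E[M_{\tau\wedge T_n}]\leq0$ for every $n$. The task is then to pass to the limit $n\to\infty$ in $E[M_{\tau\wedge T_n}]\leq0$ to obtain $E[M_\tau]\leq0$. We have $M_{\tau\wedge T_n}\to M_\tau$ a.s.\ (since $\{\tau\leq T_n\}\uparrow$ up to a null set), so it suffices to justify the interchange of limit and expectation. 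The bound from below comes for free: on $[0,\tau]$ we have $M\geq-\psi(B)$, hence $M_{\tau\wedge T_n}\geq-\psi(B_{\tau\wedge T_n})$. The issue is that $B_{\tau\wedge T_n}$ need not be $\nu$-distributed, so I need a uniformly integrable lower bound for $-\psi(B_{\tau\wedge T_n})$, i.e.\ an integrable dominating function for the negative parts.

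The key step — and the main obstacle — is therefore the uniform integrability of $\{\psi(B_{\tau\wedge T_n})\}_n$, or at least of $\{\psi^{**}(B_{\tau\wedge T_n})\}_n$ together with a comparison. Here I would exploit convexity. With $\psi^{**}$ the convex envelope on $J$ we have $\psi\geq\psi^{**}\geq0$, and $\psi^{**}(B_{\cdot\wedge T_n})$ is a (nonnegative, hence true) submartingale because $B_{\cdot\wedge T_n}$ is a bounded martingale; thus $E[\psi^{**}(B_{\tau\wedge T_n})]\leq E[\psi^{**}(B_\tau)]\leq\nu(\psi^{**})\leq\nu(\psi)<\infty$ by optional sampling applied to the submartingale $\psi^{**}(B_{\cdot\wedge\tau})$ (legitimate since it is nonnegative and $B_{\tau}\sim\nu$). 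Actually the cleanest route is: $\psi^{**}(B_{\cdot\wedge\tau})$ is a nonnegative submartingale with $E[\psi^{**}(B_\tau)]<\infty$, hence a martingale in the limit, hence uniformly integrable, so $\{\psi^{**}(B_{\tau\wedge T_n})\}_n$ is uniformly integrable; and since $M+\psi^{**}(B)\geq M+\psi(B)\geq0$ would require $(M,\psi^{**})\in\cD(0)$ — which is exactly Lemma~\ref{le:envelope} — we get $M_{\tau\wedge T_n}\geq-\psi^{**}(B_{\tau\wedge T_n})$ with the right-hand side uniformly integrable. Combining: $M_{\tau\wedge T_n}$ is bounded below by a uniformly integrable sequence and converges a.s.\ to $M_\tau$, so by (the extended) Fatou's lemma $E[M_\tau]\leq\liminf_n E[M_{\tau\wedge T_n}]\leq0$. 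This proves (i).

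For (ii), let $\xi\in\cR(\nu)$ and take the stopping time $\rho$ on the extended space $\bar C_0(\R_+)$ from Lemma~\ref{le:rho}, so that $\bar\W\circ(\bar B,\rho)^{-1}=\xi$ and in particular $E^{\bar\W}[\rho]=\xi(\bt)<\infty$ and $\bar B_\rho\sim\nu$. The process $M$ lifts to the continuous local martingale $M(\bar B)$ on $\bar C_0(\R_+)$ with the same properties (the pair $(M,\psi)$ remains in the analogue of $\cD(0)$ for $\bar B$, and $T_n$ lifts to the hitting time of $\partial K_n$ by $\bar B$), so $\rho\in\cT(\nu)$ in the extended filtration and part~(i) applies verbatim to give $E^{\bar\W}[M_\rho(\bar B)]\leq0$, i.e.\ $\xi(M)\leq0$. \qed
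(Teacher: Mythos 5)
Your overall route coincides with the paper's: normalize via Remark~\ref{rk:shift}, use Lemma~\ref{le:envelope} to bound $M\geq-\psi^{**}(B)$ on $[0,\tau]$, produce a uniformly integrable lower bound, conclude by Fatou with a UI minorant, and lift part~(ii) to the extended space via Lemma~\ref{le:rho} exactly as in the paper. The one place where your justification does not hold up as written is precisely the key uniform-integrability step. You assert that $\psi^{**}(B_{\cdot\wedge\tau})$, being a nonnegative submartingale with integrable terminal value, is ``a martingale in the limit, hence uniformly integrable''; this implication is false in general (e.g.\ $\exp(B_t-t/2)$ is a nonnegative submartingale converging a.s.\ to the integrable limit $0$ but is not uniformly integrable). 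Likewise, your earlier appeal to optional sampling for the submartingale $\psi^{**}(B_{\cdot\wedge\tau})$ ``since it is nonnegative and $B_\tau\sim\nu$'' is not by itself legitimate: optional sampling at an unbounded time requires a closedness/class~(D) property, which is exactly what is at stake here.

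The correct and short argument, which is the one the paper uses, relies on an ingredient you already mentioned but did not deploy at this point: since $E[\tau]<\infty$ and $B_\tau\sim\nu$, the stopped process $B_{\cdot\wedge\tau}$ is a uniformly integrable ($L^2$-bounded) martingale, so $B_{t\wedge\tau}=E[B_\tau\,|\,\cF_t]$, and Jensen's inequality for the convex function $\psi^{**}$ gives $0\leq\psi^{**}(B_{t\wedge\tau})\leq E[\psi^{**}(B_\tau)\,|\,\cF_t]$, where $\psi^{**}(B_\tau)\in L^1$ because $\psi^{**}\leq\psi$ and $\nu(\psi)<\infty$. Thus $\psi^{**}(B_{\cdot\wedge\tau})$ is dominated by a closed martingale and is of class~(D); in particular the family $\{\psi^{**}(B_{\tau\wedge T_n})\}_n$ is uniformly integrable, and your Fatou passage $E[M_\tau]\leq\liminf_n E[M_{\tau\wedge T_n}]\leq0$ goes through. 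With this repair, your proof of (i) — and hence of (ii), which is identical to the paper's lifting argument — is essentially the paper's proof.
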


\begin{proof}
  (i) We prove the claim for any stopping time $\tau$ such that $B_{\cdot\wedge\tau}$ is uniformly integrable,  $E[\psi(B_{\tau})]<\infty$ and $[0,\tau]\subseteq \cup_{n}[0,T_{n}]$; in particular, these are satisfied for $\tau\in\cT(\nu)$. Indeed, by Remark~\ref{rk:shift}, we can assume without loss of generality that $\psi\geq0$ and then $0\leq\psi^{**}\leq \psi$. Since $\psi^{**}$ is convex and $\psi^{**}(B_{\tau})$ is integrable, it follows that  $\psi^{**}(B_{\cdot\wedge \tau})$ is a nonnegative uniformly integrable submartingale, hence of class~(D). On the other hand, Lemma~\ref{le:envelope} yields that
  $M \geq -\psi^{**}(B)$ on $[0,\tau]$, so we conclude that $M^{-}_{\cdot\wedge\tau}$ is of class~(D). Fatou's lemma (in its version with a uniformly integrable lower bound) now implies that $E[M_{\tau}]\leq0$.

  (ii) Let $\xi\in\cR(\nu)$ and let $\rho$ be the stopping time associated with $\xi$ on the extended space $\bar{C_{0}}(\R_{+})$; cf.\ Lemma~\ref{le:rho}. Then $\xi\in\cR(\nu)$ implies that $\rho$ is an embedding of $\nu$ into $\bar B$ and the argument of~(i) applied on $\bar{C_{0}}(\R_{+})$ yields $\xi(M)=E^{\bar\W}[M(\bar{B})_{\rho}]\leq0$.
\end{proof}

Definition~\ref{de:dualDomain} is tailored to imply the following closedness property; it will be used to derive the absence of a duality gap in the next section and immediately implies the existence of a dual optimizer.

\begin{proposition}\label{pr:closedness}
  Consider $G^{k},G:S\to[0,\infty]$ and $(M^{k},\psi_{k})\in \cD(G^{k})$ for $k\geq1$. Assume that $G^{k}\to G$ pointwise and $\sup_{ k}\nu(\psi_{k})<\infty$. Then there exist $(M,\psi)\in \cD(G)$ such that $\nu(\psi)\leq \liminf \nu(\psi_{k})$.
\end{proposition}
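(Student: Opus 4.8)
The plan is to extract convergent subsequences of the dual components after an appropriate normalization, using the compactness that Definition~\ref{de:dualDomain} was designed to provide, and then to verify that the limit lies in $\cD(G)$. First I would normalize each pair: by Remark~\ref{rk:shift} we may assume $\psi_k\ge 0$ and $\psi_k^{**}\ge 0$ for every $k$ (this uses Lemma~\ref{le:envelope}, which applies since $M^k+\psi_k(B)\ge G^k\ge 0$, i.e.\ $(M^k,\psi_k)\in\cD(0)$). Under this normalization $\nu(\psi_k)=\nu(|\psi_k|)$, so the hypothesis $\sup_k\nu(\psi_k)<\infty$ says the $\psi_k$ are bounded in $L^1(\nu)$, and since they are nonnegative they are tight as measures $\psi_k\,d\nu$; passing to a subsequence (and using Komlós or a Banach–Saks / Mazur argument — more precisely, convex combinations converging in $L^1(\nu)$, since convex combinations do not spoil membership in $\cD$) one obtains $\psi\in L^1(\nu)$, $\psi\ge 0$, with $\nu(\psi)\le\liminf\nu(\psi_k)$. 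Because the constraint set $\cD(\cdot)$ is convex in $(M,\psi)$ jointly and $G^k\to G$, replacing $(M^k,\psi_k)$ by the corresponding convex combinations $(\tilde M^k,\tilde\psi_k)$ keeps $\tilde M^k+\tilde\psi_k(B)\ge \inf_{j\ge k}G^j \to G$ in the limit, so I will assume from now on that $\psi_k\to\psi$ in $L^1(\nu)$.

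The heart of the matter is the martingale component $M^k$. Here I would exploit the two structural features of $\cD$: on each $[0,T_n]$ the process $M^k_{\cdot\wedge T_n}$ is bounded below, and it satisfies the supermartingale sampling inequality of Remark~\ref{rk:sampling}; moreover $M^k_{\cdot\wedge T_n}\ge G^k-\psi_k(B)\ge -\psi_k(B_{\cdot\wedge T_n})$, and on $[0,T_n]$ the Brownian motion stays in the compact $K_n$, so after passing to $\psi_k^{**}$ (convex, hence bounded on $K_n$) we get a \emph{uniform} lower bound $M^k_{\cdot\wedge T_n}\ge -c_n$ for a constant $c_n$ depending only on $n$ and $\sup_k\|\psi_k^{**}\|_{L^\infty(K_n)}$ — and the latter is finite since a convex function on $K_n$ dominated by $\psi_k$ with $\nu(\psi_k)$ bounded is uniformly bounded (its value at any interior point is controlled by its integral against $\nu$ restricted to $K_n$, using that $\nu$ charges a neighborhood of that point). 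Thus $M^k_{\cdot\wedge T_n}+c_n$ is a sequence of nonnegative supermartingales on $[0,T_n]$ started from $c_n$; such a sequence is tight in the sense needed (e.g.\ via the weak compactness of nonnegative supermartingales, Föllmer–Kramkov / Delbaen–Schachermayer type, or via Komlós on the terminal values and optional projection), so along a further subsequence and taking convex combinations we obtain a limiting nonnegative supermartingale on $[0,T_n]$; doing this by diagonalization over $n$ and reconciling the pieces (they agree on overlaps because $T_n$ is increasing) yields a process $N$ on $\cup_n[0,T_n]$ that is a supermartingale, bounded below on each $[0,T_n]$, with $N\ge G-\psi(B)$. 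Finally I would pass from the supermartingale $N$ to a genuine local martingale $M$ via the Doob–Meyer decomposition $N=M-A$ with $A$ increasing; then $M=N+A\ge N\ge G-\psi(B)$ on $\cup_n[0,T_n]$, $M$ is a continuous local martingale with $M_0=0$ (after subtracting $N_0\ge 0$, which only helps the inequality and can be absorbed), and $M_{\cdot\wedge T_n}=N_{\cdot\wedge T_n}+A_{\cdot\wedge T_n}$ is bounded below since both summands are. Hence $(M,\psi)\in\cD(G)$ with $\nu(\psi)\le\liminf\nu(\psi_k)$, as required.

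The step I expect to be the main obstacle is the compactness and passage to the limit for the martingale components: one must get a genuinely useful mode of convergence for the $M^k$ (not merely finite-dimensional), ensure the limit is still a supermartingale despite only having lower bounds on the sets $[0,T_n]$ (and not globally — this is exactly where true martingales would fail and where the local-martingale relaxation buys us room), and then recover a local martingale whose paths are continuous. Controlling $\sup_k\|\psi_k^{**}\|_{L^\infty(K_n)}$ from the $L^1(\nu)$-bound — so that the lower bounds $c_n$ are uniform in $k$ — is the quantitative crux that makes the supermartingale compactness argument go through; I would isolate this as a lemma. A secondary technical point is that convex combinations must be taken simultaneously in $(M^k,\psi_k)$ and compatibly across all $n$, which a careful diagonal extraction handles.
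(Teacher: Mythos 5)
Your proposal follows essentially the same route as the paper's proof: normalize via Remark~\ref{rk:shift}, take Koml\'os-type convex combinations of the $\psi_k$ and use Fatou, derive from $\sup_k\nu(\psi_k)<\infty$ a uniform bound on $\psi_k^{**}$ over each $K_n$ so that Lemma~\ref{le:envelope} yields $M^k_{\cdot\wedge T_n}\geq -c_n$, pass to a limiting supermartingale along further convex combinations --- the paper invokes Czichowsky--Schachermayer, which is exactly the compactness for supermartingales with uniform lower bounds that you describe --- and finally decompose it to recover the local martingale. The only adjustments the paper makes at the points you yourself flag are that the limit is merely an optional strong supermartingale, so the Mertens decomposition is used rather than Doob--Meyer, and that $\psi$ is taken as the pointwise $\limsup$ of the convex combinations (Koml\'os gives $\nu$-a.s., not $L^1(\nu)$, convergence), which is what allows the inequality $M+\psi(B)\geq G$ to be verified at all stopping times $\tau\leq T_n$ and upgraded to an inequality up to evanescence via the optional cross-section theorem (Remark~\ref{rk:testingOptional}).
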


\begin{proof}
  Let $c=\sup \nu(\psi_{k})$. By passing to a subsequence we may assume that $\lim \nu(\psi_{k})$ exists.
  By Remark~\ref{rk:shift}, we may normalize $\psi_{k}$ such that $\psi^{**}_{k}\geq0$. 
  
  As $\psi_{k}\geq \psi^{**}_{k}\geq0$, Komlos' lemma  (see \cite[Lemma A1.1]{DelbaenSchachermayer.94} and its subsequent remark)  allows us to find convex combinations $\phi_{k}$ of $(\psi_{k},\psi_{k+1},\dots)$ which converge $\nu$-a.s. Let $\psi:=\limsup \phi_{n}$ and note that 
  $$
    0\leq \nu(\psi)=\nu(\liminf \phi_{n})\leq \lim \nu(\phi_{k})= \lim \nu(\psi_{k})
  $$
  by Fatou's lemma. After replacing $M_{k}$ with the corresponding convex combinations, we may assume that $ \phi_{k}=\psi_{k}$.
  
  We have $0\leq \nu(\psi^{**}_{k}) \leq \nu(\psi_{k}) \leq c$.
  As in the proof of \cite[Proposition~5.5]{BeiglbockNutzTouzi.15}, this implies a uniform (in $k$) bound for the Lipschitz constant of $\psi^{**}_{k}$ on each compact subset of $J$, and hence a uniform bound $0\leq \psi^{**}_{k}\leq c_{n}$ on $K_{n}$, independent of $k$. By Lemma~\ref{le:envelope}, it follows that
  $$
    M^{k} \geq -\psi^{**}_{k}(B)\geq -c_{n} \quad\mbox{on}\quad  [0,T_{n}].
  $$
  This guarantees that $(M^{k})$ admit a limiting supermartingale $Z$ in a suitable sense. More precisely, \cite[Theorem~2.7]{CzichowskySchachermayer.16} and a diagonal argument show that after taking suitable convex combinations $N^{k}$ of $(M^{k},M^{k+1},\dots)$, there exists an optional%
\footnote{
To be completely precise, \cite{CzichowskySchachermayer.16} assumes the usual conditions for the filtration. We can, for instance, apply their result in the $\W$-augmented filtration $\F^{a}$ and then pass back to $\F$ at the end of this paragraph: the local martingale $M$ is necessarily continuous and hence $\F^{a}$-predictable, but then we can choose an $\F$-predictable version (up to evanescence) of $M$ by applying~\cite[Appendix~I.7, p.\,399]{DellacherieMeyer.82}.
}
process $Z$ which is a strong supermartingale (as defined in \cite[Appendix~I]{DellacherieMeyer.82}) on $[0,T_{n}]$ for all $n$ and
  $$
    N^{k}_{\tau}\to Z_{\tau}  \quad\mbox{in probability}
  $$ 
  for any stopping time $\tau$ such that $\tau\leq T_{n}$ for some $n$. We may assume that $N^{k}=M^{k}$. As $Z_{0}=0$, the process $Z$ admits a Mertens decomposition $Z=M-A$ where $A$ is nondecreasing, $M$ is a local martingale and $A_{0}=M_{0}=0$; cf.\ \cite[Appendix~I.20, p.\,414]{DellacherieMeyer.82}.
  
  It remains to show that $(M,\psi)\in \cD(G)$. Set $H=\limsup_{k} (M^{k}+\psi_{k}(B))$; then $H$ is optional and $H\geq G$ on $\cup_{n}[0,T_{n}]$ up to evanescence. Thus, it suffices to show that $(M,\psi)\in \cD(H)$. Indeed, we have 
  $$
    M_{\tau}\geq Z_{\tau} \geq -c_{n} \quad\mbox{and}\quad M_{\tau} + \psi (B_{\tau}) \geq Z_{\tau} + \psi (B_{\tau}) \geq H_{\tau}
  $$
  for all $\tau\leq T_{n}$. As $H$ is optional, we conclude that $(M,\psi)\in \cD(H)$ by using Remark~\ref{rk:testingOptional}.
\end{proof}

Applying Proposition~\ref{pr:closedness} to a constant sequence, we deduce that dual existence holds for general reward functions, in contrast to previous formulations of the dual problem~\cite{BeiglbockCoxHuesmann.14,GuoTanTouzi.15a} where existence can fail even for more regular reward functions.

\begin{corollary}\label{co:dualExistence}
  Let $G: S\to [0,\infty]$. If $\bI(G)<\infty$, there exists a dual optimizer $(M,\psi)\in \cD(G)$.
\end{corollary}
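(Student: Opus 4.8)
The plan is to derive Corollary~\ref{co:dualExistence} as an immediate consequence of the closedness property established in Proposition~\ref{pr:closedness}. The idea is simply to apply that proposition along a minimizing sequence for the dual problem, taking the sequence of reward functions to be constant.

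More precisely, I would argue as follows. Suppose $\bI(G)<\infty$. By definition of $\bI(G)$ as an infimum over $\cD(G)$, we can choose a minimizing sequence $(M^{k},\psi_{k})\in\cD(G)$ with $\nu(\psi_{k})\to \bI(G)$ as $k\to\infty$. In particular $\sup_{k}\nu(\psi_{k})<\infty$. Now set $G^{k}:=G$ for all $k$, so that trivially $G^{k}\to G$ pointwise and $(M^{k},\psi_{k})\in\cD(G^{k})=\cD(G)$. Proposition~\ref{pr:closedness} then provides $(M,\psi)\in\cD(G)$ with $\nu(\psi)\leq\liminf_{k}\nu(\psi_{k})=\bI(G)$. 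Since $(M,\psi)\in\cD(G)$ we also have $\nu(\psi)\geq \bI(G)$ by definition of the infimum, whence $\nu(\psi)=\bI(G)$ and $(M,\psi)$ is a dual optimizer.

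There is essentially no obstacle here: all the work has already been done in Proposition~\ref{pr:closedness}. The only point worth being slightly careful about is that $\bI(G)<\infty$ guarantees $\cD(G)\neq\emptyset$ and hence a genuine minimizing sequence exists; this is immediate from the definition of the infimum. One might also note that no lower-boundedness issue arises, since every $(M,\psi)\in\cD(G)$ can be normalized (Remark~\ref{rk:shift}) so that $\psi\geq0$, giving $\bI(G)\geq0$ automatically, though this is not even needed for the argument.

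\begin{proof}[Proof of Corollary~\ref{co:dualExistence}]
Assume $\bI(G)<\infty$. Then $\cD(G)\neq\emptyset$ and we may choose $(M^{k},\psi_{k})\in\cD(G)$ with $\nu(\psi_{k})\to \bI(G)$; in particular $\sup_{k}\nu(\psi_{k})<\infty$. Applying Proposition~\ref{pr:closedness} with the constant sequence $G^{k}:=G$ yields $(M,\psi)\in\cD(G)$ with
\[
  \nu(\psi)\leq \liminf_{k\to\infty}\nu(\psi_{k})=\bI(G).
\]
Since $(M,\psi)\in\cD(G)$, the reverse inequality $\nu(\psi)\geq\bI(G)$ holds by definition of $\bI(G)$. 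Hence $\nu(\psi)=\bI(G)$ and $(M,\psi)$ is a dual optimizer.
\end{proof}
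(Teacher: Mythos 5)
Your proof is correct and is exactly the paper's argument: the corollary is obtained by applying Proposition~\ref{pr:closedness} to a minimizing sequence with the constant reward sequence $G^{k}=G$.
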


\section{Duality}\label{se:duality}

In this section we combine the closedness result of Proposition~\ref{pr:closedness} with capacity theory and facts about optimal Skorokhod embeddings to establish the absence of a duality gap. We first state the weak duality.
  
\begin{lemma}\label{le:weakDuality}
  Let $G: S\to [0,\infty]$. Then $\bS(G)\leq \bI(G)$.
\end{lemma}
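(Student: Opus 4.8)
The plan is to establish weak duality by a direct pointwise estimate: take any dual-feasible pair $(M,\psi)\in\cD(G)$ and any primal-feasible $\xi\in\cR(\nu)$, and show that $\xi(G)\leq\nu(\psi)$. Taking the supremum over $\xi$ and then the infimum over $(M,\psi)$ yields $\bS(G)\leq\bI(G)$; and if $\cD(G)=\varnothing$ then $\bI(G)=\infty$ and there is nothing to prove, so we may assume $\cD(G)\neq\varnothing$.

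The heart of the argument uses the two lemmas already proved. First, by Lemma~\ref{le:superhedUptoEmbedding}(ii), the superhedging inequality $M+\psi(B)\geq G$ holds $\xi$-a.s., so integrating against $\xi$ gives
\[
  \xi(G)\leq \xi(M) + \xi(\psi(B)).
\]
Since $\psi\in L^{1}(\nu)$ and $\xi\circ B^{-1}=\nu$ by definition of $\cR(\nu)$, the second term equals $\nu(\psi)$. For the first term, I would apply Lemma~\ref{le:Msupermart}(ii): since $(M,\psi)\in\cD(G)\subseteq\cD(0)$ (the inequality $M+\psi(B)\geq G\geq 0$ in particular gives $(M,\psi)\in\cD(0)$, as $G$ takes values in $[0,\infty]$), we get $\xi(M)\leq 0$. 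Combining, $\xi(G)\leq \nu(\psi)$, which is exactly what is needed.

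One should double-check the measurability/integrability bookkeeping so that the chain of inequalities makes sense: $\xi(G)$ is defined as an outer integral if $G$ is not measurable, but $M+\psi(B)$ is a measurable majorant of $G$ holding $\xi$-a.s., so $\xi(G)\leq \xi(M+\psi(B))$ as outer integrals, and the right-hand side splits once we know $\xi(M)$ and $\xi(\psi(B))$ are well-defined; the former is controlled (in fact shown to be $\leq 0$) in Lemma~\ref{le:Msupermart}(ii) via the associated stopping time $\rho$ on the enlarged space, and the latter is finite because $\psi\in L^{1}(\nu)$.

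Honestly, there is no serious obstacle here: the two substantive facts — that $M+\psi(B)\ge G$ survives integration against any $\xi\in\cR(\nu)$, and that $\xi(M)\le 0$ — have already been extracted as Lemmas~\ref{le:superhedUptoEmbedding} and~\ref{le:Msupermart}, precisely to make this step routine. The only mild subtlety is the normalization used inside Lemma~\ref{le:Msupermart} (shifting $(M,\psi)$ by $(cB,-cx)$ to arrange $\psi\ge 0$), but that is internal to that lemma and does not affect the value $\xi(G)-\nu(\psi)$, which is invariant under such shifts; so the present proof can simply invoke the lemma as a black box.
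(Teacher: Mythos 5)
Your proposal is correct and follows essentially the same route as the paper's proof: integrate the $\xi$-a.s.\ inequality $M+\psi(B)\geq G$ from Lemma~\ref{le:superhedUptoEmbedding} and use $\xi(M)\leq 0$ from Lemma~\ref{le:Msupermart} (noting $\cD(G)\subseteq\cD(0)$ since $G\geq 0$) together with $\xi\circ B^{-1}=\nu$. The extra remarks on outer integrals and the empty-domain case are consistent with the paper's conventions and add nothing problematic.
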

\begin{proof}
  We need to show that $\nu(\psi)\geq \xi(G)$ whenever $(M,\psi)\in\cD(G)$ and $\xi\in\cR(\nu)$. Indeed, by Lemma~\ref{le:superhedUptoEmbedding} we have $M +\psi(B)\geq G$ $\xi$-a.s. Taking expectations under $\xi$ and recalling that $\xi(M)\leq0$ by Lemma~\ref{le:Msupermart}, the claim follows by the monotonicity of the (outer) integral.
\end{proof}

Next, we state a duality result for semicontinuous functions. The following is a consequence of
\cite[Theorem~1.2]{BeiglbockCoxHuesmann.14} as well as of \cite[Theorem~2.4]{GuoTanTouzi.15a} after noting that their dual domain is a subset of ours. We provide a sketch of proof for the convenience of the reader.

\begin{proposition}\label{pr:strongDualityCont}
Let $G: S\to\R$ be bounded and upper semicontinuous. Then $\bS(G) = \bI(G)$.
\end{proposition}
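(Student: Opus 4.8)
The plan is to establish the nontrivial inequality $\bS(G)\geq\bI(G)$ for bounded upper semicontinuous $G$, since weak duality $\bS(G)\leq\bI(G)$ is already Lemma~\ref{le:weakDuality}. The natural route is a minimax/compactness argument. First I would note that $\cR(\nu)$ is convex and compact in the usual weak topology (as recalled in the proof of Lemma~\ref{le:EasyTesting}, via \cite[Theorem~3.14]{BeiglbockCoxHuesmann.14}), and that $\xi\mapsto\xi(G)$ is upper semicontinuous on $\cR(\nu)$ when $G$ is bounded u.s.c. Hence the primal supremum $\bS(G)=\sup_{\xi\in\cR(\nu)}\xi(G)$ is attained by some $\xi^{*}\in\cR(\nu)$, and more importantly one has a Lagrangian/duality structure: the constraints defining $\cR(\nu)\subseteq\cR$ are the marginal constraint $\xi\circ B^{-1}=\nu$ together with the (automatically encoded) supermartingale/stopping structure. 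The pairing between the marginal constraint and its Lagrange multiplier is exactly $\nu(\psi)-\xi(\psi(B))$ for $\psi$ a bounded continuous function on $J$.

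Concretely, I would proceed as follows. Consider the larger set $\cR$ of all randomized stopping times with $\xi(\bt)<\infty$, which is also weakly compact on each sublevel set, and for a bounded continuous $\psi:J\to\R$ define the penalized problem
\[
  \bS_{\psi}(G)=\sup_{\xi\in\cR,\ \xi(\bt)<\infty}\bigl(\xi(G)-\xi(\psi(B))\bigr)+\nu(\psi).
\]
The key point is that for fixed $\psi$ this is an \emph{unconstrained} optimal stopping problem (the marginal constraint has been dualized away): one is maximizing $\xi(G-\psi(B))$ over all randomized stopping times, which by classical optimal stopping theory equals the value of the Snell envelope of $G-\psi(B)$, and crucially this value is of the form $\E[M_{0}]$ with $M_{0}=0$ plus a martingale correction — i.e.\ it produces a pair $(M,\psi)$ in (a version of) $\cD(G)$ after adding back $\nu(\psi)$. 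Then, taking the infimum over $\psi$ and invoking a minimax theorem — Sion's theorem applied to the bilinear-in-$\psi$, concave-in-$\xi$, convex-in-$\psi$ function, using weak compactness of the relevant sublevel sets of $\cR$ — one gets
\[
  \bS(G)=\sup_{\xi\in\cR(\nu)}\xi(G)=\inf_{\psi}\sup_{\xi}\bigl(\xi(G)-\xi(\psi(B))+\nu(\psi)\bigr)\geq\bI(G).
\]
The last inequality holds because each penalized problem, via its Snell envelope, yields an admissible dual pair; one must check the lower-boundedness of $M_{\cdot\wedge T_{n}}$, which follows since $G-\psi(B)$ is bounded and the localization sequence $T_{n}$ keeps $B$ — hence $\psi(B)$ — bounded on $[0,T_{n}]$.

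The main obstacle, and where care is needed, is the passage from the penalized optimal stopping value to a genuine element of the relaxed dual domain $\cD(G)$: one has to produce a \emph{continuous local martingale} $M$ with $M_{0}=0$ and $M+\psi(B)\geq G$ on $\cup_{n}[0,T_{n}]$, bounded below on each $[0,T_{n}]$, together with $\psi\in L^{1}(\nu)$. Upper semicontinuity of $G$ is exactly what makes the Snell envelope well-behaved (right-continuity, the envelope dominates $G$), and the u.s.c.\ regularity also lets one restrict to $\psi$ that are, say, continuous and of linear growth so that $\nu(\psi)$ is finite; the martingale part of the Snell envelope's Doob–Meyer decomposition then supplies $M$. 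Since the problem states this is merely a known consequence of \cite[Theorem~1.2]{BeiglbockCoxHuesmann.14} and \cite[Theorem~2.4]{GuoTanTouzi.15a} (whose dual domains are subsets of $\cD(G)$, so that $\bI(G)$ is only smaller than the value appearing there while $\bS(G)$ is unchanged), I would actually give the short argument: cite the strong duality $\bS(G)=\bI_{\mathrm{BCH}}(G)$ from those references, observe $\cD_{\mathrm{BCH}}(G)\subseteq\cD(G)$ so $\bI(G)\leq\bI_{\mathrm{BCH}}(G)=\bS(G)$, and combine with the weak duality $\bS(G)\leq\bI(G)$ of Lemma~\ref{le:weakDuality} to conclude equality. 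The sketch promised in the paper is then essentially a reminder of how \cite{BeiglbockCoxHuesmann.14} proves its duality (Snell envelope plus minimax), which is what I outlined above.
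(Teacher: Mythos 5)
Your proposal is correct and follows essentially the paper's own route: the paper likewise deduces the result from \cite[Theorem~1.2]{BeiglbockCoxHuesmann.14} and \cite[Theorem~2.4]{GuoTanTouzi.15a} via the observation that their dual domains are contained in $\cD(G)$ (so $\bI(G)\leq\bS(G)$), combined with the weak duality of Lemma~\ref{le:weakDuality}, and its accompanying sketch is the same dualize-the-marginal, Snell-envelope plus Doob--Meyer argument you outline, including the check that $M_{\cdot\wedge T_{n}}$ is bounded below because $G-\psi(B)$ is bounded on $[0,T_{n}]$. The only cosmetic difference is that the paper justifies the interchange of $\sup$ and $\inf$ by Fenchel--Moreau applied to the concave, Wasserstein-upper-semicontinuous map $\nu'\mapsto\bS(G,\nu')$ rather than by Sion's minimax theorem (where your appeal to ``weak compactness of sublevel sets of $\cR$'' would need care, since $\cR$ itself is not weakly compact); as your committed proof is the citation argument, this does not affect correctness.
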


\begin{proof}[Sketch of Proof.]
  The inequality $\bS(G)\leq \bI(G)$ holds by Lemma~\ref{le:weakDuality}. We sketch an argument for the reverse inequality following~\cite{GuoTanTouzi.15a} (and refer to the latter for details). 
The key idea is to dualize the constraint~$\nu$ and use the Fenchel--Moreau theorem. Indeed, let $\cV$ be the set of all centered probability measures $\nu$ with finite first moment, equipped with the 1-Wasserstein metric. One verifies that if $\nu_{n}\to\nu$ in $\cV$ and $\xi_{n}\in\cR(\nu_{n})$, then there exists $\xi\in\cR(\nu)$ which is a weak limit of a subsequence $(\xi_{n_{k}})$. Let $\nu\in\cV$ and let $\bS(\nu)=\bS(G,\nu)$ be the corresponding primal problem. Taking $\xi_{n}\in\cR(\nu_{n})$ to be a $(1/n)$-optimizer for the primal problem $\bS(\nu_{n})$; i.e.,
$$
  \xi_{n}(G) \geq \sup_{\xi'\in\cR(\nu_{n})} \xi'(G) -1/n,
$$
it follows that
$
  \bS(\nu)\geq \xi(G) \geq \limsup \bS(\nu_{n}).
$
In brief, $\nu\mapsto \bS(\nu)$ is upper semicontinuous on $\cV$, and clearly it is also concave and finite-valued. The space $\cV$ can be seen as a closed convex subspace of a Hausdorff locally convex vector space~$V$ of signed measures and $\nu\mapsto \bS(\nu)$ can be extended to~$V$ by assigning the value $-\infty$ outside of $\cV$. The topological dual is $V^{*}=\cC^{1}$, the space of continuous functions $\psi:\R\to\R$ with linear growth. The Fenchel--Moreau theorem then shows that $\bS(\nu)$ is equal to its biconjugate,
\begin{equation}\label{eq:Fenchel}
  \bS(\nu)=\bS^{**}(\nu)=\inf_{\psi\in\cC^{1}} \left[\sup_{\nu'\in\cV} \bS(\nu') - \nu'(\psi)\right] + \nu(\psi).
\end{equation}
Let us fix $\psi\in\cC^{1}$ and focus on the inner optimization,
$$
  \sup_{\nu'\in\cV} \bS(\nu') - \nu'(\psi) = \sup_{\nu'\in\cV} \sup_{\xi\in\cR(\nu')} \xi(G-\psi(B)).
$$
Seeing the functional $Y:=G-\psi(B)$ as the reward function of an optimal stopping problem, one can check that
$$
  v_{0}(\psi):=\sup_{\nu'\in\cV} \sup_{\xi\in\cR(\nu')} \xi(Y) = \sup_{\xi\in\cR} \xi(Y)=\sup_{\tau\in\cT} E[Y_{\tau}]
$$
is simply the value function of the associated standard optimal stopping problem. In particular, $v_{0}=v_{0}(\psi)$ is the initial value of the associated Snell envelope $Z$; i.e., the minimal supermartingale dominating~$Y$. We write its Doob--Meyer decomposition as 
$
  Z=v_{0}+M-A
$
where $M$ is a local martingale, $A$ is increasing and $A_{0}=M_{0}=0$.
Then 
$$
  v_{0}+M\geq Y \quad\mbox{or equivalently}\quad v_{0}+M + \psi(B)\geq G.
$$
Since $\psi$ has linear growth and $G$ is bounded, $v_{0}+M + \psi(B)\geq G$ implies that $M_{\cdot \wedge T_{n}}$ is bounded below for all $n$. As a result, $(M,\bar\psi)\in\cD(G)$ for the function $\bar\psi:=v_{0}+\psi$. Since this holds for all $\psi\in\cC^{1}$, \eqref{eq:Fenchel} yields that
$$
  \bS(\nu)=\bS^{**}(\nu)=\inf_{\psi\in\cC^{1}} v_{0}(\psi) + \nu(\psi)  \geq \inf_{(\bar\psi,M)\in\cD(G)} \nu(\bar\psi)=\bI(G)
$$
as desired.
\end{proof}

On the strength of the closedness property in Proposition~\ref{pr:closedness}, we can use capacity theory to extend the duality to measurable functions.
Let $[0,\infty]^{S}$ be the set of all functions $G: S\to [0,\infty]$, let $\USA_{+}$ be the sublattice of upper semianalytic%
\footnote{The function $G$ is called upper semianalytic if the sets $\{G\geq c\}$ are analytic for all $c\in\R$, where a subset of $S$ is called analytic if it is the image of a Borel subset of a Polish space under a Borel mapping. Any Borel function is upper semianalytic and any upper semianalytic function is universally measurable. See, e.g., \cite[Chapter~7]{BertsekasShreve.78} for background.
}
functions and let $\cU$ be the sublattice of bounded upper semicontinuous functions; note that $\cU$ is stable with respect to countable infima. A mapping $\bC: [0,\infty]^{S} \to [0,\infty]$ is called a $\cU$-capacity if it is monotone, sequentially continuous upwards on $[0,\infty]^{S}$ and sequentially continuous downwards on~$\cU$.

\begin{lemma}\label{le:SisCapacity}
  The mapping $\bS: [0,\infty]^{S}\to [0,\infty]$ is a $\cU$-capacity.
\end{lemma}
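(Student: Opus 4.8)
The plan is to verify the three defining properties of a $\cU$-capacity in turn: monotonicity, sequential upward continuity on all of $[0,\infty]^{S}$, and sequential downward continuity on the lattice $\cU$ of bounded upper semicontinuous functions. Monotonicity is immediate: if $G\leq G'$ pointwise on $S$, then for every $\xi\in\cR(\nu)$ the (outer) integral satisfies $\xi(G)\leq\xi(G')$, and taking suprema gives $\bS(G)\leq\bS(G')$. So the two substantive points are the upward and downward continuity statements.

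For \emph{upward} continuity, suppose $G^{k}\uparrow G$ pointwise with $G^{k},G\in[0,\infty]^{S}$; I must show $\bS(G^{k})\uparrow\bS(G)$. The inequality $\sup_k\bS(G^{k})\leq\bS(G)$ follows from monotonicity. For the reverse, fix $\xi\in\cR(\nu)$ and apply the monotone convergence theorem for the (outer) integral $\xi(\cdot)$: since $0\leq G^{k}\uparrow G$, we get $\xi(G^{k})\uparrow\xi(G)$, hence $\sup_k\bS(G^{k})\geq\sup_k\xi(G^{k})=\xi(G)$. Taking the supremum over $\xi\in\cR(\nu)$ yields $\sup_k\bS(G^{k})\geq\bS(G)$. (The only mild subtlety is that when $G^{k}$ is not Borel we are using the monotone convergence theorem for outer integrals, which is standard; alternatively one can first reduce to the measurable case since $\USA_+$ suffices for the later applications, but the statement as given should be handled directly for outer integrals.)

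For \emph{downward} continuity on $\cU$, suppose $G^{k}\downarrow G$ with each $G^{k}$ bounded and upper semicontinuous, so that $G\in\cU$ as well. I must show $\bS(G^{k})\downarrow\bS(G)$. Monotonicity gives $\bS(G^{k})\downarrow L\geq\bS(G)$ for some limit $L$, so the work is the inequality $L\leq\bS(G)$. This is where I expect the main obstacle, and it is exactly where the duality results of the preceding section enter: by Proposition~\ref{pr:strongDualityCont}, for each $k$ there is no duality gap, $\bS(G^{k})=\bI(G^{k})$, so I may choose $(M^{k},\psi_{k})\in\cD(G^{k})$ with $\nu(\psi_{k})\leq\bS(G^{k})+1/k\leq\bS(G^{1})+1$; in particular $\sup_k\nu(\psi_{k})<\infty$. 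Now apply the closedness result Proposition~\ref{pr:closedness} with this sequence and the pointwise limit $G^{k}\to G$: it produces $(M,\psi)\in\cD(G)$ with $\nu(\psi)\leq\liminf\nu(\psi_{k})\leq L$. By weak duality (Lemma~\ref{le:weakDuality}), $\bS(G)\leq\bI(G)\leq\nu(\psi)\leq L$, which is the desired inequality. Combining the two directions gives $\bS(G^{k})\downarrow\bS(G)$, completing the verification that $\bS$ is a $\cU$-capacity. The key point — and the reason the proof is short — is that Proposition~\ref{pr:closedness} was designed precisely to make the dual domain closed under pointwise limits of the reward with bounded dual values, so downward continuity of $\bS$ is inherited from it via the no-gap property on $\cU$.
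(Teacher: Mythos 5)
Your verification of monotonicity and of upward continuity is correct and essentially the same as the paper's (commuting two suprema, with monotone convergence for the outer integral at fixed $\xi$). The problem is the downward-continuity step on $\cU$, where your argument proves the wrong inequality. Proposition~\ref{pr:closedness}, applied to near-optimal $(M^{k},\psi_{k})\in\cD(G^{k})$ with $G^{k}\to G$ pointwise, produces $(M,\psi)\in\cD(G)$ with $\nu(\psi)\leq\liminf\nu(\psi_{k})\leq L$; combined with weak duality this gives $\bS(G)\leq\bI(G)\leq\nu(\psi)\leq L$. But $\bS(G)\leq L$ is exactly the trivial direction you already had from monotonicity ($G\leq G^{k}$ implies $\bS(G)\leq\bS(G^{k})$ for every $k$). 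The substantive inequality $L\leq\bS(G)$ is never established: nothing in your chain bounds $\bS(G)$ from below by $\lim_{k}\bS(G^{k})$. This is not a fixable slip of wording: the closedness result is a lower-semicontinuity statement for the dual value along pointwise convergent rewards, which is why the paper uses it for the \emph{upward} continuity of $\bI$ (Lemma~\ref{le:IisCapacity}), where the nontrivial direction is $\bI(G)\leq\liminf\bI(G_{n})$; the downward continuity of $\bI$ on $\cU$ is there deduced from that of $\bS$ via $\bS=\bI$ on $\cU$, i.e.\ the logical dependence runs in the opposite direction from your proposal. One also cannot repair it by inserting a near-optimal $(M,\psi)\in\cD(G)$ into $\cD(G^{k})$ for large $k$: $M+\psi(B)\geq G$ does not yield $M+\psi(B)\geq G^{k}$ without a Dini-type uniform convergence, which a pointwise decreasing limit does not provide.

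The inequality $L\leq\bS(G)$ requires a primal compactness argument, which is what the paper does: since every $\xi\in\cR(\nu)$ satisfies $\xi(\bt)=\int x^{2}\,d\nu$, the set $\cR(\nu)$ is weakly compact (cf.\ \cite[Theorem~3.14]{BeiglbockCoxHuesmann.14}). Choose $\xi_{n}\in\cR(\nu)$ with $\xi_{n}(G_{n})\geq\bS(G_{n})-1/n$ and extract a weakly convergent subsequence $\xi_{n}\to\xi\in\cR(\nu)$. For fixed $m$ and $n\geq m$ one has $\xi_{n}(G_{m})\geq\xi_{n}(G_{n})$, and since $G_{m}$ is bounded and upper semicontinuous the portmanteau inequality gives $\xi(G_{m})\geq\limsup_{n}\xi_{n}(G_{m})\geq\limsup_{n}\bS(G_{n})=L$. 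Letting $m\to\infty$ and using bounded monotone convergence, $\bS(G)\geq\xi(G)=\lim_{m}\xi(G_{m})\geq L$. That step is the actual content of the lemma and is missing from your proof.
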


\begin{proof}
  As $\xi(\bt)=\int x^{2}\,d\nu$ for all $\xi\in\cR(\nu)$, the set $\cR(\nu)$ is a nonempty compact space of probability measures on~$S$; see for instance \cite[Theorem~3.14]{BeiglbockCoxHuesmann.14}. This implies that the associated sublinear map $G\mapsto \sup_{\xi\in\cR(\nu)} \xi(G)\equiv \bS(G)$ is a capacity. Indeed, continuity upwards is immediate by commuting two suprema. Let $G_{n}\in\cU$ decrease to $G\in\cU$, then there are $\xi_{n}\in\cR(\nu)$ such that $\xi_{n}(G_{n})\geq \bS(G_{n})-1/n$. After passing to a subsequence, $\xi_{n}\to\xi$ weakly for some $\xi\in\cR(\nu)$. Then $\bS(G)=\lim_{m}\xi(G_{m})$ and for each~$m$ we have $\xi(G_{m})\geq \limsup_{n} \xi_{n}(G_{m})\geq \bS(G_{m})$; thus $\bS(G)\geq\limsup_{m}\bS(G_{m})$. The reverse holds by monotonicity.
\end{proof}

\begin{lemma}\label{le:IisCapacity}
  The mapping $\bI: [0,\infty]^{S} \to [0,\infty]$ is a $\cU$-capacity.
\end{lemma}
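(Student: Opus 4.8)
The plan is to verify the three defining properties of a $\cU$-capacity for $\bI$: monotonicity, sequential continuity upwards on all of $[0,\infty]^{S}$, and sequential continuity downwards on $\cU$. Monotonicity is immediate from the definition of $\cD(G)$: if $G\leq G'$ pointwise, then any $(M,\psi)\in\cD(G')$ also lies in $\cD(G)$, since the superhedging inequality $M+\psi(B)\geq G'\geq G$ persists; hence $\bI(G)\leq\bI(G')$. So the real content is the two continuity statements, and this is where Proposition~\ref{pr:closedness} does the heavy lifting.

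For continuity upwards, let $G^{k}\uparrow G$ in $[0,\infty]^{S}$. By monotonicity, $\bI(G^{k})\uparrow$ and $\lim_{k}\bI(G^{k})\leq\bI(G)$; the point is the reverse inequality. If $\sup_{k}\bI(G^{k})=\infty$ there is nothing to prove, so assume it is finite. Then for each $k$ we may pick $(M^{k},\psi_{k})\in\cD(G^{k})$ with $\nu(\psi_{k})\leq\bI(G^{k})+1/k\leq\sup_{j}\bI(G^{j})+1<\infty$, so that $\sup_{k}\nu(\psi_{k})<\infty$. Since $G^{k}\to G$ pointwise (monotonically, hence in particular pointwise), Proposition~\ref{pr:closedness} produces $(M,\psi)\in\cD(G)$ with $\nu(\psi)\leq\liminf_{k}\nu(\psi_{k})\leq\liminf_{k}\bI(G^{k})=\lim_{k}\bI(G^{k})$. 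Therefore $\bI(G)\leq\lim_{k}\bI(G^{k})$, giving equality.

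For continuity downwards on $\cU$, let $G^{k}\in\cU$ decrease to $G\in\cU$ (note $\cU$ is closed under countable infima, so $G\in\cU$). By monotonicity $\bI(G)\leq\inf_{k}\bI(G^{k})$, and we need the reverse. Here we invoke the duality for semicontinuous functions, Proposition~\ref{pr:strongDualityCont}: each $G^{k}$ is bounded and upper semicontinuous, so $\bI(G^{k})=\bS(G^{k})$, and likewise $\bI(G)=\bS(G)$. By Lemma~\ref{le:SisCapacity}, $\bS$ is a $\cU$-capacity, hence $\bS(G^{k})\downarrow\bS(G)$ as $k\to\infty$. Combining, $\bI(G^{k})=\bS(G^{k})\downarrow\bS(G)=\bI(G)$, which is the desired continuity downwards.

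The only point requiring care — and the natural candidate for the ``main obstacle'' — is the continuity-upwards step, because one must confirm that the hypotheses of Proposition~\ref{pr:closedness} are genuinely met: one needs the selected $(M^{k},\psi_{k})$ to have $\sup_{k}\nu(\psi_{k})<\infty$, which is why the reduction to the case $\sup_{k}\bI(G^{k})<\infty$ is made first, and one needs $G^{k}\to G$ pointwise, which holds since the convergence is monotone. Once Proposition~\ref{pr:closedness} is applied, the conclusion $\nu(\psi)\leq\liminf\nu(\psi_{k})$ delivers exactly the inequality needed. The downward continuity, by contrast, is essentially a free consequence of Proposition~\ref{pr:strongDualityCont} together with the fact that $\bS$ is already known to be a $\cU$-capacity.
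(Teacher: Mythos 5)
Your proposal is correct and follows essentially the same route as the paper: monotonicity is immediate, upward continuity is obtained by choosing near-optimal dual elements $(M^{k},\psi_{k})$ with $\nu(\psi_{k})\leq\bI(G^{k})+1/k$ and applying Proposition~\ref{pr:closedness} (after reducing to $\sup_{k}\bI(G^{k})<\infty$), and downward continuity on $\cU$ is inherited from $\bS$ via Proposition~\ref{pr:strongDualityCont} and Lemma~\ref{le:SisCapacity}. No gaps.
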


\begin{proof}
  As $\bI=\bS$ on $\cU$ by Proposition~\ref{pr:strongDualityCont}, Lemma~\ref{le:SisCapacity} already shows that~$\bI$ is sequentially continuous downwards on $\cU$. Let $G,G_{n}\in [0,\infty]^{S}$ be such that $G_{n}$ increases to $G$ pointwise; we need to show that $\bI(G_{n})\to \bI(G)$. It is clear that $\bI$ is monotone; in particular, $\bI(G)\geq \limsup \bI(G_{n})$, and $\bI(G_{n})\to \bI(G)$ if $\sup_{n} \bI(G_{n})=\infty$.
  
  Hence, we only need to show $\bI(G) \leq \liminf \bI(G_{n})$ under the condition that $\sup_{n} \bI(G_{n})<\infty$. Indeed, by the definition of $\bI(G_{n})$ there exist $(M^{n},\psi_{n})\in \cD(G_{n})$ with 
  $
    \nu(\psi_{n}) \leq \bI(G_{n}) +1/n.
  $
  Proposition~\ref{pr:closedness} then yields $(M,\psi)\in \cD(G)$ with 
  $
    \nu(\psi) \leq \liminf [\bI(G_{n}) +1/n],
  $
  showing that $\bI(G) \leq \liminf \bI(G_{n})$.
\end{proof}

We can now prove the main duality result. (Recall that dual attainment was already stated in Corollary~\ref{co:dualExistence}, without measurability assumptions.)

\begin{theorem}\label{th:duality}
  Let $G: S\to [0,\infty]$ be upper semianalytic. Then there is no duality gap: $\bS(G)= \bI(G) \in [0,\infty]$.
\end{theorem}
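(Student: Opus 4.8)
The plan is to deduce the theorem from Choquet's capacitability theorem, which is precisely why Lemmas~\ref{le:SisCapacity} and~\ref{le:IisCapacity} were proved. The inequality $\bS(G)\le\bI(G)$ is already Lemma~\ref{le:weakDuality}, so the entire remaining content is the reverse inequality; in fact capacitability delivers both at once. Since $\bS$ and $\bI$ are continuous from below on all of $[0,\infty]^{S}$, one may first reduce to bounded $G$ by writing $G=\lim_{n}(G\wedge n)$, each truncation being bounded and upper semianalytic, and passing $\bS=\bI$ to the limit; alternatively the functional form of the capacitability theorem handles $[0,\infty]$-valued functions directly, so this reduction is optional.

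The key step is the following. By Proposition~\ref{pr:strongDualityCont}, the two $\cU$-capacities $\bS$ and $\bI$ agree on the lattice $\cU$ of bounded upper semicontinuous functions, which is stable under countable infima. Choquet's capacitability theorem, in its functional form (see \cite[Ch.~7]{BertsekasShreve.78}, and the analogous application in \cite{BeiglbockNutzTouzi.15}), asserts that every upper semianalytic $G$ is $\cU$-capacitable for any $\cU$-capacity $\bC$, i.e.
$$
  \bC(G)=\sup\{\bC(f):\,f\in\cU,\ f\le G\}.
$$
Applying this to $\bS$ and to $\bI$ and using that they coincide on $\cU$ gives
$$
  \bS(G)=\sup\{\bS(f):\,f\in\cU,\ f\le G\}=\sup\{\bI(f):\,f\in\cU,\ f\le G\}=\bI(G),
$$
which is the claim.

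The point requiring care — the ``main obstacle'' of the write-up, though it is routine given the structure already in place — is the verification that upper semianalytic functions fall within the Choquet-capacitable class associated with the paving $\cU$. Concretely one uses that $S$ is Polish, so that the closed subsets (hence the superlevel sets $\{f\ge c\}$ of functions $f\in\cU$) generate $\cB(S)$ and the $\cU$-analytic sets coincide with the analytic subsets of $S$; together with the stability of $\cU$ under countable infima (noted when $\cU$ was introduced) and the capacity properties established in Lemmas~\ref{le:SisCapacity} and~\ref{le:IisCapacity}, the capacitability theorem applies verbatim and no further estimates are needed. All the genuinely hard analysis — the compactness of the martingale component, the closedness property, and the semicontinuous duality — has already been absorbed into Proposition~\ref{pr:closedness}, Lemma~\ref{le:IisCapacity}, and Proposition~\ref{pr:strongDualityCont}.
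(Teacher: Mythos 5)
Your proposal is correct and follows essentially the same route as the paper: Choquet's capacitability theorem applied to the two $\cU$-capacities $\bS$ and $\bI$ (Lemmas~\ref{le:SisCapacity} and~\ref{le:IisCapacity}), combined with their agreement on $\cU$ from Proposition~\ref{pr:strongDualityCont}, yields $\bS=\bI$ on the upper semianalytic functions. The only cosmetic difference is that the paper simply cites \cite[Proposition~2.11]{Kellerer.84} for the capacitability of upper semianalytic functions rather than spelling out the reduction you sketch, and it does not need the (optional) truncation step you mention.
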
 

\begin{proof}
 In view of Lemma~\ref{le:SisCapacity}, %
  Choquet's capacitability theorem (see for instance \cite[Proposition~2.11]{Kellerer.84}) shows that
  $$
    \bS(G)=\sup \{\bS(G'):\, G'\in \cU, \,G'\leq G\},\quad G\in\USA_{+}.
  $$
  By Lemma~\ref{le:IisCapacity}, the analogue holds for $\bI$, and hence the fact that $\bS=\bI$ on~$\cU$ by Proposition~\ref{pr:strongDualityCont} already implies that $\bS=\bI$ on $\USA_{+}$.
\end{proof}

We deduce the following characterization of primal and dual optimizers.

\begin{corollary}\label{co:optimizerEquality}
  Let $G: S\to [0,\infty]$ be upper semianalytic and $\bS(G)<\infty$. Given $(M,\psi)\in\cD(G)$ and $\xi\in\cR(\nu)$, the following are equivalent:
  \begin{enumerate}
  \item $(M,\psi)$ is optimal for $\bI(G)$ and $\xi$ is optimal for $\bS(G)$,
  \item $M + \psi(B) = G$ $\xi$-a.s.\ and $\xi(M)=0$,
  \item $M + \psi(B) = G$ $\xi$-a.s.\ and $\xi(M)\geq0$.
  \end{enumerate}
\end{corollary}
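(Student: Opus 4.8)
The plan is to prove the cyclic chain (i) $\Rightarrow$ (ii) $\Rightarrow$ (iii) $\Rightarrow$ (i), using the duality equality $\bS(G)=\bI(G)<\infty$ from Theorem~\ref{th:duality} together with the weak-duality building blocks of Lemmas~\ref{le:superhedUptoEmbedding} and~\ref{le:Msupermart}. The underlying observation is the chain of inequalities valid for \emph{any} admissible pair $(M,\psi)\in\cD(G)$ and \emph{any} $\xi\in\cR(\nu)$: by Lemma~\ref{le:superhedUptoEmbedding}(ii) we have $M+\psi(B)\ge G$ $\xi$-a.s., so integrating against $\xi$ gives $\nu(\psi)+\xi(M)\ge\xi(G)$ (using that $\psi(B)$ integrated against $\xi$ equals $\nu(\psi)$ because $\xi\circ B^{-1}=\nu$), and by Lemma~\ref{le:Msupermart}(ii), $\xi(M)\le0$; hence
\[
  \xi(G)\;\le\;\nu(\psi)+\xi(M)\;\le\;\nu(\psi).
\]
Since $\sup_{\xi}\xi(G)=\bS(G)=\bI(G)=\inf_{(M,\psi)}\nu(\psi)$, optimality of both $\xi$ and $(M,\psi)$ is equivalent to having equality throughout this display, i.e. $\xi(G)=\nu(\psi)$, $\xi(M)=0$, and the $\xi$-a.s.\ inequality $M+\psi(B)\ge G$ being in fact an equality (so that its integral against $\xi$ is not strict). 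This is exactly the content of the equivalences.

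Concretely: for (i) $\Rightarrow$ (ii), suppose $(M,\psi)$ is dual-optimal and $\xi$ is primal-optimal. Then $\xi(G)=\bS(G)=\bI(G)=\nu(\psi)$, so both inequalities in the display collapse to equalities; the right equality $\nu(\psi)+\xi(M)=\nu(\psi)$ gives $\xi(M)=0$, and then $\int(M+\psi(B)-G)\,d\xi=\nu(\psi)+\xi(M)-\xi(G)=0$ with a nonnegative (by Lemma~\ref{le:superhedUptoEmbedding}(ii)) integrand forces $M+\psi(B)=G$ $\xi$-a.s. The implication (ii) $\Rightarrow$ (iii) is trivial since $\xi(M)=0$ implies $\xi(M)\ge0$. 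For (iii) $\Rightarrow$ (i), assume $M+\psi(B)=G$ $\xi$-a.s.\ and $\xi(M)\ge0$. Integrating the equality against $\xi$ gives $\xi(G)=\nu(\psi)+\xi(M)\ge\nu(\psi)$, while weak duality (Lemma~\ref{le:weakDuality}) gives $\xi(G)\le\bS(G)\le\bI(G)\le\nu(\psi)$; combining, all these are equal, so $\xi(G)=\bS(G)$ and $\nu(\psi)=\bI(G)$, i.e.\ both are optimal (and incidentally $\xi(M)=0$).

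I do not anticipate a serious obstacle here; the argument is a standard complementary-slackness / no-duality-gap bookkeeping once Theorem~\ref{th:duality} is in hand. The one point requiring a little care is the use of outer integrals: since $G$ is only upper semianalytic, $\xi(G)=\int^{*}G\,d\xi$ is an outer integral, and one must check that the manipulation $\int(M+\psi(B)-G)\,d\xi=\nu(\psi)+\xi(M)-\xi(G)$ is legitimate. This is fine because $M+\psi(B)$ is a genuine (universally measurable, indeed optional) function with $\xi(M+\psi(B))=\nu(\psi)+\xi(M)$ finite, so subtracting the measurable part reduces the outer integral of the difference to the outer integral of $-G$; the nonnegativity of $M+\psi(B)-G$ $\xi$-a.s.\ (from Lemma~\ref{le:superhedUptoEmbedding}(ii)) then makes ``integral zero $\Rightarrow$ a.s.\ zero'' valid even in the outer sense (a nonnegative function with zero outer integral is $\xi$-a.s.\ zero). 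With that remark the proof is a few lines.
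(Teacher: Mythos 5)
Your proposal is correct and follows essentially the same route as the paper: combine the $\xi$-a.s.\ inequality $M+\psi(B)\geq G$ (Lemma~\ref{le:superhedUptoEmbedding}), the bound $\xi(M)\leq 0$ (Lemma~\ref{le:Msupermart}), and the duality $\bS(G)=\bI(G)$ of Theorem~\ref{th:duality} to force equality throughout the chain $\xi(G)\leq\nu(\psi)+\xi(M)\leq\nu(\psi)$, which is exactly the paper's complementary-slackness argument. (Your side remark on outer integrals is harmless but not needed: an upper semianalytic $G$ is universally measurable, so $\xi(G)$ is a genuine integral.)
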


\begin{proof}
  Theorem~\ref{th:duality} shows that
  \[
    \xi(\psi(B))=\nu(\psi)\geq \bI(G)=\bS(G)\geq \xi(G).
  \]
  Given~(iii), we have $\xi(\psi(B))\leq\xi(G)$ and thus the above must be equalities; that is, (i) holds.  Given~(i), these are again all equalities and then~(ii) follows after recalling that $M +\psi(B)\geq G$ $\xi$-a.s.\ and $\xi(M)\leq 0$; cf.\ Lemma~\ref{le:superhedUptoEmbedding} and Lemma~\ref{le:Msupermart}. The  implication from (ii) to (iii) is trivial.
\end{proof}

\begin{remark}\label{rk:lowerBound}
  The lower bound on $G$ in our main duality results can be relaxed to the following condition: there exist $\psi\in L^{1}(\nu)$ and a local martingale $M$ which is bounded on $[0,T_{n}]$ for all $n$ such that
  $$
    G \geq -M - \psi(B).
  $$
  Indeed, the stated results can then be applied to $G':= G+ M + \psi(B)\geq0$ to derive the corresponding assertions for~$G$.
\end{remark}

Next, we provide a monotonicity principle in the spirit of~\cite[Corollary~7.8]{BeiglbockNutzTouzi.15} which provides a universal support~$\Gamma$ characterizing all optimal embeddings, under an integrability condition on~$G$. As mentioned in the Introduction, this complements the monotonicity principle of~\cite{BeiglbockCoxHuesmann.14} which gives a geometric condition on the support that is necessary for optimality, but not sufficient. The following condition is necessary and sufficient. However, the geometry of~$\Gamma$ is merely described in a weaker form, through the construction via a suitable dual optimizer in~\eqref{eq:GammaDefn}. We will exemplify in Section~\ref{se:cave} how to exploit such a description. 
For the statement, note that while $\xi\in\cR$ is defined as a measure on $C_{0}(\R_{+})\times \R_{+}$, it naturally induces a measure on~$S$: for $\Gamma\in\cB(S)$, we set $\xi(\Gamma)=\xi\{(\omega,t)\in C_{0}(\R_{+})\times \R_{+}:\,  (\omega|_{[0,t]},t)\in\Gamma\}$.

\begin{corollary}\label{co:monotPrinciple}
  Let $G:S \to [0,\infty]$ be Borel and of class~(D). There exists a Borel set $\Gamma \subseteq S$ such that a randomized stopping time $\xi \in \cR(\nu)$ is optimal for $\bS(G)$ if and only if it is concentrated on $\Gamma$; i.e., $\xi(\Gamma)=1$.
\end{corollary}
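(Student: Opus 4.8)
The idea is to produce $\Gamma$ as the contact set between a dual optimizer and the reward functional, and then read off the characterization from Corollary~\ref{co:optimizerEquality}. Since $G$ is of class~(D), it is in particular integrable against every $\xi\in\cR(\nu)$ (as each such $\xi$ corresponds to an embedding with $E[\tau]=\int x^{2}\,d\nu<\infty$, and class~(D) gives uniform integrability of $\{G_\tau:\tau\in\cT\}$ in a suitable sense), so $\bS(G)<\infty$; this is the integrability hypothesis that makes the monotonicity principle meaningful. First I would invoke Remark~\ref{rk:lowerBound} to note that, although Definition~\ref{de:dualDomain} and our duality theorem were stated for nonnegative $G$, the class~(D) condition is exactly the kind of two-sided bound needed (or we may simply apply the results to $G$ directly since $G\geq0$ is already assumed in the statement), so Theorem~\ref{th:duality} and Corollary~\ref{co:dualExistence} apply: there exists a dual optimizer $(M,\psi)\in\cD(G)$ with $\nu(\psi)=\bI(G)=\bS(G)<\infty$.

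\textbf{Constructing $\Gamma$.} Next I would fix such an optimizer $(M,\psi)$ and define, on the canonical space,
\begin{equation}\label{eq:GammaDefn}
  \Gamma=\{(\omega,t)\in S:\, M_{t}(\omega)+\psi(B_{t}(\omega))=G(\omega,t)\}.
\end{equation}
Here one must be slightly careful: $M$ is only defined up to evanescence and only on $\cup_n[0,T_n]$, but since every $\xi\in\cR(\nu)$ concentrates on $\cup_n[0,T_n]$ by Lemma~\ref{le:superhedUptoEmbedding}, and $M_{\cdot\wedge T_n}$ is bounded below hence can be taken optional (even predictable) with Borel-measurable paths, the set $\Gamma$ is Borel as the coincidence set of two Borel functions on $S$ (after fixing Borel versions). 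I would record that $\Gamma$ is intrinsic to the chosen optimizer, which is all the statement requires.

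\textbf{The equivalence.} The heart is then a direct application of Corollary~\ref{co:optimizerEquality}. For the ``only if'' direction: if $\xi\in\cR(\nu)$ is optimal for $\bS(G)$, then since $(M,\psi)$ is optimal for $\bI(G)$, part~(i) of the corollary holds, so part~(ii) gives $M+\psi(B)=G$ $\xi$-a.s., which is precisely $\xi(\Gamma)=1$ (using that $\xi$ lives on $\cup_n[0,T_n]$ so the $\xi$-a.s.\ equality on the canonical space transfers to a statement about the Borel set $\Gamma\subseteq S$ via the induced measure $\xi(\Gamma)$). For the ``if'' direction: suppose $\xi(\Gamma)=1$, i.e.\ $M+\psi(B)=G$ $\xi$-a.s. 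By Lemma~\ref{le:Msupermart}(ii), $\xi(M)\leq0$; combined with the $\xi$-a.s.\ equality this gives $\xi(G)=\xi(M)+\xi(\psi(B))\leq\nu(\psi)=\bS(G)$, and since also $\xi(G)\leq\bS(G)$ trivially while $\xi(G)=\xi(M)+\nu(\psi)$ forces... here one needs $\xi(M)\geq0$ to conclude $\xi(G)=\bS(G)$; but $\xi(M)\geq 0$ need not hold a priori. The clean route is instead: the hypothesis $M+\psi(B)=G$ $\xi$-a.s.\ together with $\xi(M)\leq 0$ is condition~(iii) of Corollary~\ref{co:optimizerEquality} provided $\xi(M)\geq0$, so I must argue $\xi(M)=0$. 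This follows because $\xi(G)=\xi(M)+\nu(\psi)$ and we may compare with the optimal value: by weak duality $\xi(G)\leq\bS(G)=\nu(\psi)$ forces $\xi(M)\leq0$, while if $\xi(M)<0$ then $\xi(G)<\bS(G)$, and to derive a contradiction I would note that some optimizer $\xi^*$ of $\bS(G)$ exists (again by compactness of $\cR(\nu)$ and upper semicontinuity, or rather we only need that the value is attained, which holds since $\bS$ is a capacity and $G$ of class~(D) makes $\xi\mapsto\xi(G)$ upper semicontinuous on the compact set $\cR(\nu)$); for that optimizer $\xi^*(M)=0$ by the ``only if'' direction combined with part~(ii). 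Then $\xi(M)<0=\xi^*(M)$ is not yet a contradiction by itself, so the actual argument must be: since $\xi$ concentrates on $\Gamma$, we have $\xi(G)=\xi(M)+\nu(\psi)$; but also $G\leq M+\psi(B)$ everywhere on $\cup_n[0,T_n]$ (as $(M,\psi)\in\cD(G)$), so in fact $\xi(G)=\xi(M+\psi(B))$ already uses the equality, and the point is simply that $\xi(M)\le 0$ always, hence $\xi(G)\le\nu(\psi)=\bS(G)$, and equality in $\xi(G)\le\bS(G)$ must hold iff $\xi(M)=0$; since we have shown $\xi(G)=\xi(M)+\nu(\psi)$ and we want optimality, the correct statement is that $\xi(\Gamma)=1$ alone does \emph{not} immediately give optimality unless we also know $\xi(M)=0$.

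\textbf{The main obstacle.} The genuine subtlety, which I expect to be the crux, is exactly this: does $\xi(\Gamma)=1$ imply $\xi(M)=0$? I would handle it by the following sharpening. Because $M_{\cdot\wedge T_n}$ is bounded below and $\xi$ is an embedding of $\nu$, Remark~\ref{rk:sampling} and Fatou give $\xi(M)\leq 0$; to get equality when $\xi(\Gamma)=1$, observe that if $\xi(M)<0$ then $\xi(G)=\xi(M)+\nu(\psi)<\nu(\psi)=\bS(G)$, so $\xi$ is not optimal—but that is the conclusion we want, not a contradiction. Therefore the honest resolution is that the ``if'' direction as stated requires that every $\xi$ with $\xi(\Gamma)=1$ automatically has $\xi(M)=0$, and this is \emph{false in general}; the correct monotonicity principle (matching \cite[Corollary~7.8]{BeiglbockNutzTouzi.15}) must define $\Gamma$ more carefully, e.g.\ as the set where equality holds \emph{and} which is ``reachable'' by the supermartingale $Z$ with no loss, or one restricts to $\xi$ for which the stopping region is contained in a region where $A\equiv 0$ (no bubble in the Doob--Meyer/Mertens decomposition). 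Concretely, I would define $\Gamma$ using the Snell-envelope structure from the proof of Proposition~\ref{pr:strongDualityCont}: writing $Z=\nu(\psi^{**}\text{-part})+M-A$ as there, set $\Gamma=\{M+\psi(B)=G\}\cap\{\text{the path has not yet triggered }dA>0\text{ in a suitable sense}\}$, so that $\xi(\Gamma)=1$ forces both $M+\psi(B)=G$ $\xi$-a.s.\ and $\xi(M)=0$ (the latter because on $\Gamma$ the process $M$ is, along $\xi$, a true martingale up to the stopping time with no dropped mass). With $\Gamma$ so defined, the equivalence with optimality is immediate from Corollary~\ref{co:optimizerEquality}. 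I would make this precise in the write-up, but I flag it now as the step where care is essential and where a naive ``contact set'' definition of $\Gamma$ is insufficient for the ``if'' implication.
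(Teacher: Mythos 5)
You have correctly located the crux: with an arbitrary dual optimizer $(M,\psi)$, concentration on the contact set $\{M+\psi(B)=G\}$ does not by itself give $\xi(M)\geq0$, so condition~(iii) of Corollary~\ref{co:optimizerEquality} cannot be invoked. But your resolution of this difficulty is where the proposal breaks down, and it is exactly here that the class~(D) hypothesis must be used. The paper's proof does \emph{not} redefine $\Gamma$; it modifies the dual optimizer. Since $G$ is of class~(D), there is a class~(D) martingale $N$ with $G\leq N$ (Dellacherie--Meyer), which both gives $\bS(G)<\infty$ and, more importantly, allows one to replace a dual optimizer $M'$ (normalized so that $\psi\geq0$) by
\[
  M = M'\1_{[0,\tau]} + N\1_{[\tau,\infty)}, \qquad \tau=\inf\{t\geq0:\,M'_t=N_t\}.
\]
Because $N\geq G$ and $\psi\geq0$, the pair $(M,\psi)$ is still in $\cD(G)$ and still optimal, while $M^{+}$ is now of class~(D); Fatou's lemma with a uniformly integrable upper bound then yields $\xi(M)\geq0$ for \emph{every} $\xi\in\cR$, not only for optimizers. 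With this $M$, the naive contact set $\Gamma=\{M+\psi(B)=G\}$ does work, and the equivalence (i)$\Leftrightarrow$(iii) of Corollary~\ref{co:optimizerEquality} finishes the proof --- contrary to your conclusion that a contact-set definition is insufficient.

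Your substitute construction --- intersecting the contact set with a region ``where $dA>0$ has not been triggered'' for the increasing part $A$ of a Snell-envelope decomposition --- is not a proof. It is left at the level of a slogan (``in a suitable sense''), it would force you to re-establish the ``only if'' direction for the smaller set (you would have to show that every optimal $\xi$ avoids the excised region, which is not addressed), and it leans on the Snell-envelope structure from the proof of Proposition~\ref{pr:strongDualityCont}, which is only available for bounded upper semicontinuous rewards; for a general Borel class~(D) reward, Corollary~\ref{co:dualExistence} provides only an abstract optimizer with no such decomposition attached. A further (minor, since it sits in a discarded branch) flaw: your claim that a primal optimizer exists because $\xi\mapsto\xi(G)$ is upper semicontinuous on the compact set $\cR(\nu)$ is unjustified for merely Borel $G$; note that the paper's argument never needs primal attainment. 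So the proposal identifies the right obstacle but misses the key idea --- capping the dual martingale at the dominating class~(D) martingale $N$ --- and the alternative it sketches does not close the gap.
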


\begin{proof}
  Since $G$ is of class (D), there exists a class~(D) martingale $N$ such that $G\leq N$; cf.\ \cite[Appendix~I.24, p.\,419]{DellacherieMeyer.82}. In particular, $\xi(G)\leq \xi(N)=N_{0}$ for all $\xi\in\cR(\nu)$, showing that $\bS(G)<\infty$.
  Let $(M',\psi) \in \cD(G)$ be a dual optimizer as guaranteed by Corollary~\ref{co:dualExistence}, normalized
  such that $\psi \geq 0$. Define
  \[ 
    M = M'\1_{[0,\tau]} + N\1_{[\tau,\infty)} \quad \mbox{where}\quad  \tau = \inf\{t \geq 0:\,M'_t = N_{t}\}.
  \]
  Then $(M,\psi) \in \cD(G)$ is again a dual optimizer and as $M^{+}$ is of class~(D), Fatou's lemma (in its version with a uniformly integrable bound) yields that $\xi(M)\geq0$ for any $\xi\in\cR$.
  We set
  \begin{equation}\label{eq:GammaDefn}
    \Gamma:=\{M + \psi(B) = G\}\subseteq S,
  \end{equation}
  then the equivalence of (i) and (iii) in Corollary~\ref{co:optimizerEquality} shows that $\xi\in\cR(\nu)$ is optimal if and only if $\xi(\Gamma)=1$.
\end{proof}

In the above proof, the integrability of $G$ is used to infer that there exists a dual optimizer $(M,\psi)$ such that $\xi(M)=0$ for all $\xi\in\cR(\nu)$. We will see in Section~\ref{se:monPrinciple} that this need not hold when $G$ is not of class~(D), even if $\bS(G)<\infty$. More surprisingly, Corollary~\ref{co:monotPrinciple} and even the very essence of the monotonicity principle may fail: the optimality of an optimal embedding \emph{cannot be described through its support} in that case.

\section{Optimal Cave Embeddings}\label{se:cave}

In this section, we consider an optimal Skorokhod embedding problem introduced in~\cite{BeiglbockCoxHuesmann.14} where the (unique) optimal embedding is the hitting time  of a set that is cave-shaped; that is, consists of a left and a right barrier. There are typically infinitely many such caves that embed a given distribution $\nu$, leading to the question of how to characterize the optimal one. Our main purpose here is to exemplify how our result on dual existence can be useful in deriving a variational characterization. The characterization itself is similar to the one provided by Cox and Kinsley in~\cite{CoxKinsley.18} for a different class of reward functions. However, thanks to dual existence, our argument here is much more direct than the one of Cox and Kinsley who develop a discretization approach~\cite{CoxKinsley.18b} in order to reduce to finite-dimensional linear programming problems and obtain their result through tedious and delicate limiting arguments. More importantly, the proof given here suggests that \emph{the variational condition holds for much more general classes of embeddings,} an issue to be addressed in future work.

We first fix some terminology (see also \cite{BeiglbockCoxHuesmann.14, CoxKinsley.18, Obloj.04}). Let $(\R_{+}\times \R)^{*}=(\R_{+}\times \R)\setminus\{(0,0)\}$ be the punctured half-plane. Following Root's embedding, a \emph{right barrier} is a closed set $\cR\subseteq (\R_{+}\times \R)^{*}$ which is closed to the right; that is, $(s,x)\in\cR$ and $t\geq s$ imply $(t,x)\in\cR$. Such a barrier is characterized by a function $x\mapsto r(x)\in [0,\infty]$ which traces its left boundary,
$$
  r(x) = \inf\{t\geq 0:\, (t,x)\in\cR\},\quad \inf\emptyset :=+\infty.
$$
Similarly, in the spirit of Rost, a left barrier $\cL$ is a closed subset of $(\R_{+}\times \R)^{*}$ which is closed to the left. 
It is characterized by a function $x\mapsto l(x)\in \{-1\}\cup[0,\infty)$ where we now set
$$
  l(x) = \sup\{t\geq 0:\, (t,x)\in\cL\},\quad \sup\emptyset :=-1.
$$
Note that we use the value $-1$ for gaps in the left barrier.

\begin{definition}\label{de:cave}
  Given $t_{p}\in\R_{+}$, a \emph{cave barrier with parting} $t_{p}$ is a set $\cL\cup\cR$ where $\cL\subseteq [0,t_{p}]\times \R$ is a left barrier and $\cR\subseteq [t_{p},\infty)\times \R$ is a right barrier.
\end{definition}

Clearly, a cave barrier $\cL\cup\cR$ is characterized by two functions $l\leq t_{p}\leq r$. We denote by $D$ the (open) complement $(\R_{+}\times \R)^{*}\setminus (\cL\cup\cR)$ and refer to $D$ as the \emph{continuation region} of the cave barrier (or just as the cave, when there is no ambiguity). Let $\tau=\inf\{t\geq 0:\, (t,B_{t})\notin D\}$ be the first exit time of $D$. This is the minimum of the two stopping times
\begin{align*}
  \tau_{l} &= \inf\{t\geq 0:\, B_{t}\in \cL\} \in [0,t_{p}]\cup\{\infty\},\\
  \tau_{r} &= \inf\{t_{p}\leq t <\tau_{l}:\, B_{t}\in \cR\}\in [t_{p},\infty].
\end{align*}
Similarly, if $\nu=\Law(B_{\tau})$ is the measure embedded by the cave barrier, then $\nu=\nu_{l}+\nu_{r}$ is decomposed into the subprobabilities corresponding to mass absorbed at the left and right barrier; or more precisely, $\nu_{l}=\Law(B_{\tau_{l}})$ and $\nu_{r}=\Law(B_{\tau_{r}})$ with the convention that $B_{\infty}$ is valued in an external cemetery state and the laws are restricted to $\R$.

Two different cave barriers may have the same hitting time. First, consider a left barrier $\cL$ with corresponding function $l$. If $l$ has a ``sink'' on $(0,\infty)$, say, then $(t,B_{t})$ cannot hit that part of the boundary since the time coordinate always runs forward (see Figure~\ref{fi:caveRegular}). If we replace $l$ by the increasing envelope of $x\mapsto l(x)$ on $(0,\infty)$ and its decreasing envelope on $(-\infty,0)$, the new barrier has the same hitting time. We call $\cL$ \emph{monotone} if $l$ is already equal to this envelope. Note that given $\cL$, there exists a minimal monotone left barrier containing $\cL$. Next, consider a cave barrier and note that $(t,B_{t})$ can only hit the boundary of the component of $D$ that contains $(0,0)$. Thus, we
say that the cave is \emph{connected} if~$D$ is. For brevity we say that a cave barrier is \emph{regular} if~$\cL$ is monotone and $D$ is connected, and note that every cave barrier has a minimal regular cave barrier containing it.
\begin{figure}
\subfigure[~]{\label{fi:a}
\begin{minipage}[l]{0.482\textwidth}
\hspace{-5pt}\vspace{-3.5pt}\resizebox{\textwidth}{!}{
\begin{tikzpicture}
\draw[domain=-4.2:-1,smooth,variable=\y] plot ({0.2*(-7 + 13*\y*\y + 7*\y*\y*\y + \y*\y*\y*\y)},{\y});
\fill[lightgray,domain=-4.2:-1,smooth,variable=\y] plot ({0.2*(-7 + 13*\y*\y + 7*\y*\y*\y + \y*\y*\y*\y)},{\y}) -- (0,-4.2)--(3,-4.2);

\draw[domain=1.4:2.7,smooth,variable=\y] plot ({0.5*\y*\y-1)},{\y});
\fill[lightgray,domain=1.4:2.7,smooth,variable=\y] plot ({0.5*\y*\y-1)},{\y}) -- (0,2.7)--(0,1.4);
\draw[domain=-4.2:2.7,smooth,variable=\y] plot ({-1/(\y-3.2)+17-0.2*(\y+7)*(\y+7)+2*\y},{\y});
\fill[lightgray,domain=-4.2:2.7,smooth,variable=\y] plot ({-1/(\y-3.2)+17-0.2*(\y+7)*(\y+7)+2*\y},{\y}) -- (10,2.7)--(10,-4.2);

\fill[gray,domain=-3.79:-2,smooth,variable=\y] plot ({0.2*(-6.98 + 13*\y*\y + 7*\y*\y*\y + \y*\y*\y*\y)},{\y}) -- (1,-3.79);

\draw (1,-2) -- (1,-3.8) {};

\draw (0,-4.5) -- (0,3) {};
\draw (3,-4.5) -- (3,3) {};
\draw (-0.2,-4.2) -- (10,-4.2) {};
\draw (-0.2,2.7) -- (10,2.7) {};
\draw (-0.2,0) -- (10,0) {};

\node at (-0.7,-4.2) {$x_{min}$};
\node at (-0.7,2.7) {$x_{max}$};
\node at (-0.7,0) {$0$};
\node at (3.3,-4.5) {$t_{p}$};

\node at (0.5,2.2) {$\cL$};
\node at (0.5,-2) {$\cL$};
\node at (9,-2) {$\cR$};
\end{tikzpicture}
}
\end{minipage}
}
\subfigure[~]{\label{fi:b}
\begin{minipage}[c]{0.475\textwidth}
\resizebox{\textwidth}{!}{
\begin{tikzpicture}
\draw (0,-4.5) -- (0,3) {};
\draw (-0.2,-4.2) -- (10,-4.2) {};
\draw (-0.2,2.7) -- (10,2.7) {};
\draw (-0.2,-0.75) -- (0,-0.75) {};

\draw (3,-4.2) -- (3,-2.475) -- (-0.2,-2.475) {};
\fill[lightgray] (3,-4.2) -- (3,-2.475) -- (0,-2.475) -- (0,-4.2) {};
\draw (2,2.7) -- (3,0.975) -- (-0.2,0.975) {};
\fill[lightgray] (3,2.7) -- (3,0.975) -- (0,0.975) -- (0,2.7) {};

\draw (6,0.975) -- (10,0.975) {};
\draw (6,-2.475) -- (10,-2.475) {};

\node at (-0.7,0.975) {$1$};
\node at (-0.7,2.7) {$2$};
\node at (-0.7,-2.475) {$-1$};
\node at (-0.7,-4.2) {$-2$};
\node at (-0.7,-0.75) {$0$};
\end{tikzpicture}
}
\end{minipage}
}
\caption{(a) The dark area is added to the left barrier $\cL$ to make it monotone. (b) Example of Remark~\ref{rk:caveNonUnique}.}
\label{fi:caveRegular}
\vspace{-1em}%
\end{figure}
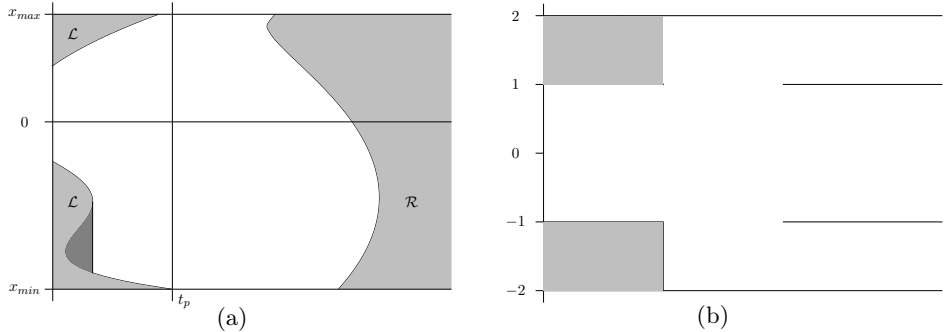
 This notion is important due to the following fact: the first hitting times of two regular caves are a.s.\ equal if and only if the caves are equal. We omit the details and refer instead to \cite[p.\,365]{Obloj.04} and the end of the proof of \cite[Theorem~2.4]{BeiglbockCoxHuesmann.14} for analogous and detailed discussions.

It is also useful to note that an interval of constancy of $l$ corresponds to an interval where $\nu_{l}$ has no mass. On the other hand, $\nu_{r}$ has no mass on an interval where $r=\infty$. Finally, an atom in $\nu$ is generated by a horizontal portion in the boundary of $\cL$ or $\cR$; that is, a discontinuity of $l$ or $r$.

\begin{remark}\label{rk:caveNonUnique}
  Cave embeddings for a given distribution $\nu$ are trivially non-unique because any cave can be regularized without changing the embedded distribution. However, in contrast to Root and Rost embeddings, \emph{cave embeddings are non-unique even if this regularity is imposed}. 
  
  In essence,  this ambiguity arises because a given ``piece'' of $\nu$ can be absorbed either by the left or the right boundary. A simple example can be generated by taking $\nu=(\delta_{-2}+\delta_{-1}+\delta_{1}+\delta_{2})/4$, so that the left and right barriers are (the envelope of) horizontal spikes at these four locations; cf.\ Figure~\ref{fi:caveRegular}. One can shorten the spikes on the right at $\pm1$ and suitably enlarge the ones on the left, thus changing $\nu_{l}$ and $\nu_{r}$ without altering the embedded distribution $\nu=\nu_{l}+\nu_{r}$.
\end{remark}

Next, we turn to optimal Skorokhod embeddings for a reward function $G_{t}=g(t)$ that is deterministic and depends only on the time variable. More specifically, we assume the following.

\begin{assumption}\label{as:convexConcave}
  The reward function $G_{t}=g(t)$ is given by a bounded, Lipschitz continuous function $g: \R_{+}\to \R$ of time such that for some $t_{p}\geq0$, $g$ is differentiable on $(t_{p},\infty)$ and\footnote{The decrease/increase can be replaced by: $\partial^{+}g(s)<g'(t)$ for all $0\leq s<t_{p}<t$.}
\begin{align*}
    \mbox{$g$ is strictly convex and strictly decreasing}\quad \mbox{on}\quad &(0,t_{p}),\\
    \mbox{$g$ is strictly concave and strictly increasing}\quad \mbox{on}\quad & (t_{p},\infty).
  \end{align*}  
\end{assumption}

We also define $g(\infty)$ as the obvious limit. Such reward functions give rise to cave embeddings as follows.

\begin{proposition}\label{pr:caveOptimal}
  Suppose that $\nu(\{0\})=0$ and that $g$ satisfies Assumption~\ref{as:convexConcave}. There exists an optimal stopping time $\tau\in\cT(\nu)$; that is, we have $E[g(\tau)]=\sup_{\xi\in\cR(\nu)} \xi(g(\bt))$. Moreover, $\tau$ is the unique optimizer within~$\cR(\nu)$ and given by the first hitting time of a regular cave barrier with parting~$t_{p}$. The cave is unique among all regular caves. Finally, we have $P\{\tau=t\}=0$ for all $t\in[0,t_{p}]$.
\end{proposition}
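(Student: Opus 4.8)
\emph{Overview and existence of a primal optimizer.} The plan is to obtain existence and the cave structure from \cite[Theorem~2.4]{BeiglbockCoxHuesmann.14} (or, more in the spirit of the present paper, from the dual optimizer), to upgrade uniqueness from $\cT(\nu)$ to all of $\cR(\nu)$ by an extreme-point argument, and to rule out atoms in $[0,t_{p}]$ using the \emph{strict} convexity of $g$. For existence: since $G_{t}=g(t)$ is bounded and continuous, $\xi\mapsto\xi(g(\bt))=\int g\,d\xi$ is weakly continuous on the weakly compact set $\cR(\nu)$ (cf.\ \cite[Theorem~3.14]{BeiglbockCoxHuesmann.14}), so $\bS(G)=\sup_{\xi\in\cR(\nu)}\xi(g(\bt))$ is finite and attained; after the harmless shift of Remark~\ref{rk:lowerBound}, Corollary~\ref{co:dualExistence} also provides a dual optimizer $(M,\psi)$.

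\emph{Cave structure.} By \cite[Theorem~2.4]{BeiglbockCoxHuesmann.14} and its proof --- whose monotonicity principle constrains \emph{every} optimizer --- any optimizer in $\cR(\nu)$ is concentrated on the left boundary of a regular cave barrier with parting $t_{p}$, and since the time coordinate runs forward, concentration on such a boundary forces the stopping rule to be non-randomized and to coincide with the cave's first hitting time. (Alternatively, one can reprove this using the dual optimizer: as $g$ is Markovian, $M$ may be taken of the form $M_{t}=u(t,B_{t})$, so the set $\Gamma$ of \eqref{eq:GammaDefn} is the trace in $S$ of $\{(t,x):u(t,x)+\psi(x)=g(t)\}$, and the convex--concave splitting of $g$ at $t_{p}$ forces this set to be a cave boundary with parting $t_{p}$; Corollary~\ref{co:monotPrinciple} then identifies the optimizers.) In particular there is an optimal $\tau\in\cT(\nu)$ of the asserted form; as $\nu(\{0\})=0$, no optimizer stops at time $0$ with positive probability, so $\tau>0$ a.s.

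\emph{Uniqueness.} Let $\xi^{1},\xi^{2}\in\cR(\nu)$ be optimal and set $\bar\xi:=\tfrac12(\xi^{1}+\xi^{2})$. The conditions defining $\cR(\nu)$ (namely $\bar\xi(\bt)<\infty$ and $\bar\xi\circ B^{-1}=\nu$) are affine, so $\bar\xi\in\cR(\nu)$, and $\bar\xi$ is optimal by linearity of $\xi\mapsto\xi(g(\bt))$. By the previous step $\bar\xi_{\omega}=\delta_{\bar\tau(\omega)}$ for a.e.\ $\omega$, where $\bar\tau$ is a regular-cave hitting time. Since a Dirac mass is an extreme point of the probability measures on $\R_{+}$ and $\delta_{\bar\tau(\omega)}=\tfrac12\xi^{1}_{\omega}+\tfrac12\xi^{2}_{\omega}$, we get $\xi^{1}_{\omega}=\xi^{2}_{\omega}=\delta_{\bar\tau(\omega)}$ for a.e.\ $\omega$, hence $\xi^{1}=\xi^{2}$. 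So the optimizer $\tau$ is unique in $\cR(\nu)$; and two regular caves with optimal hitting times would have a.s.\ equal hitting times and hence be equal (a regular cave is determined by its hitting time, cf.\ \cite[p.\,365]{Obloj.04} and the proof of \cite[Theorem~2.4]{BeiglbockCoxHuesmann.14}), giving uniqueness of the optimal cave among regular caves.

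\emph{Absence of atoms in $[0,t_{p}]$, and the main obstacle.} For $t=0$ the claim is immediate from $\nu(\{0\})=0$. Fix $t_{0}\in(0,t_{p})$ and suppose $\W\{\tau=t_{0}\}>0$. On $[0,t_{p})$ the cave's hitting time is the left-barrier hitting time $\tau_{l}$, and a short analysis of the monotone boundary function $l$ shows that $\W\{\tau_{l}=t_{0}\}>0$ forces $l$ to be flat at value $t_{0}$ over a nondegenerate interval $I$ --- equivalently, the cave contains a vertical wall $\{t_{0}\}\times I$ --- across which a positive fraction of paths is absorbed at exactly time $t_{0}$. I would then use the strict convexity of $g$ on $(0,t_{p})$: tilting this wall into a strictly monotone piece of boundary, i.e.\ absorbing the same spatial mass over a short time window around $t_{0}$ with unchanged mean absorption time (for instance by using a short post-$t_{0}$ increment of the path as a randomization device as in Proposition~\ref{pr:RandomGenerator}), produces via Jensen's inequality a competitor in $\cR(\nu)$ with strictly larger reward, contradicting optimality of $\tau$; the endpoint $t_{0}=t_{p}$ is handled analogously using the behaviour of $g$ near $t_{p}$. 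The step I expect to be the main obstacle is precisely this tilt: it must be carried out so that the \emph{spatial} law of $B_{\tau}$ is \emph{exactly} preserved, whereas a crude tilt reroutes some paths to neighbouring absorption levels and perturbs $\nu$; I would compensate for this by coupling the tilt with a small perturbation of the barrier away from $I$, selected by a continuity/implicit-function argument, so that only the timing is affected. All remaining steps are either citations or soft convexity and extreme-point arguments.
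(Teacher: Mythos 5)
Your first three steps coincide with the paper's own route: existence and the cave/hitting-time structure are quoted from Beiglb\"ock--Cox--Huesmann, uniqueness in $\cR(\nu)$ follows by the Loynes-type mixing argument (the $\tfrac12$-mixture is optimal, hence itself a non-randomized cave hitting time, hence both components equal it), and uniqueness of the cave comes from the bijection between regular caves and their hitting times. (Your parenthetical alternative via a Markovian dual optimizer would be circular inside this paper, since Proposition~\ref{pr:markovianM} is proved using the primal optimizer furnished by the present proposition; but as you only offer it as an aside, the citation-based route stands.)

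The genuine gap is the last assertion, $P\{\tau=t\}=0$ for $t\in[0,t_{p}]$. For $t_{0}\in(0,t_{p})$ your mechanism is wrong: a stretch where $l\equiv t_{0}$ on an interval $I$ is a vertical wall of the \emph{left} barrier, and such a wall absorbs no mass whatsoever. The spatial sections $\cL_{s}$ of a left barrier decrease in $s$, so a path unstopped before $t_{0}$ has $B_{s}\notin\cL_{s}\supseteq\cL_{t_{0}}$ for all $s<t_{0}$; by path continuity $B_{t_{0}}$ can meet $\cL_{t_{0}}$ only at a boundary point of that section, and for a monotone $l$ there are at most two such points, an event of $\W$-probability zero. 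Thus the claim for $t<t_{p}$ holds for \emph{every} cave, optimal or not (this is the paper's one-line observation that ``the left barrier cannot give rise to an atom''), and your tilting/Jensen perturbation is not only unnecessary but also unfinished on its own terms, since the $\nu$-preservation step that you yourself identify as the main obstacle is never carried out. Conversely, the endpoint $t_{0}=t_{p}$, which you dismiss as ``handled analogously,'' is the only case that genuinely needs optimality and a real argument: there the \emph{right} barrier can have a mass-absorbing vertical wall, and $g$ is neither strictly convex nor differentiable across $t_{p}$, so no Jensen-type comparison is available. The paper excludes this atom with the monotonicity principle of Beiglb\"ock--Cox--Huesmann: if a positive mass stops at $(t_{p},x)$ with $x\in(x_{min},x_{max})$, then $l(x)<t_{p}$, so the support set $\Gamma$ and its going counterpart $\Gamma^{<}$ contain two stopped paths at the same level $x$ with times $t'<t_{p}$ and $t_{p}$, which by \eqref{eq:stopGoCave} form a stop-go pair, contradicting \eqref{eq:BCHmonotPrinciple}. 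Your proposal contains no substitute for this step, so the atom at $t_{p}$ is not ruled out.
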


\begin{proof}
  The first two assertions are stated in \cite[Theorem~2.5]{BeiglbockCoxHuesmann.14} which is itself a direct consequence of the monotonicity principle in~\cite[Theorem~1.3]{BeiglbockCoxHuesmann.14}. The latter states that if $\tau\in\cT(\nu)$ is optimal, then $(B_{\cdot\wedge\tau},\tau)\in\Gamma$ $\W$-a.s.\ for a Borel set $\Gamma\subseteq S$ satisfying 
\begin{equation}\label{eq:BCHmonotPrinciple}
  \mathsf{SG} \cap (\Gamma^{<}\times\Gamma)=\emptyset,
\end{equation}
where $\Gamma^{<}\subseteq S$ is defined by
$$
  \Gamma^{<}=\{(f',t')\in S:\, t'< t \mbox{ and } f'=f\mbox{ on }[0,t']\mbox{ for some }(f,t)\in \Gamma\}
$$
and $\mathsf{SG}$ is  the set of so-called stop-go pairs; cf.\ \cite[Definition 1.4]{BeiglbockCoxHuesmann.14}. In the present setting, due to the convexity and monotonicity properties of $g$, 
  \begin{equation}\label{eq:stopGoCave}
    \mathsf{SG}= \{((f,t),(f',t'))\in S\times S:\, f(t)= f'(t'),\, t'<t\leq t_{p} \mbox{ or }t_{p}\leq t <t'\}.
  \end{equation}
  (In \cite{BeiglbockCoxHuesmann.14} there are further assumptions on $g$ which are, however, not used in the proofs.) \cite[Theorem~2.5]{BeiglbockCoxHuesmann.14} also states that every optimal $\xi\in\cR(\nu)$ is the hitting time of a cave, and that implies uniqueness of $\tau$ similarly as in~\cite{Loynes.70}. Indeed, if $\tau_{1}$ and $\tau_{2}$ are optimizers we can define a randomized stopping time $\tau'$ by independently picking $\tau_{1}$ or $\tau_{2}$ with probability~$1/2$. But then $\tau'$ is optimal and it follows that $\tau'$ is non-randomized and thus $\tau_{1}=\tau_{2}$ a.s. The uniqueness of the cave holds because regular caves are in one-to-one correspondence with their hitting times, as noted above.
  
   For $t<t_{p}$ we have $P\{\tau=t\}=0$ since the left barrier cannot give rise to an atom in $\tau$. To see that $P\{\tau=t_{p}\}=0$, consider a measurable set $\Gamma\subseteq S$ with $P\{(B_{\cdot\wedge\tau},\tau)\in\Gamma\}=1$ and suppose that $P\{\tau=t_{p}\}>0$. Then $t_{p}>0$ and $\Gamma$ must contain a path $f$ which is stopped at $t_{p}$ and satisfies $x:=f(t_{p})\in (x_{min},x_{max})$. Thus, $l(x)<t_{p}$, and we have $(t',x)\in D$ for all $l(x)<t'<t_{p}$. It follows that there exists a stopped path $(f',t')\in \Gamma^{<}$ with $f'(t')=x$ and $t'<t_{p}$, but then $(f,t_{p})$ and $(f',t')$ form a stop-go pair by~\eqref{eq:stopGoCave} and now~\eqref{eq:BCHmonotPrinciple} yields a contradiction.
\end{proof}

The previous proposition leaves open how to \emph{characterize} the functions $l,r$ corresponding to the optimal barrier. Intuitively, the non-uniqueness of caves embedding $\nu$ stems from the fact that a given piece of $\nu$ can be absorbed at either of the two barriers. However, transferring mass from one to the other changes the reward $E[g(\tau)]$, and that is the basis of a variational characterization. Consider an optimal cave for $g$ at a point $x\in\R$ where both barriers absorb mass, or more precisely, $x\in\supp \nu_{l}\cap\supp\nu_{r}$. Intuitively, deforming $l$ and $r$ locally around $x$ corresponds to transferring mass from one boundary to the other, and if the cave is optimal, the derivative corresponding to this variation should vanish. Of course we cannot absorb a negative mass, so that if $x\in\supp \nu_{l}\setminus \supp \nu_{r}$ or vice versa, only one-sided variations are possible. Thus, the precise statement in Theorem~\ref{th:GammaCondition} below will consist of inequality conditions for each of the supports, amounting to an equality only on the intersection. 

It seems difficult to find a tractable parametrization for all variations of a cave that preserve the embedded distributions. Instead, we shall utilize the dual problem; a formal derivation runs as follows. Consider for simplicity the case $x\in\supp \nu_{l}\cap\supp\nu_{r}$ and recall that the dual problem admits a solution $(M,\psi)$. Since $g$ is Markovian (a function of time and state), one may expect that the martingale $M$ can also be chosen of the Markovian form $M_{t}=m(t,B_{t})$. Moreover, the dual solution satisfies $M_{\tau}+\psi(B_{\tau})=g(\tau)$ where $\tau$ is the exit time of~$D$, which roughly translates to $m(t,x)=g(t)-\psi(x)$ for $t\in\{l(x),r(x)\}$. Assuming a \emph{smooth fit} at the boundary, formally taking derivatives yields $\partial_{t}m(t,x)=g'(t)$ for $t\in\{l(x),r(x)\}$. Since~$M$ is a martingale, $m$ is a solution of the heat equation and then so is $\partial_{t}m$. A version of the  Feynman--Kac formula now yields that $\partial_{t}m(t,x)=E_{t,x}[g'(\tau)]$ for $l(x)\leq t\leq r(x)$. Thus, the difference
\begin{align*}
  g(r(x))-g(l(x)) &= m(r(x),x)+\psi(x)-m(l(x),x)-\psi(x)\\
  &= m(r(x),x)-m(l(x),x)
\end{align*}
can be expressed as $\int_{l(x)}^{r(x)} \partial_{t} m(t,x)\,dt= \int_{l(x)}^{r(x)} E_{t,x}[g'(\tau)]\,dt$.
The identity 
$$
  g(r(x))-g(l(x))= \int_{l(x)}^{r(x)} E_{t,x}[g'(\tau)]\,dt
$$
no longer refers to the dual solution. As mentioned above, this equation needs to be weakened to an inequality if~$x$ is not in the support of both measures~$\nu_{l},\nu_{r}$. 

In what follows we assume throughout that $\nu$ is a centered distribution with finite second moment and $\nu(\{0\})=0$. Some technical aspects of the proof depend on whether $\nu$ has atoms at the endpoints of its support. We focus on the case where $\nu$ has atoms at two endpoints: $\nu$ is concentrated on a compact interval $J=[x_{min},x_{max}]$ with $\nu(x_{min})>0$ and $\nu(x_{max})>0$. %
It is worth noting that given these properties of $\nu$, the regular caves under discussion satisfy $l(x)=t_{p}=r(x)$ at the two endpoints and the necessary jumps of $l$ and $r$ imply horizontal portions of $\partial D$ along $\{x=x_{min}\}$ and $\{x=x_{max}\}$; flat portions of floor and ceiling containing~$t_{p}$, so to speak. While $D$ is of bounded height, it can be unbounded to the right since $r(x)=\infty$ is a possible value.

In the following result we use the right derivative $\partial^{+}g$, but the same holds for the left derivative.

\begin{theorem}\label{th:GammaCondition}
  Let $D$ be a regular cave with parting $t_{p}$ embedding~$\nu$ and let $l,r$ be the functions delimiting $D$. Then $\tau=\inf\{t\geq 0:\, (t,B_{t})\notin D\}$ is the unique optimizer for $g$ in $\cR(\nu)$ if and only if
  \begin{align*}
    \int_{l(x)}^{r(x)} E_{t,x}[\partial^{+}g(\tau)]\,dt &\geq g(r(x)) - g(l(x)) \quad\mbox{for}\quad x\in\supp \nu_{l},\\
    \int_{l(x)}^{r(x)} E_{t,x}[\partial^{+}g(\tau)]\,dt &\leq g(r(x)) - g(l(x)) \quad\mbox{for}\quad x\in\supp \nu_{r},
  \end{align*}
  where $E_{t,x}[\partial^{+}g(\tau)]:=E[\partial^{+}g(\tau_{t,x})]$ for $\tau_{t,x}:=\inf\{s\geq t: \, (s,x+B_{s-t})\notin D\}$.
\end{theorem}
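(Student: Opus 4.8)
The strategy is to exploit dual existence (Corollary~\ref{co:dualExistence}) together with the Markovian structure of $g$ to reduce the abstract dual martingale to a function of two variables, then read off the variational condition from the contact set $\Gamma=\{M+\psi(B)=G\}$ of Corollary~\ref{co:monotPrinciple}. Throughout, $g$ is bounded (hence of class~(D)), so Corollary~\ref{co:monotPrinciple} applies: there is a Borel set $\Gamma\subseteq S$ such that $\xi\in\cR(\nu)$ is optimal iff $\xi(\Gamma)=1$, and by Proposition~\ref{pr:caveOptimal} the optimizer is unique and equals the hitting time of \emph{some} regular cave. So the theorem amounts to: a given regular cave $D$ embedding $\nu$ is \emph{the} optimal one iff the two integral inequalities hold.

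\textbf{Sufficiency (verification).} Assume the inequalities. I would build an explicit dual pair $(M,\psi)\in\cD(g(\bt))$ with $M_t=m(t,B_t)$ and show $M+\psi(B)=g(\bt)$ on the graph of $\tau$, which by the equivalence of (iii) and (i) in Corollary~\ref{co:optimizerEquality} forces $\tau$ to be optimal (here $\bS(g)<\infty$ and $\xi(M)\ge 0$ must be arranged, e.g.\ by the truncation trick of Corollary~\ref{co:monotPrinciple}'s proof). Concretely: on the complement of $D$ set $m(t,x)=g(t\wedge\text{(exit value)})-\psi(x)$ so equality holds there; inside $D$ let $m$ solve the heat equation with this boundary data. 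The natural candidate is $m(t,x)=E_{t,x}[g(\tau)]-\psi(x)$ for a suitable $\psi$, and the superhedging inequality $m(t,x)+\psi(x)\ge g(t)$ inside $D$ should follow from the convex/concave structure of $g$ across $t_p$ exactly as in~\eqref{eq:stopGoCave}. The role of $\psi$ is to make $M_0=0$ and to absorb the spatial part; the key computation, via a Feynman--Kac argument for $\partial_t m$ (which also solves the heat equation and equals $E_{t,x}[g'(\tau)]$ by the boundary identity $\partial_t m(t,x)=g'(t)$ coming from smooth fit), shows that the defining boundary relation $m(r(x),x)-m(l(x),x)=g(r(x))-g(l(x))$ holds \emph{on the support of both $\nu_l$ and $\nu_r$}; off those supports the one-sided inequalities in the hypothesis are precisely what is needed to maintain $m+\psi\ge g$ without the barrier being charged there. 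I expect the delicate points to be: (a) justifying smooth fit / $C^1$-regularity of $m$ across the free boundary, which is where boundedness and the strict convexity/concavity of $g$ enter, and (b) handling the flat portions of $\partial D$ at $x_{\min},x_{\max}$ and the possibility $r(x)=\infty$ (using $g(\infty)$ and the atoms at the endpoints).

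\textbf{Necessity.} Conversely, suppose $\tau$ is optimal. By Corollary~\ref{co:dualExistence} and~\ref{co:monotPrinciple} there is a dual optimizer $(M,\psi)$ with $M+\psi(B)=g(\bt)$ $\W$-a.s.\ on the graph of $\tau$ and $\xi(M)=0$ for all $\xi\in\cR(\nu)$. The crux is to upgrade $M$ to Markovian form: since the reward is a function of $(t,B_t)$ and the optimal stopping region is a cave, one expects $M_t=m(t,B_t)$ for $l(x)\le t\le r(x)$; I would argue this by a projection/averaging argument (replace $M_\sigma$ by its conditional expectation given $(\sigma,B_\sigma)$ over the family of exit times, using uniqueness of the optimizer and the strong Markov property, in the spirit of the reduction announced in the Introduction ``the abstract martingale $M$ in the dual can be replaced by a function of two variables''). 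Once $m$ is Markovian and $C^{1,2}$ inside $D$, it solves the heat equation, $\partial_t m$ does too, and the contact identity $m(t,x)+\psi(x)=g(t)$ at $t\in\{l(x),r(x)\}$ (valid for $x$ where that boundary is actually hit, i.e.\ on $\supp\nu_l$ resp.\ $\supp\nu_r$) gives $\partial_t m=g'$ there by smooth fit; Feynman--Kac then yields $\partial_t m(t,x)=E_{t,x}[\partial^+g(\tau)]$, and integrating in $t$ from $l(x)$ to $r(x)$ produces the claimed equality on $\supp\nu_l\cap\supp\nu_r$. On $\supp\nu_l\setminus\supp\nu_r$ only $t=l(x)$ is a true contact time while at $t=r(x)$ one has the inequality $m(r(x),x)+\psi(x)\ge g(r(x))$ (superhedging, no equality), producing $g(r(x))-g(l(x))\le\int_{l(x)}^{r(x)}\partial_t m\,dt$, i.e.\ the first displayed inequality; symmetrically on $\supp\nu_r$.

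\textbf{Main obstacle.} The genuinely hard part is the regularity/smooth-fit step: establishing that the Markovian dual value $m$ is $C^1$ up to the free boundaries $l$ and $r$ so that $\partial_t m=g'$ there, and justifying the Feynman--Kac representation $\partial_t m(t,x)=E_{t,x}[\partial^+g(\tau)]$ globally on $D$ (in particular near the flat pieces of $\partial D$ and as $x\to x_{\min},x_{\max}$, and accommodating $r(x)=\infty$). This is exactly the place where Assumption~\ref{as:convexConcave} — boundedness and strict convexity/concavity of $g$ with the single parting $t_p$ — is used, and it is the analogue of the ``smooth fit for free boundary problems'' heuristic flagged in the Introduction; everything else is soft probabilistic bookkeeping with the duality results already in hand.
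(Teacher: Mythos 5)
Your overall architecture (dual existence, a Markovian dual optimizer $M_t=m(t,B_t)$, reading the variational condition off the contact set, and a verification argument for sufficiency) is the same as the paper's, but the necessity half of your proposal has two genuine gaps at exactly the steps that carry the proof. First, the Markovian reduction: you propose to take an arbitrary dual optimizer $(M,\psi)$ and ``project'' $M_\sigma$ onto $\sigma(\sigma,B_\sigma)$ over a family of exit times. Conditioning preserves the pointwise inequality $M_\sigma+\psi(B_\sigma)\geq g(\sigma)$ at each fixed $\sigma$, but it does not produce a single function $m(t,x)$ such that $m(t,B_t)$ is again a local martingale (or even a supermartingale) bounded below on $[0,T]$: since $\sigma(B_s)\not\subseteq\sigma(B_t)$, the projected quantities at different times need not fit together into a process in the dual domain, and your sketch gives no mechanism for this. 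The paper does something different: keep only $\psi$ from an arbitrary dual optimizer and \emph{construct a new} dual element as the Snell envelope of the Markovian optimal stopping problem with reward $g(t)-\psi(x)$ (Proposition~\ref{pr:markovianM}); Markovian structure, optimality of $\tau$ for that stopping problem, and $M=m(\bt,B)$ on $[0,\tau]$ then come from Mertens/El Karoui--Lepeltier--Millet theory, and absence of a duality gap (Theorem~\ref{th:duality}) is what makes the Snell envelope start at $0$, so the pair is again a dual optimizer. This representation of $m$ as an optimal stopping value is not a convenience---it is what powers the next step.

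Second, the identity $\partial_t m(t,x)=E_{t,x}[\partial^{+}g(\tau)]$: you derive it from $C^1$-regularity of $m$ up to the free boundary, smooth fit $\partial_t m=g'$ at $t\in\{l(x),r(x)\}$, and a Feynman--Kac representation with that boundary data, and you yourself flag this as the hard part without offering an argument. This is precisely the route the paper deliberately avoids, because for general $\nu$ the barriers $l,r$ are non-smooth (jumps, flat pieces, $r(x)=\infty$), no smooth-fit or boundary-regularity result is available, and contact with $g-\psi$ is only known $\nu_l$-a.e.\ resp.\ $\nu_r$-a.e., which is not enough boundary data to identify $\partial_t m$ via Feynman--Kac. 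Instead, Lemma~\ref{le:mAndgDerivatives} gets the identity \emph{in the interior} $l(x)<t<r(x)$ by a soft comparison: starting from $(t_1,x)$, use the exit time of the time-translated region $D+\{(t_1-t,0)\}$ as a suboptimal stopping time; since the translated exit leaves $\psi(B_\cdot)$'s law unchanged, this gives $m(t_1,x)-m(t,x)\geq E_{t,x}[g(\tau+t_1-t)-g(\tau)]$ and the reverse bound with $t_1<t$, and dividing by $h$ sandwiches $\partial_t m$ between $E_{t,x}[\partial^{-}g(\tau)]$ and $E_{t,x}[\partial^{+}g(\tau)]$. The necessity conclusion then only integrates $\partial_t m$ over $(l(x),r(x))$ and uses boundary \emph{contact} of $m+\psi$ with $g$ ($\nu_l$-/$\nu_r$-a.e., extended to the supports by continuity, with $r(x)=\infty$ handled by $r_n\uparrow r(x)$ and $\partial^{+}g\geq0$ past $t_p$); no boundary derivatives are ever taken. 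Without a substitute for this translation argument, your necessity proof does not go through. Your sufficiency sketch is acceptable at the plan level and matches the paper's verification strategy, though the real work there (the construction of $\psi$ via $\Gamma(x)^{+}$ and an additive-functional representation yielding a convex $z$, following Cox--Kinsley) is also left unaddressed.
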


We use our setup for the dual problem from Section~\ref{se:dualProblem} with
$$
  T_{n}=T:=\inf \{t\geq0: B_{t}\in \partial J\}, \quad n\geq1.
$$
Since in this setting we are not interested in the dual martingale $M$ beyond time $T$, we redefine $\cD(G)$ slightly in the sense that $M$ is only defined up to $T$. Clearly, this does not affect the previous results, as one can trivially extend $M$ beyond $T$ in a constant fashion to retrieve a dual element in the previous sense.

Next, we consider the optimal cave $D$ and focus on the necessity of the variational condition. The first step in our proof is to construct the function $m(t,x)$. Importantly, we represent $m$ through an optimal stopping problem that will be used to derive the crucial relationship for $\partial_{t} m$.
We write $I$ for the interior of $J$; that is, $I=(x_{min},x_{max})$.

\begin{proposition}\label{pr:markovianM}
  There exists a dual optimizer $(M,\psi)\in\cD(G)$ with $M_{t}=m(t,B_{t})$ on $[0,\tau]$ for a universally measurable function $m: \R_{+}\times J\to\R\cup\{-\infty\}$ which is $C^{\infty}$ on $D$. Moreover, $m$ can be taken to be the value function of the optimal stopping problem with reward $g(t)-\psi(x)$.
\end{proposition}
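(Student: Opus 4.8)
The plan is to build the Markovian martingale as the martingale part of a Snell envelope. Start from a dual optimizer $(M^{*},\psi)\in\cD(G)$, which exists by Corollary~\ref{co:dualExistence} since $G=g(\bt)$ is bounded (so $\bI(G)\le\|g\|_{\infty}$); by Remark~\ref{rk:shift} we may assume $\psi\ge0$. Recall that here $T_{n}=T=\inf\{t:B_{t}\in\partial J\}$ and that $\nu$ has atoms at $x_{\min},x_{\max}$, so $\psi(x_{\min}),\psi(x_{\max})<\infty$. Define
\[
  m(t,x):=\sup_{\sigma}E\big[g(t+\sigma)-\psi(x+W_{\sigma})\big],\qquad (t,x)\in\R_{+}\times J,
\]
where $W$ is a Brownian motion started at $0$ and $\sigma$ ranges over stopping times bounded by $\inf\{s:x+W_{s}\in\partial J\}$; this is the value function of the optimal stopping problem with reward $g(t)-\psi(x)$. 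Testing with the strategy ``stop at the exit of $J$'' shows $-c\le m\le\|g\|_{\infty}$ for some constant $c$, and $m$ is universally measurable as the value function of an optimal stopping problem (cf.\ \cite{BertsekasShreve.78}). By the strong Markov property of $B$, the Snell envelope $Z_{t}:=\operatorname*{ess\,sup}_{t\le\sigma\le T}E[g(\sigma)-\psi(B_{\sigma})\mid\cF_{t}]$ of the reward $Y:=g(\bt)-\psi(B)$ satisfies $Z_{t}=m(t,B_{t})$ on $[0,T]$; being bounded it is of class (D), and its Doob--Meyer decomposition $Z=Z_{0}+M-A$ has $M$ a continuous (we are in the Brownian filtration) uniformly integrable martingale, $A$ a predictable nondecreasing process, $M_{0}=A_{0}=0$.

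Next I would show $Z_{0}=0$ and that $(M,\psi)\in\cD(G)$ is again optimal. For any $\sigma\le T$, Remark~\ref{rk:sampling} gives $E[Y_{\sigma}]\le E[M^{*}_{\sigma}]\le M^{*}_{0}=0$, hence $Z_{0}\le0$. On the other hand, let $\tau$ be the hitting time of the optimal regular cave, the primal optimizer of Proposition~\ref{pr:caveOptimal}; applying Corollary~\ref{co:optimizerEquality} to $(M^{*},\psi)$ and $\xi^{\tau}$ yields $M^{*}_{\tau}+\psi(B_{\tau})=g(\tau)$ a.s.\ and $E[M^{*}_{\tau}]=\xi^{\tau}(M^{*})=0$, so $Y_{\tau}=M^{*}_{\tau}$ and $E[Y_{\tau}]=0$; since $\tau\le T$ this gives $Z_{0}\ge E[Y_{\tau}]=0$, whence $Z_{0}=0$. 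Then $M+\psi(B)\ge Z+\psi(B)\ge Y+\psi(B)=g(\bt)$ on $[0,T]$ and $M\ge Z-Z_{0}\ge-c$ is bounded below, so $(M,\psi)\in\cD(G)$, and it is optimal because $\nu(\psi)=\bI(G)$.

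It remains to identify $M$ with $m(\cdot,B)$ on $[0,\tau]$ and to prove the regularity on $D$. Since $E[A_{\tau}]\le E[A_{T}]=Z_{0}-E[Z_{T}]<\infty$ and $E[M_{\tau}]=0$, we get $E[Z_{\tau}]=Z_{0}+E[M_{\tau}]-E[A_{\tau}]=-E[A_{\tau}]\le0$; but $Z_{\tau}\ge Y_{\tau}$ and $E[Y_{\tau}]=0$, so $E[Z_{\tau}]=0$, hence $A_{\tau}=0$ a.s.\ and, $A$ being nondecreasing, $A\equiv0$ on $[0,\tau]$. Therefore $M_{t}=Z_{t}-Z_{0}+A_{t}=Z_{t}=m(t,B_{t})$ on $[0,\tau]$. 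In particular $Z_{\cdot\wedge\tau}=M_{\cdot\wedge\tau}$ is a martingale, so $\tau$ is an optimal stopping rule and, while in $D$, it is never strictly suboptimal to continue; consequently the value function $m$ has the caloric mean--value property on $D$ (by the dynamic programming principle, $m(t_{0},x_{0})$ equals the expectation of $m$ at the exit of any space--time cylinder with closure in $D$), which, together with the local boundedness of $m$, makes $m$ a solution of the backward heat equation $\partial_{t}m+\tfrac12\partial_{xx}m=0$ on $D$; hypoellipticity of the heat operator then upgrades this to $m\in C^{\infty}(D)$. By construction $m$ is the value function of the optimal stopping problem with reward $g(t)-\psi(x)$, as claimed.

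I expect the genuine difficulty to lie precisely in this last step: transferring the abstract, filtration--level identity $Z=m(\cdot,B)$ into a statement about the deterministic function $m$ on the open set $D$ --- in particular, identifying the continuation region of the optimal stopping problem with the cave $D$ and deriving the caloric mean--value property --- and then obtaining $C^{\infty}$ regularity for a function that is a priori only universally measurable and locally bounded, all in the presence of the irregular boundary datum $\psi\in L^{1}(\nu)$, which makes $Y$ neither continuous nor semicontinuous so that the usual smooth--data optimal--stopping theory does not apply directly.
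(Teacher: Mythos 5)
Your proposal follows the same route as the paper's proof: take a dual optimizer $(M^{*},\psi)$ with $\psi\geq0$, consider the optimal stopping problem with reward $g(t)-\psi(x)$, show its value at the origin is zero by combining weak duality (every $\sigma\leq T$ gives $E[g(\sigma)-\psi(B_{\sigma})]\leq E[M^{*}_{\sigma}]\leq0$) with the primal optimizer $\tau$ and the absence of a duality gap, take the martingale part of the Snell envelope as the new dual martingale, identify it with $m(\cdot,B)$ on $[0,\tau]$ because the envelope is a martingale up to $\tau$ (your $A\equiv0$ on $[0,\tau]$ computation is a fine substitute for the paper's phrasing), and obtain $C^{\infty}$ regularity on $D$ from space--time harmonicity (the paper convolves $v|_{\partial R}$ with the exit distribution of a rectangle $R\subset\subset D$; your mean--value--plus--hypoellipticity argument is the same idea).

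The genuine gap is at the Snell-envelope stage, which you dispatch with ``by the strong Markov property'' and ``Doob--Meyer''. The reward $Y_{t}=g(t)-\psi(B_{t})$ is only Borel in the space variable, possibly $-\infty$, and in particular neither right-continuous nor bounded below. Consequently: (a) the family $Z_{t}=\operatorname{ess\,sup}_{\sigma}E[Y_{\sigma}\mid\cF_{t}]$ does not automatically aggregate into a c\`adl\`ag supermartingale, so the classical Doob--Meyer theorem does not apply; one only gets a strong (optional) supermartingale and must use the Mertens decomposition \cite[Appendix~I.20]{DellacherieMeyer.82}, whose hypotheses (class~(D), reward bounded below) are exactly what the paper manufactures by truncating the reward at $-\psi^{**}(B_{t})$ --- a truncation that preserves the envelope because $-\psi^{**}$ is itself the continuation value of the subproblem with reward $-\psi$, and is bounded because $\nu$ charges both endpoints of $J$. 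Your constant bound $-c$ (stop at the exit of $J$) bounds the value but does not by itself place the problem in a framework where the envelope and its decomposition exist. (b) The identity $Z_{t}=m(t,B_{t})$ with $m$ universally measurable is not a routine consequence of the strong Markov property when the reward is merely measurable; it is the content of \cite[Theorem~3.4]{ElKarouiLepeltierMillet.92} and \cite[Theorem~3]{Mertens.73} on strongly supermedian functions, and it is precisely where the universal measurability in the statement comes from. Relatedly, you locate the main difficulty in the final smoothness step, but in the paper that step is soft; the heavy machinery sits in the Mertens/El Karoui--Lepeltier--Millet theory that your sketch implicitly invokes without justification. With the truncation and these citations inserted, your argument becomes the paper's proof.
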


\begin{proof} 
  Let $(\tilde{M},\psi)\in\cD(G)$ be any dual optimizer; cf.\ Corollary~\ref{co:dualExistence}. By Remark~\ref{rk:shift} we may assume that $\psi\geq0$. Let $\cT_{T}=\{\sigma\in\cT:\,\sigma\leq T\}$ and consider the optimal stopping problem 
  \begin{equation}\label{eq:MarkovOptStop1}
    \sup_{\sigma\in\cT_{T}} E[G^{\psi}_{\sigma}], \quad G^{\psi}_{t}:=g^{\psi}(t,B_{t}), \quad g^{\psi}(t,x):=g(t)-\psi(x).
  \end{equation}
  Below, it will be useful to see this as a Markovian optimal stopping problem in the sense of Mertens (we use the completed Brownian filtration throughout this proof). Indeed, setting $Y_{t}=(t,B_{t})$ we can define a process $\bar{Y}$ as the process $Y$ with absorption on the complement of $\R_{+}\times I$ in $\R_{+}\times \R$. Then~\eqref{eq:MarkovOptStop1} is equivalent to an infinite-horizon optimal stopping problem for the right-continuous Markov process $\bar{Y}$. 

  Next, we show that we can truncate $G^{\psi}$ from below without changing the value of~\eqref{eq:MarkovOptStop1}. Since $g$ is bounded we may assume that $g\geq0$.
  We then have $g(t)-\psi(x)\geq -\psi(x)$ and the value of $\sup_{\sigma\in\cT_{T}} E[-\psi(B_{\sigma})]$ with initial condition $B_{t}=x\in I$ is given by $-\psi^{**}(x)$, where $\psi^{**}\geq0$ is the convex hull of $\psi$ on the interval $J$. Thus, the value of the optimal stopping problem
  \begin{equation}\label{eq:MarkovOptStop2}
    \sup_{\sigma\in\cT_{T}} E[\bar{G}^{\psi}_{\sigma}], \quad \bar{G}^{\psi}_{t}:=\bar{g}^{\psi}(t,B_{t}),\quad \bar{g}^{\psi}(t,x) := (g(t)-\psi(x)) \vee (-\psi^{**}(x))
  \end{equation}  
  is the same as~\eqref{eq:MarkovOptStop1}. We have $\nu(\psi^{**})\leq \nu(\psi)<\infty$ by the definition of~$\cD(G)$ and due to the assumed form of $\nu$ this implies that $\psi^{**}$ is finite at the endpoints of the compact interval $J$; that is, $\psi^{**}$ is a bounded function and thus $\bar{g}^{\psi}(t,x)$ is bounded from below. As a result, the reward function $\bar{G}^{\psi}$ in~\eqref{eq:MarkovOptStop2} is of class~(D) and optional (but not necessarily right-continuous), putting us in the setting of \cite{Mertens.72, Mertens.73, ElKarouiLepeltierMillet.92}.
  
  Consider the Snell envelope $S$ of~\eqref{eq:MarkovOptStop2} as in \cite[Theorem~T4]{Mertens.72} or \cite{ElKarouiLepeltierMillet.92}; that is, $S$ is the minimal strong supermartingale satisfying $S\geq \bar{G}^{\psi}$ on $[0,T]$, and $S$ has the property that $S_{0}=E[S_{0}]=\sup_{\tau\in\cT_{T}} E[\bar{G}^{\psi}_{\tau}]$.
  (We use the symbol~$S$ as in the cited references; there should be no possibility of confusion with the space of stopped paths.)
  Noting that $\bar{G}^{\psi}$ is bounded, it follows that $S_{\cdot\wedge T}$ is bounded.
  As $(\tilde{M},\psi)\in\cD(G)$, the local martingale $\tilde{M}$ is another supermartingale satisfying $\tilde{M}\geq G^{\psi}\geq\bar{G}^{\psi}$, so that $S\leq \tilde{M}$ by minimality and in particular $S_{0}\leq \tilde{M}_{0}=0$. On the other hand, let $\tau$ be the primal optimizer. Then
  $$
   S_{0}\geq E[\bar{G}^{\psi}_{\tau}]\geq E[G_{\tau}-\psi(B_{\tau})]=E[G_{\tau}]-\nu(\psi)=0
  $$
  since $\tau\in\cT_{T}$ and there is no duality gap; cf.\ Theorem~\ref{th:duality}. As a result, $S_{0}=0$ and $\tau$ is an optimal stopping time for~\eqref{eq:MarkovOptStop2} and~\eqref{eq:MarkovOptStop1}. In particular, $S$ is a martingale on $[0,\tau]$.
   
  The strong supermartingale $S$ has a Mertens decomposition 
  $$
    S= M - A
  $$
  where $M$ is a local martingale, $M_{\cdot\wedge T}$ is of class~(D), $A$ is a predictable increasing process and $M_{0}=A_{0}=0$; cf.\ \cite[Appendix~I.20, p.\,414]{DellacherieMeyer.82}. We observe that $(M,\psi)\in\cD(G)$ is a dual optimizer. Indeed, the defining property of the Snell envelope shows that $M_{t}\geq S_{t}\geq G_{t}-\psi(B_{t})$ on $[0,T]$ and $M$ is uniformly bounded from below on $[0,T]$ since $M_{t}\geq S_{t}\geq-\psi^{**}(B_{t})$ as above. Finally, the optimality property depends only on $\psi$.
   
  The optimal stopping problem~\eqref{eq:MarkovOptStop2} is of a Markovian form as considered in \cite{ElKarouiLepeltierMillet.92, Mertens.73}. More precisely, \cite[Theorem~3.4]{ElKarouiLepeltierMillet.92} or \cite[Theorem~3]{Mertens.73} show that the Snell envelope is given by $S_{t}=v(t,B_{t})$ where $v$ is a universally measurable function, the smallest supermedian-valued function exceeding $\bar{g}^{\psi}$ (as defined below \cite[Theorem~1]{Mertens.73}), and that $v$ coincides with the value function of the optimal stopping problem in the Markovian sense. Since $S$ is a martingale on $[0,\tau]$, we have $M=S=v(\bt,B)$ on $[0,\tau]$ and thus $m:=v$ satisfies $m(t,B_{t})=M_{t}$ on $[0,\tau]$.

The smoothness of $m$ on $D$ follows from the martingale property of $v$ on~$D$ that is implied by the optimal stopping problem. Indeed, for $(t,x)\in D$, consider an open rectangle $R$ around $(t,x)$ whose closure is contained in $D$. Then $v|_{R}$ is smooth as it is the convolution with a smooth kernel, namely, $v(t,x)$ is the convolution of $v|_{\partial R}$ with the exit distribution of $(\bt,B)$ when $B$ is started at $(t,x)$.
\end{proof}

We can observe that the preceding arguments extend to more general functions $g(t,x)$ with a spatial dependence.

The following result connects $\partial_{t}m$ and $\partial^{+}g$ without going through a delicate smooth fit condition as in the sketch above. Instead, it exploits the representation of~$m$ through optimal stopping, and that is crucial in view of the barrier being non-smooth (for general~$\nu$).

\begin{lemma}\label{le:mAndgDerivatives}
  Let $x\in I$. Then 
  \begin{equation*}
    \partial_{t}m(t,x)=E_{t,x}[\partial^{+}g(\tau)]=E_{t,x}[\partial^{-}g(\tau)] \quad \mbox{for} \quad l(x)<t<r(x).
  \end{equation*}
\end{lemma}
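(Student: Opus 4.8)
The plan is to exploit the representation of $m$ as the value function of the Markovian optimal stopping problem \eqref{eq:MarkovOptStop2} on the continuation region $D$, where $m$ is smooth, so that $\partial_t m$ itself is a classical solution of the heat equation. Fix $x\in I$. For $(t,x)\in D$ the optimal stopping time is the exit time $\tau_{t,x}=\inf\{s\ge t:\,(s,x+B_{s-t})\notin D\}$ of the cave; on $\{(l(x),r(x))\}\times\{x\}$ this is the vertical exit, and since $m(s,x)$ equals the reward $g(s)-\psi(x)$ at the two boundary points $s=l(x)$ and $s=r(x)$ (by Corollary~\ref{co:optimizerEquality}, as $(M,\psi)$ and $\tau$ are jointly optimal, so $M_\tau+\psi(B_\tau)=G_\tau$ $\W$-a.s., hence on the cave's boundary), I would first record that $m(s,x)=g(s)-\psi(x)$ for $s\in\{l(x),r(x)\}$.

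Next, I would differentiate the martingale representation in time. Since $M_t=m(t,B_t)$ on $[0,\tau]$ is a local martingale and $m\in C^\infty(D)$, the function $m$ satisfies the backward heat equation $\partial_t m+\tfrac12\partial_{xx}m=0$ on $D$. Hence $w:=\partial_t m$ is again a smooth solution of the heat equation on $D$, so $w(t,B_{t\wedge\tau})$ is a local martingale; because $g$ is Lipschitz, $g'$ is bounded on $(t_p,\infty)$ and $\partial^{\pm}g$ is bounded overall, so by dominated convergence one can show $w$ is bounded on the relevant portion of $D$ (the cave is of bounded height, and near the right end $g'$ decays), which upgrades $w(t,B_{t\wedge\tau})$ to a true (uniformly integrable) martingale on $[t,\tau_{t,x}]$ started at $(t,x)$. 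Optional stopping then gives
\[
  \partial_t m(t,x)=w(t,x)=E_{t,x}\big[w(\tau_{t,x},x+B_{\tau_{t,x}-t})\big].
\]
At the exit time, the process $(\tau_{t,x},x+B_{\cdot})$ hits the boundary of $D$, and one must identify $w$ there. On the left barrier portion $\{(l(y),y)\}$ and the right barrier portion $\{(r(y),y)\}$, differentiating the boundary identity $m(s,y)=g(s)-\psi(y)$ \emph{along the vertical direction} $s\mapsto m(s,y)$ — which is legitimate because for each fixed $y$ with $l(y)<r(y)$ the segment $(l(y),r(y))\times\{y\}$ lies in $D$ and $m$ extends continuously with the boundary value — yields $\partial_t m(l(y),y)=\partial^{+}g(l(y))$ and $\partial_t m(r(y),y)=\partial^{-}g(r(y))$ wherever $g$ is differentiable, i.e.\ on $(t_p,\infty)$; at the parting level and on the flat floor/ceiling segments along $x=x_{min},x_{max}$ one uses one-sided derivatives, and the almost-sure exit lands in the set where the appropriate one-sided derivative of $g$ equals $\partial_t m$. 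Substituting into the optional-stopping identity gives $\partial_t m(t,x)=E_{t,x}[\partial^{+}g(\tau)]$, and the same computation with left derivatives gives $E_{t,x}[\partial^{-}g(\tau)]$; they coincide because $g'$ exists (hence $\partial^+g=\partial^-g$) everywhere on $(t_p,\infty)$ and the exit time $\tau$ is a.s.\ positive and in fact a.s.\ lands in $(t_p,\infty)$ or on a flat portion, on which the corresponding one-sided derivatives of $g$ agree as well.

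The main obstacle I anticipate is the boundary analysis: making rigorous that $w=\partial_t m$ extends to the boundary of $D$ with the claimed values $\partial^{\pm}g$, given that the cave barrier is merely closed (no smoothness of $l,r$ assumed). One cannot differentiate $m$ normal to an irregular boundary. The trick is to differentiate only in the \emph{time} direction along the vertical fiber $\{x\}\times(l(x),r(x))\subseteq D$, where interior smoothness of $m$ already holds and the only limiting step is $s\downarrow l(x)$ or $s\uparrow r(x)$; there I would use the optimal-stopping characterization $m(s,x)=\sup_{\sigma\le T}E_{s,x}[\bar g^\psi(\sigma,B_\sigma)]$ together with the fact that the exit time from $D$ started just inside a vertical fiber tends to $0$, plus Lipschitz continuity of $g$ and boundedness of $\psi^{**}$, to get the continuous extension $m(l(x),x)=g(l(x))-\psi(x)$ and an $O(s-l(x))$ control on the increment that identifies the one-sided time-derivative at the endpoint. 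A secondary technical point is justifying the uniform integrability of $w(t,B_{t\wedge\tau_{t,x}})$ on the possibly unbounded-to-the-right cave, handled by noting $\partial^+g$ is bounded and $\tau_{t,x}<\infty$ $\W$-a.s.\ since $\tau$ embeds the integrable $\nu$.
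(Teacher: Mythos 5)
Your route is the formal ``smooth fit'' argument that the paper explicitly sketches before Theorem~\ref{th:GammaCondition} and then deliberately avoids, and the step where it breaks is exactly the one you flag yourself: identifying the boundary values of $w=\partial_t m$ with $\partial^{\pm}g$. Your proposed fix (differentiating the identity $m(s,y)=g(s)-\psi(y)$ ``along the vertical fiber'') does not deliver this. On the fiber $\{y\}\times(l(y),r(y))$ you only know the \emph{inequality} $m(s,y)\geq g(s)-\psi(y)$, with equality at the endpoint(s); differentiating an inequality with equality at an endpoint yields at best a one-sided bound such as $\partial_t m(l(y)+,y)\geq \partial^{+}g(l(y))$, never the equality that smooth fit asserts. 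Moreover, the endpoint equality $m(l(y),y)=g(l(y))-\psi(y)$ is only available $\nu_l$-a.e.\ (resp.\ $\nu_r$-a.e.\ at the right barrier), $m$ and $w$ are only known to be smooth in the open set $D$ with no continuity up to the (in general irregular) barrier, and the exit point $(\tau_{t,x},B_{\tau_{t,x}})$ has spatial coordinate different from $x$, so you need this boundary identification at essentially every level $y$, including horizontal portions of the barrier where a ``time derivative of the boundary identity'' is not even meaningful. The optional-stopping identity $\partial_t m(t,x)=E_{t,x}[w(\tau,B_\tau)]$ likewise presupposes that $w$ extends continuously to the boundary and is integrable there, neither of which is established (your boundedness claim for $w$ is asserted, not proved). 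So the core of the lemma is missing.

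The paper's proof sidesteps all boundary regularity: for $l(x)<t<t_1<r(x)$ it compares $m(t_1,x)$ with the suboptimal stopping time obtained by translating the optimal continuation region $D$ to the right by $t_1-t$; since the translated exit time has the same spatial exit law, the $\psi$-terms cancel and one gets $m(t_1,x)-m(t,x)\geq E_{t,x}[g(\tau+h)-g(\tau)]$ with $h=t_1-t$, and symmetrically $m(t,x)-m(t-h,x)\leq E_{t,x}[g(\tau)-g(\tau-h)]$. Dividing by $h$, using interior differentiability of $m$ and dominated convergence (with $\partial^{-}g\leq\partial^{+}g$) squeezes $\partial_t m(t,x)$ between $E_{t,x}[\partial^{-}g(\tau)]$ and $E_{t,x}[\partial^{+}g(\tau)]$, giving the claimed equalities with no smooth-fit or boundary-continuity input. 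If you want to salvage your approach you would have to prove a smooth-fit principle at a barrier with no regularity, which is precisely the ``delicate'' point the lemma is designed to avoid; I recommend switching to a translation/comparison argument of the above type.
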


\begin{proof}
  Let $x\in I$ and $l(x)< t<t_{1}<r(x)$. Following the proof of Proposition~\ref{pr:markovianM}, $m(t,x)$ is the value function of the stopping problem started at $(t,x)$ and the first exit time $\tau$ from $D$ is optimal. Define a stopping time $\tau_{1}$ for the initial condition $(t_{1},x)$ as the first exit time of the region $D_{1}:=D+\{(t_{1}-t,0)\}$; that is, the region $D$ translated to the right by $t_{1}-t$. Then $E_{t_{1},x}[\psi(B_{\tau_{1}})] = E_{t,x}[\psi(B_{\tau})]$ and hence
  \begin{align*}
     m(t_{1},x)-m(t,x) 
     &\geq E_{t_{1},x}[g(\tau_{1})-\psi(B_{\tau_{1}})] - E_{t,x}[g(\tau)-\psi(B_{\tau})] \\
     & = E_{t_{1},x}[g(\tau_{1})] - E_{t,x}[g(\tau)] \\
     & = E_{t,x}[g(\tau+t_{1}-t)] - E_{t,x}[g(\tau)] \\
     & = E_{t,x}[g(\tau+h) - g(\tau)]
  \end{align*}
  where $h=|t_{1}-t|$. Similarly, using $t_{1}<t$ yields $m(t,x)-m(t-h,x)\leq E_{t,x}[g(\tau) - g(\tau-h)]$. Recall that $m$ is differentiable. Dividing by $h$ and letting $h\downarrow 0$, dominated convergence then yields 
  $$
    \partial_{t}m(t,x)\leq  E_{t,x}[\partial^{-}g(\tau)] \leq E_{t,x}[\partial^{+}g(\tau)]\leq \partial_{t}m(t,x)
  $$
  where the middle inequality holds due to $\partial^{-}g\leq\partial^{+}g$; cf.\ Assumption~\ref{as:convexConcave}. The claim follows. We remark that $E_{t,x}[\partial^{-}g(\tau)] = E_{t,x}[\partial^{+}g(\tau)]$ can also be deduced directly from the properties of $\tau$; cf.\ Proposition~\ref{pr:caveOptimal}.
\end{proof}

\begin{proof}[Proof of Theorem~\ref{th:GammaCondition}, Necessity.]
  Let $D$ be optimal. For $x\in \{x_{min},x_{max}\}$ the result is trivial as $l(x)=r(x)$. 
  Given the properties of $l,r$ and $g$, it then suffices to show that 
    \begin{align*}
    \int_{l(x)}^{r(x)} E_{t,x}[\partial^{+}g(\tau)]\,dt &\geq g(r(x)) - g(l(x)) \quad\mbox{for}\quad \nu_{l}\mbox{-a.e.\ } x\in I,\\
    \int_{l(x)}^{r(x)} E_{t,x}[\partial^{+}g(\tau)]\,dt &\leq g(r(x)) - g(l(x)) \quad\mbox{for}\quad \nu_{r}\mbox{-a.e.\ } x\in I.
  \end{align*}
  Let $x\in I$ and $l(x)\leq l \leq r\leq r(x)$ with $r<\infty$. In view of Lemma~\ref{le:mAndgDerivatives} and the fundamental theorem of calculus,
  $$
    \int_{l}^{r} E_{t,x}[\partial^{+}g(\tau)]\,dt = \int_{l}^{r} \partial_{t}m(t,x)\, dt = m(r,x)-m(l,x).
  $$
  By the construction of $m$ we have $m(t,x)\geq g(t,x)-\psi(x)$ on $D$, while for $t=l(x)$ this inequality holds with equality for $\nu_{l}$-a.e.\ $x$ and for $t=r(x)$ it holds with equality for $\nu_{r}$-a.e.\ $x$.
  Focusing on the first case, we obtain $\nu_{l}$-a.e.\ that
  $$
    \int_{l(x)}^{r_{n}} E_{t,x}[\partial^{+}g(\tau)]\,dt \geq g(r_{n})-g(l(x))
  $$
  for a sequence $r_{n}\uparrow r(x)$, and the claim follows after recalling that $g$ is continuous on $[0,\infty]$ and $E_{t,x}[\partial^{+}g(\tau)]\geq 0$ for $t\geq t_{p}$. The $\nu_{r}$-a.e.\ inequality follows similarly.
\end{proof}

\begin{proof}[Proof of Theorem~\ref{th:GammaCondition}, Sufficiency]
  We only provide a sketch of the argument; the details are similar to the proof of \cite[Theorem~4.1]{CoxKinsley.18}. Suppose that the stated inequalities hold for $l,r$.
  Our goal is to construct a pair $(M,\psi)\in\cD(G)$ such that $M_\tau + \psi(B_\tau) = g(\tau)$ and $E[M_{\tau}]\leq0$ for the exit time~$\tau$ defined by $D$; this will imply the optimality of $\tau$ by Corollary~\ref{co:optimizerEquality}. For notational convenience, let 
$$
  h(t,x) = E_{t,x}[\partial^{+}g(\tau)],\quad \Gamma(x) = g(l(x)) - g(r(x)) + \int_{l(x)}^{r(x)} h(s,x)\, ds.
$$
As a first step, we consider the function
$$
  H(t,x) = g(r(x)) - \int_t^{r(x)} h(s,x) \,ds + \Gamma(x)^+
$$
and show that
\begin{align}
  g(t) &\leq H(t,x)\quad\mbox{for}\quad t\geq0,\quad x\in J, \label{eq:proofGammaSuff1}\\
  g(l(x)) &= H(l(x),x)\quad\mbox{for}\quad x \in \supp \nu_l, \label{eq:proofGammaSuff2}\\
  g(r(x)) &= H(r(x),x)\quad\mbox{for}\quad x \in \supp \nu_r. \label{eq:proofGammaSuff3}
\end{align}
Indeed, \eqref{eq:proofGammaSuff1} follows directly from the definitions. For $t\geq t_{p}$ we have that $g'(t)$ is decreasing and hence
$$
  g(t) = g(r(x)) - \int_{t}^{r(x)} \!\! \partial^{+}g(s) \,ds \leq g(r(x)) - \int_{t}^{r(x)} \!\! h(s,x) \,ds + \Gamma(x)^+ = H(t,x)
$$
whereas for $0\leq t < t_{p}$ we have $\partial^{+}g(s)\leq \partial^{+}g(u)$ for $t\leq s\leq u$ and hence
$$
	g(t) = g(l(x)) + \int_{l(x)}^t \partial^{+}g(s) \,ds \leq g(l(x)) + \int_{l(x)}^t h(s,x) \,ds \leq H(t,x).
$$
Moreover, the assumption in Theorem~\ref{th:GammaCondition} states that $\Gamma(x) \geq 0$ for $x \in \supp \nu_{l}$ and $\Gamma(x) \leq 0$ for $x \in \supp \nu_{r}$. This yields~\eqref{eq:proofGammaSuff2} and~\eqref{eq:proofGammaSuff3}.

The rest of the proof consists in showing that $H(t,B_{t})=M_{t}+\psi(B_{t})$ on $[0,\tau]$ and $H(t,B_t) \leq M_t + \psi(B_t)$ on $[0,T]$ for a local martingale $M$ with $M_{0}=0$ which is bounded below and a function $\psi\in L^{1}(\nu)$. Once that is achieved, \eqref{eq:proofGammaSuff1}--\eqref{eq:proofGammaSuff3} show that $(M,\psi)\in\cD(G)$ have the desired properties.

To obtain the decomposition we consider the function
$\bar m(t,x) := g(r(x)) - \int_t^{r(x)} h(s,x)\,ds$. One can show as in \cite{CoxKinsley.18} that
$\bar m(t,B_t)$ is a submartingale on~$[0,\tau]$ and conclude that there is a unique increasing predictable process $A$ on $[0,\tau]$ such that $\bar m(t,B_t) - A_t$ is a martingale on~$[0,\tau]$. One can further show that $A$ agrees with a continuous additive
functional on $[0,\tau]$ such that $\bar m(t,B_t) - A_t$ is well-defined on $[0,T]$ and a supermartingale.
Using a representation result for additive
functionals, one can therefore write $A_t = z(B_t) - z(B_0) - \int_0^t z_-'(B_s)\,dB_s$ for a convex function $z$ and conclude that
$\bar m(t,B_t) - z(B_t)$ is still a martingale on $[0,\tau]$ and a supermartingale on $[0,T]$. We can choose $z$ such that $z\geq z(0) = \bar m(0,0)$.

Set $\psi(x) = z(x)+\Gamma(x)^+$, so that 
$$
H(t,x) = \bar m(t,x) - z(x) + \psi(x).
$$
Moreover, let $M$ be the martingale part of the supermartingale $\bar m(t,B_t) - z(B_t)$.
To see that $(M,\psi)$ is indeed a dual element, we show that $\bar m$ and $z$ are bounded. Then it follows that $z(x) + \Gamma(x)^+$ is bounded (as $\Gamma$ is bounded under our assumptions) and that $\bar m(t,x) - z(x)$, and hence $M$, is bounded from below.
Indeed, boundedness of $\bar m$ follows from the identity
\[
  \bar m(t,x) = g(r(x)) - \int_t^{t_p} h(s,x)\1_{s < t_p} \,ds - \int_{t_p}^{r(x)}h(s,x)\1_{s > t} \,ds.
\]
The first two terms are trivially bounded. For the last term, observe that on the domain of integration we have $0 \leq h(s,x) \leq \partial^{+}g(s)$ which implies boundedness.
Next, suppose that $z$ is unbounded, then we must have 
$z(x_{min}) = +\infty$ or $z(x_{max}) = +\infty$ as $z$ is convex and bounded from below. Note that $B_\tau = x_{min}$ and $B_\tau = x_{max}$ with positive probability, but $E[\bar m(\tau,B_\tau) - z(B_\tau)] = m(0,0) - z(0) = 0$
by the martingale property. Therefore, $z$ must be bounded and the proof is complete.
\end{proof}

\section{Counterexamples}\label{se:counterex}

In this section we demonstrate that relaxing the regularity in the dual domain is necessary to achieve a complete duality theory for general reward functions. It is also shown that  the monotonicity principle fails without an integrability condition.

\subsection{Local Martingale Property of $M$}\label{se:localMartEx}

We construct an example showing that it is crucial to use local martingales $M$ rather than true martingales as in previous works. More precisely, we construct a continuous reward function $G$ such that for a wide class of marginals~$\nu$, any dual optimizer $(M,\psi)\in\cD(G)$ fails to have the true martingale property; in fact, $E[M_{t}]>0$ for all $t>0$.

Let $\nu$ be a centered distribution which is equivalent to the Lebesgue measure on $\R$ and satisfies $\nu(f)<\infty$, where
\begin{equation}\label{eq:strictLocalMartf}
  f(x)=\exp(x^{4}).
\end{equation}
We note that $J=\R$ and hence $\cup_{n}[0,T_{n}]=C_{0}(\R_{+})\times\R_{+}$ in Definition~\ref{de:dualDomain}.  An important ingredient for our reward function is the process
\begin{equation}\label{eq:strictLocalMartL}
  L_{t} := \exp\left(B^{4}_{t} - 2\int_{0}^{t} (3B_{s}^{2} + 4B^{6}_{s})\,ds\right), \quad t\geq0
\end{equation}
which can also be written as the stochastic exponential $L_{t} = \cE_{t}\left(\int_{0}^{\cdot} 4B^{3}_{s}\,dB_{s}\right)$. We are grateful to Johannes Ruf for indicating this remarkably simple example of a strict local martingale to us.

\begin{lemma}[J.~Ruf]\label{le:strictLocalMart}
  The stochastic exponential $L_{t} = \cE_{t}\left(\int_{0}^{\cdot} 4B^{3}_{s}\,dB_{s}\right)$ is a positive local martingale with $E[L_{t}]<1$ for all $t>0$. In particular, $L$ is not a martingale on $[0,t]$ for any $t\in(0,\infty)$.
\end{lemma}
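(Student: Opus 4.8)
The plan is to verify the two assertions separately: first that $L$ is a positive local martingale, then that $E[L_t]<1$ strictly for $t>0$. The local martingale part is immediate from the representation $L_t = \cE_t(\int_0^\cdot 4B_s^3\,dB_s)$: the stochastic exponential of any continuous local martingale is itself a nonnegative continuous local martingale, and here the driving local martingale $\int_0^\cdot 4B_s^3\,dB_s$ is well defined since $s\mapsto B_s^3$ is continuous and hence locally bounded. Positivity is clear from the exponential form in~\eqref{eq:strictLocalMartL}. One should also note that since $L$ is a nonnegative local martingale with $L_0=1$, it is automatically a supermartingale, so $E[L_t]\le 1$ for all $t$; the content of the lemma is the strict inequality, which is equivalent to $L$ failing the true martingale property on $[0,t]$.

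The heart of the proof is to show $E[L_t]<1$. I would do this by exhibiting an explicit change of measure and reading off the deficit. By Girsanov's theorem, on each finite horizon one can consider the (possibly defective) measure $d\Q = L_t\,d\W$ on $\cF_t$; under a genuine probability measure making $L$ a true martingale, $\tilde B_s := B_s - \int_0^s 4B_u^3\,du$ would be a Brownian motion, i.e.\ $B$ would solve the SDE $dB_s = 4B_s^3\,ds + d\tilde B_s$ with $B_0=0$. The point is that this cubic-drift SDE explodes in finite time with positive probability: the drift $4x^3$ grows fast enough that Feller's test (or a direct comparison with the deterministic ODE $\dot x = 4x^3$, whose solutions blow up in finite time) shows that the solution started at $0$ reaches $+\infty$ before any fixed time $t>0$ with positive probability. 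The standard dictionary between strict local martingales and explosion (see e.g.\ the criterion that $\cE(\int \sigma\,dB)$ is a true martingale up to time $t$ iff the associated SDE does not explode before $t$) then gives $E[L_t] = \W'\{\text{no explosion before }t\} < 1$, where $\W'$ is the law of the solution to the cubic-drift SDE. Concretely, $E[L_t] = 1 - \W'\{\zeta \le t\}$ with $\zeta$ the explosion time, and $\W'\{\zeta\le t\}>0$ for every $t>0$ because, by Feller's test applied to the scale and speed measures of $dB = 4B^3\,dt + dB$, the boundary $+\infty$ is reached with positive probability in arbitrarily short time.

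The main obstacle is making the explosion argument rigorous and quantitative enough to conclude $\W'\{\zeta\le t\}>0$ for \emph{every} $t>0$, not just for large $t$. The cleanest route is Feller's test for explosions: compute the scale function $p(x)$ and the auxiliary function $v(x)$ for the one-dimensional diffusion with generator $\tfrac12\partial_{xx} + 4x^3\partial_x$, check $p(\infty)<\infty$ (so $+\infty$ is an accessible boundary) and $v(\infty)<\infty$ (so it is reached in finite time with positive probability). Since $p'(x)=\exp(-\int_0^x 8y^3\,dy)=\exp(-2x^4)$ is integrable at $+\infty$ and the corresponding $v$ involves $\int^\infty p'(y)\int_0^y 2\exp(2z^4)\,dz\,dy$, which converges because the double exponential is tamed by $p'(y)=e^{-2y^4}$, both conditions hold; the scaling / support-theorem argument then upgrades ``positive probability in finite time'' to ``positive probability before any given $t>0$.'' Alternatively, to avoid invoking the full local-martingale/explosion correspondence, one can argue directly: let $\sigma_n = \inf\{s: |B_s|\ge n\}$, note $L_{\cdot\wedge\sigma_n}$ is a true martingale, so $E[L_t] = \lim_n E[L_{t\wedge\sigma_n}\1_{\{\sigma_n\le t\}}] + E[L_t\1_{\{\sigma_n>t\}}] $ and the first term is $\int_{\{\sigma_n\le t\}} L_{\sigma_n}\,d\W$, which one estimates using the explicit form of $L$ at the (bounded) level $|B_{\sigma_n}|=n$ together with a lower bound on $\W\{\sigma_n\le t\}$, to show the limit of the ``lost mass'' is strictly positive. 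Either way, once $\W'\{\zeta\le t\}>0$ is established the lemma follows, and the final sentence (no martingale property on any $[0,t]$) is just the contrapositive of the supermartingale being strict.
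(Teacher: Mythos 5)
Your proposal is correct and follows essentially the same route as the paper: Girsanov plus Feller's test for explosions of $dX_t=4X_t^3\,dt+dW_t$ (with the same scale function $p'(x)=e^{-2x^4}$ and finiteness of $v(\pm\infty)$), together with the observation that explosion occurs before any fixed $t>0$ with positive probability. The only cosmetic difference is that you invoke the known identity $E[L_t]=\W'\{\zeta>t\}$, whereas the paper avoids it by a contradiction argument: assuming $E[L_T]=1$, Girsanov makes $B$ solve the explosive SDE under the equivalent measure, contradicting non-explosiveness of Brownian paths.
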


We defer the proof to the end of this subsection. As our payoff function, we then choose 
$$
  G = 1-L + f(B);
$$
Note that $G$ is a continuous function on $S$. Moreover, it follows directly from~\eqref{eq:strictLocalMartf} and~\eqref{eq:strictLocalMartL} that $f(B_{t})\geq L_{t}$ and hence $G\geq1$. As $L\geq0$ and $f\in L^{1}(\nu)$, a particularly simple dual element is $(M,\psi):=(0,1+f)\in\cD(G)$; therefore, $\cD(G)\neq\emptyset$ and $\bI(G)\leq 1+\nu(f) <\infty$, so that the conditions of our main results in Sections~\ref{se:dualProblem} and~\ref{se:duality} are all satisfied. This dual element features a true martingale; however, it is not optimal. 

\begin{proposition}\label{pr:exampleLocMartOpt}
  (i) $(M,\psi):=(1-L,f)\in\cD(G)$ is optimal for the dual problem $\bI(G)$. Moreover, any $\xi\in\cR(\nu)$ is optimal for $\bS(G)$.
  
  (ii) If $(M,\psi)\in\cD(G)$ is any optimizer for $\bI(G)$, then $E[M_{t}]>0$ for all $t>0$, so that $M$ cannot be a martingale.
\end{proposition}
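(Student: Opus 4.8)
The plan is to first dispense with part~(i) and then use it to pin down every dual optimizer in part~(ii). For part~(i): since $G = 1 - L + f(B)$ and $L$ is a nonnegative local martingale with $L_0 = 1$, the process $M := 1 - L$ is a local martingale with $M_0 = 0$ and $M + \psi(B) = 1 - L + f(B) = G$ identically, so $(M,\psi) = (1-L,f) \in \cD(G)$ once we check the lower bound: $M_{\cdot\wedge T_n} = 1 - L_{\cdot\wedge T_n} \leq 1$ is bounded \emph{above}, and we need bounded \emph{below} on $[0,T_n]$; here $T_n$ is the exit time of the compact interval $K_n$, on which $f(B)$ is bounded, so $L_{t\wedge T_n} = \cE_{t\wedge T_n}(\int_0^\cdot 4B_s^3\,dB_s)$ — hmm, $L$ itself need not be bounded on $[0,T_n]$ since the stochastic integral is unbounded. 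But $L \geq 0$ gives $M = 1 - L \leq 1$, and for the lower bound we use that $M + \psi(B) = G \geq 1 \geq 0$ together with $\psi = f \geq 0$; more directly, $M_{t\wedge T_n} = 1 - L_{t\wedge T_n} \geq 1 - f(B_{t\wedge T_n}) \geq 1 - \sup_{x\in K_n} f(x) > -\infty$. So $(1-L,f)\in\cD(G)$. To see it is optimal, compute: for any $\xi\in\cR(\nu)$, $\xi(G) = 1 - \xi(L) + \nu(f)$. Now $L$ is a nonnegative local martingale hence a supermartingale, and by Lemma~\ref{le:Msupermart} (applied to $M' = L - 1 \in \cD(0)$, since $L-1 \geq -1 \geq -f(B) - 1$ and it is bounded below on $[0,T_n]$ as above — actually more simply, $1 - L \in \cD(0)$ after the normalization, and $\xi(1-L)\leq 0$, i.e.\ $\xi(L)\geq 1$). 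Wait: we want $\xi(L) \geq 1$ to make $\xi(G)$ as \emph{large} as possible — no, $\xi(G) = 1 - \xi(L) + \nu(f)$ is \emph{maximized} when $\xi(L)$ is minimized. By the supermartingale property via Lemma~\ref{le:Msupermart}(ii), $\xi(1 - L) \leq 0$, i.e.\ $\xi(L) \geq 1$, so $\xi(G) \leq \nu(f)$. Conversely $\nu(\psi) = \nu(1 + \ldots)$ — for $(1-L,f)$ we have $\nu(\psi) = \nu(f)$. Hence $\bS(G) \leq \nu(f) = \bI(G) \leq \nu(\psi)$ for this element, and by weak duality (Lemma~\ref{le:weakDuality}) $\bS(G) = \bI(G) = \nu(f)$, with $(1-L,f)$ dual-optimal; moreover every $\xi\in\cR(\nu)$ attains $\xi(G)=\nu(f)$ because in fact $\xi(L) = 1$ for \emph{every} $\xi\in\cR(\nu)$ — this is the key point and deserves its own line: $L - 1$ and $1 - L$ both lie in $\cD(0)$ (both bounded below on $[0,T_n]$ since $f(B)$ is), so Lemma~\ref{le:Msupermart}(ii) gives $\xi(L-1)\leq 0$ and $\xi(1-L)\leq 0$, hence $\xi(L) = 1$. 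Therefore $\xi(G) = \nu(f)$ for all $\xi\in\cR(\nu)$, proving part~(i).

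For part~(ii): let $(M,\psi)\in\cD(G)$ be \emph{any} dual optimizer, so $\nu(\psi) = \bI(G) = \nu(f)$. Fix any $\xi\in\cR(\nu)$; it is primal-optimal by~(i). By Corollary~\ref{co:optimizerEquality}, $(M,\psi)$ optimal and $\xi$ optimal force $M + \psi(B) = G$ $\xi$-a.s.\ and $\xi(M) = 0$. Since $\xi$ ranges over \emph{all} of $\cR(\nu)$, which is a large class, the identity $M + \psi(B) = G = 1 - L + f(B)$ must hold on a correspondingly large set; the idea is to upgrade this to a pathwise identity. Concretely, I would argue that the inequality $M_t + \psi(B_t) \geq G_t$ holds up to evanescence on $\cup_n[0,T_n] = \R_+\times\Omega$ (as $J = \R$), so $M_t \geq G_t - \psi(B_t) = 1 - L_t + f(B_t) - \psi(B_t)$ everywhere; and along any $\tau\in\cT(\nu)$, $E[M_\tau] = 0$ by taking $\xi = \xi^\tau$ in the previous sentence (or by Lemma~\ref{le:Msupermart} since $M\in\cD(0)$ after normalization and $\xi^\tau(M) = E[M_\tau] \leq 0$, while optimality gives $\geq 0$). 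Now compare with the true martingale bound: for fixed $t > 0$, choose a stopping time of the form $\tau = t \wedge T_n$ — but this is not in $\cT(\nu)$. Instead: by Remark~\ref{rk:sampling}, $M_{\cdot\wedge T_n}$ satisfies optional sampling and is a supermartingale on $[0,T_n]$ (being bounded below there), and likewise one shows $E[M_t]$ is well-defined; the cleanest route is to take expectations in $M_t \geq 1 - L_t + f(B_t) - \psi(B_t)$. Taking $\W$-expectations (all terms integrable: $M_t$ is bounded below and $E[M_t]\leq M_0 = 0$ by the supermartingale property on $[0,t\wedge T_n]$ and Fatou, while $f(B_t), L_t$ are integrable), we get
\[
  E[M_t] \;\geq\; 1 - E[L_t] + E[f(B_t)] - E[\psi(B_t)].
\]
Hmm, but $E[\psi(B_t)]$ refers to $B_t$ under $\W$, not under $\nu$; these don't match, so this crude bound won't directly close. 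The fix is to not integrate against $\W$ but to exploit the $\xi$-a.s.\ equality together with $\xi(M) = 0$.

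Here is the cleaner argument for~(ii), which I expect to be the crux. From part~(i), every $\xi\in\cR(\nu)$ satisfies $\xi(L) = 1$ and, by Corollary~\ref{co:optimizerEquality}(ii) applied to the optimal pair $(M,\psi)$, $M + \psi(B) = G$ $\xi$-a.s., i.e.\ $M = 1 - L + f(B) - \psi(B)$ holds $\xi$-a.s.\ for every $\xi\in\cR(\nu)$. Since $\xi$ runs over all randomized embeddings of $\nu$ into Brownian motion, and such $\xi$'s put mass on arbitrarily long paths (e.g.\ the Azéma–Yor embedding, or convex combinations with embeddings stopped late), the set $\bigcup_{\xi}\{(\omega,t): M_t(\omega) = (1 - L_t + f(B_t) - \psi(B_t))(\omega)\text{ }\xi\text{-a.s.}\}$ — more carefully: the graph of $M$ restricted to the stopped region is forced to coincide with $1 - L + f(B) - \psi(B)$ on all stopped paths $(B_{\cdot\wedge\tau},\tau)$ with $\tau\in\cT(\nu)$. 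Because $M$, $L$, $B$ and $\psi(B)$ are all adapted processes and the embeddings $\cT(\nu)$ exhaust (by localization / the optional section theorem, as in Lemma~\ref{le:superhedUptoEmbedding}) enough stopping times, one concludes $M_t = 1 - L_t + f(B_t) - \psi(B_t)$ up to evanescence. Taking $\W$-expectation of $M_t + \psi(B_t) = 1 - L_t + f(B_t)$ — no, this still has the $\W$ vs $\nu$ mismatch on $\psi$. Let me abandon integrating and argue at the level of processes: we have shown $M_t = (1 - L_t) + (f(B_t) - \psi(B_t))$ identically. Now $M$ is a local martingale with $M_0 = 0$; $1 - L$ is a local martingale with value $0$ at time $0$. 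Hence $N_t := M_t - (1 - L_t) = f(B_t) - \psi(B_t)$ is a local martingale with $N_0 = f(0) - \psi(0)$. But $N$ is also a function of $B_t$ alone, of the form $\phi(B_t)$ with $\phi = f - \psi$; a local martingale of the form $\phi(B_t)$ (with $\phi$ Borel, $\nu$-integrable) must be harmonic, i.e.\ \emph{affine}: $\phi(x) = ax + b$. Then $E[M_t] = E[1 - L_t] + E[\phi(B_t)] = (1 - E[L_t]) + b$ since $E[\phi(B_t)] = aE[B_t] + b = b = \phi(0) = f(0) - \psi(0)$. By Lemma~\ref{le:strictLocalMart}, $E[L_t] < 1$ for all $t > 0$, so $1 - E[L_t] > 0$; and $b = f(0) - \psi(0) = 1 - \psi(0)$. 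After the standard normalization (Remark~\ref{rk:shift}) we may assume $\psi \geq 0$ and, since $\psi = f - \phi = f - ax - b$ must be $\nu$-integrable and $\geq 0$, the normalization + optimality force $b \geq 0$ in fact $b$ cannot be negative without violating $\psi\geq 0$ at $0$; one gets $E[M_t] = (1 - E[L_t]) + b \geq 1 - E[L_t] > 0$. The main obstacle is this rigidity step — showing $M$ is forced into the form $1 - L_t + (\text{affine in }B_t)$; it hinges on (a) Corollary~\ref{co:optimizerEquality} giving the $\xi$-a.s.\ equality for \emph{all} primal optimizers, (b) part~(i) that \emph{every} $\xi\in\cR(\nu)$ is primal-optimal, so the equality holds simultaneously for all embeddings and upgrades to evanescence, and (c) the elementary fact that a Brownian local martingale of the form $\phi(B_t)$ is affine. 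Everything else is bookkeeping with integrability via Fatou and Remark~\ref{rk:sampling}.
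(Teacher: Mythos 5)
Part (i) of your proposal is correct and, apart from packaging, matches the paper: you verify $(1-L,f)\in\cD(G)$ via $0\le L\le f(B)$ and boundedness of $f(B)$ on $[0,T_n]$, and you obtain optimality from the exact identity $1-L+f(B)=G$ together with the supermartingale/Fatou bound on $\xi(L)$ (the paper routes this through Corollary~\ref{co:optimizerEquality}; your two-sided argument giving $\xi(L)=1$ is slightly redundant but harmless).

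Part (ii) has a genuine gap at exactly the step you flag as the crux. From Corollary~\ref{co:optimizerEquality} you only get $M+\psi(B)=G$ \emph{$\xi$-a.s.\ for each} $\xi\in\cR(\nu)$, and you assert that, since every $\xi\in\cR(\nu)$ is optimal, this ``upgrades'' to a pathwise identity up to evanescence ``by localization / the optional section theorem.'' No such principle holds: there are uncountably many $\xi$, each with its own exceptional null set, and a non-evanescent optional set can be charged by no $\xi\in\cR(\nu)$ at all --- e.g.\ $\{\bt=0\}$, since $\nu(\{0\})=0$ forbids any embedding of $\nu$ from stopping at time $0$; similarly, modifying $\psi$ on a Lebesgue-null set produces optimizers with strict inequality on non-evanescent sets such as $\{B_\bt=0\}$, so the exact identity up to evanescence is even false as stated. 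Whether some $\xi\in\cR(\nu)$ charges a given time-slice $\{t\}\times A$ is itself a nontrivial question constrained by convex order (the law of the paths surviving to time $t$ must be dominated in convex order by the not-yet-embedded part of $\nu$), and your proposal supplies no argument. The subsequent steps also lean on unproven or garbled claims: that a Borel $\phi$ with $\phi(B)$ a local martingale must be affine is plausible but not justified, and your sign argument for $b$ is wrong as written ($\psi\ge0$ at $0$ gives $b\le1$, not $b\ge0$; what actually works is $\nu(\psi)=\nu(f)$ forcing $b=0$ since $\nu$ is centered). The paper avoids all of this by a different mechanism: testing the dual inequality along exit times $\sigma$ of bounded intervals, where $L_{\cdot\wedge\sigma}$ is a \emph{bounded} positive local martingale (hence $E[L_\sigma]=1$) and $M_{\cdot\wedge\sigma}$ is a supermartingale (hence $E[M_\sigma]\le0$), which yields $E[\psi(B_\sigma)]\ge E[f(B_\sigma)]$ for all such $\sigma$ and, via the convex-hull argument of Lemma~\ref{le:envelope} and Remark~\ref{rk:shift}, the pointwise bound $\psi(x)+cx\ge f(x)$. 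Optimality then gives $\nu(\psi)=\nu(f)$, hence $\psi=f$ Lebesgue-a.e.\ (as $\nu\sim\mathrm{Leb}$), and since $B_t$ has a density for $t>0$ one gets $M_t\ge 1-L_t$ a.s.\ at each fixed $t$, so $E[M_t]\ge 1-E[L_t]>0$ by Lemma~\ref{le:strictLocalMart}. If you want to salvage your route, you should replace the ``union over all $\xi$'' step by this pointwise estimate on $\psi$; the rest of your bookkeeping then becomes unnecessary.
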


\begin{proof}
  (i) Let $\tau\in\cT(\nu)$ and $(M,\psi):=(1-L,f)\in\cD(G)$. It is clear that $M + \psi(B) = G$ $\xi$-a.s., and $\xi(M)\geq 0$ as $L$ is a nonnegative supermartingale with $L_{0}=0$. The claim now follows from Corollary~\ref{co:optimizerEquality}.
  
  (ii) Let $(M,\psi)\in\cD(G)$. We first prove that there exists $c\in\R$ such that
  \begin{equation}\label{eq:exampleLocMartOptBound}
    \psi(x)+cx\geq f(x), \quad x\in\R.
  \end{equation}
  Indeed, let $a,b\geq0$ and $\sigma=\inf\{t\geq0:\, B_{t}\notin (-a,b)\}$. Note that the local martingale $L_{\cdot\wedge\sigma}$ is bounded and hence a uniformly integrable martingale. On the other hand, as $\sigma\leq T_{n}$ for $n$ large enough, $M_{\cdot\wedge\sigma}$ must be bounded below (Definition~\ref{de:dualDomain}) and hence a supermartingale. In particular, $E[L_{\sigma}]=1$ and $E[M_{\sigma}]\leq0$, so that $M_{\sigma}+\psi(B_{\sigma})\geq G_{\sigma}=1-L_{\sigma}+f(B_{\sigma})$ implies
  $$
    E[\psi(B_{\sigma})]\geq E[f(B_{\sigma})].
  $$
  As $a,b$ were arbitrary, this yields as in the proof of Lemma~\ref{le:envelope} that the convex hull satisfies $(\psi-f)^{**}(0)\geq0$. Taking $c=\partial^{-}(\psi-f)^{**}(0)$, we have $(\bar\psi-f)^{**}\geq0$ for $\bar\psi(x)=\psi(x)+cx$ and the claim follows.
  
  In view of~\eqref{eq:exampleLocMartOptBound} and Remark~\ref{rk:shift} we may assume that $\psi\geq f$. If  $(M,\psi)$ is optimal, then $\nu(\psi)=\bI(G)=\nu(f)$ by~(i) and it follows that $\psi=f$ $\nu$-a.s.\ and hence Lebesgue-a.e. But now $M+\psi(B)\geq 1-L+ f(B)$ implies $M_{t}\geq 1-L_{t}$ $\W$-a.s.\ and in particular $E[M_{t}]\geq 1- E[L_{t}]$ for all $t>0$, so $E[M_{t}]>0$ by Lemma~\ref{le:strictLocalMart}.
\end{proof}

\begin{proof}[Proof of Lemma~\ref{le:strictLocalMart}.]
  (i) We first provide an auxiliary result. Let $W$ be a Brownian motion on a filtered probability space with measure $Q$. Then the SDE
  \[
    dX_{t} = 4X_{t}^{3}\,dt + dW_{t}, \quad X_{0}=0
  \]
  has a unique strong solution $X$ up to its explosion time $\tau$ and $Q\{\tau<T\}>0$ for all $T\in(0,\infty)$.
  Indeed, existence and uniqueness of~$X$ follow from the local Lipschitz continuity of the coefficients (see, e.g., \cite[Exercise 2.10, p.\,383]{RevuzYor.99}). The scale function of $X$ is $p(x)=\int_{0}^{x} e^{-2y^{4}}\,dy$ and the speed measure is $m(dx)=2e^{2x^{4}}\,dx$.   
Thus
\begin{align*}
  v(x) := \int_{0}^{x} (p(x)-p(y))\,m(dy) = \int_{0}^{x}\int_{y}^{x} e^{2(y^{4}-z^{4})}\,dz\,dy
\end{align*}
 and in particular $v(\infty):=\lim_{x\to\infty} v(x)$ is given by
\begin{align*}
  \int_{0}^{\infty}\int_{0}^{\infty} e^{2(y^{4}-(y+u)^{4})}\,du\,dy
  = \int_{0}^{\infty}\int_{0}^{\infty} e^{-2(u^{4}+4u^{3}y+6u^{2}y^{2}-4uy^{3})}\,du\,dy.
\end{align*}
Comparison with a Gaussian integral shows that this quantity is finite. 
In view of the symmetry $v(x)=v(-x)$, the same holds for $v(-\infty)$. Thus, Feller's test implies that both boundaries $\pm\infty$ are limit points of~$X$ in finite time with positive probability; in fact, the explosion time $\tau$ even satisfies $\tau<\infty$ $Q$-a.s.\ by \cite[Proposition~5.5.32, p.\,350]{KaratzasShreve.91}. Using the homogeneity of~$X$, this already implies that $Q\{\tau<T\}>0$ for all $T\in(0,\infty)$; see, e.g., \cite[Theorem~1.1]{BruggemanRuf.16} for an elegant argument.

  (ii) We can now prove the lemma. As an exponential of a continuous local martingale, it is clear that $L$ is a local martingale and strictly positive, hence a supermartingale. Let $T\in(0,\infty)$ and suppose for contradiction that $E[L_{T}]=1$, or equivalently, that $L$ is a martingale on $[0,T]$. Then we can introduce the equivalent probability $Q$ on $\cF_{T}$ via $dQ/d\W=L_{T}$ and Girsanov's theorem shows that the process $W_{t}:=B_{t}-4\int_{0}^{t}B^{3}_{s}\,ds$ is a $Q$-Brownian motion on $[0,T]$. Moreover, $B$ satisfies 
$dB_{t} = 4B_{t}^{3}\,dt + dW_{t}$ and $B_{0}=0$ under $Q$.
As shown in (i), this implies that $Q\{\tau<T\}>0$ for the explosion time $\tau$ of $B$, contradicting that the Brownian motion $B$ under $\W$ is non-explosive.
\end{proof}

\subsection{Regularity of $\psi$}

The following example shows that a duality gap can arise if the functions~$\psi$ in the dual domain $\cD(G)$ are required to be continuous. The reward $G=\1_{\Q}(\bt)$ was previously used in~\cite{GuoTanTouzi.15a} to illustrate that their duality result can fail when the reward function is irregular in time. In our framework, duality holds by Theorem~\ref{th:duality}. Nevertheless, it is instructive to detail the optimizers as this highlights the mechanics of our definitions.

\begin{example}\label{ex:rationalTime}
  Let $G=\1_{\Q}(\bt)$ and $\nu=(\delta_{-1}+\delta_{1})/2$.
  
  (i) We have $\bS(G)=\bI(G)=0$, a primal optimizer is given by the stopping time $\tau=\inf\{t\geq0:\, |B_{t}|=1\}$ and a dual optimizer is given by $M\equiv0$ and $\psi=\1_{(-1,1)}$.
  
  To see this, let $\tau,M,\psi$ be as above and note that $J=[-1,1]$. We choose $K_{n}=J$ and thus $T:=T_{n}=\tau$ for all $n$. We claim that $\psi(B)\geq G$ on $[0,T]$ up to evanescence. Indeed, $\psi(B)=1$ on $[0,T)$, thus $\{\psi(B)< G\}$ is contained in the graph of $T$ and of course also in $\{G=1\}$. But since $T$ has a continuous distribution, $\W\{T\in\Q\}=0$ and hence $[T]\cap\{G=1\}=\{T\in\Q\}\times\R_{+}$ is indeed negligible up to evanescence. As a result, $(M,\psi)\in\cD(G)$. In view of $\nu(\psi)=0$ and $E[G_{\tau}]=0$, the optimality of $(M,\psi)$ and $\tau$ now follows from Corollary~\ref{co:optimizerEquality}.
  
  (ii) If the dual domain is restricted to continuous functions $\psi$, a duality gap arises:  the dual problem over continuous $\psi$ has value~$1$ instead of~$0$. Indeed, let $(M,\psi)\in\cD(G)$ be such that $\psi$ is continuous. We claim that there exists $c\in\R$ such that $\psi(x)\geq 1+cx$ for all $x\in[-1,1]$; in particular, this will imply that $\nu(\psi)\geq1$. Let $\sigma<T$ be a stopping time, let $\sigma_{n}'=\inf\{t\geq \sigma:\, t\in 2^{-n}\N\}$ be the usual dyadic approximation $\sigma_{n}'\downarrow \sigma$ and let $\sigma_{n}=\sigma_{n}'\wedge T$. Then we have 
\[
  M_{\sigma_{n}}+\psi(B_{\sigma_{n}})\geq G_{\sigma_{n}}=1 \quad\mbox{on}\quad \{\sigma_{n}<T\}
\]
since $\sigma_{n}$ has rational values on $\{\sigma_{n}<T\}$. As $\W\{\sigma_{n}<T\}\to1$, the continuity of $\psi$ and $M$ yields that  $M_{\sigma}+\psi(B_{\sigma})\geq1$ and hence $E[\psi(B_{\sigma})]\geq1$. Since this holds in particular for the hitting time $\sigma$ of any set $\{-a,b\}$ where $-1<-a\leq0\leq b<1$, it follows that $\psi^{**}(0)\geq0$ where $\psi^{**}$ is the convex hull on $(-1,1)$. Let $c=\partial^{-}\psi^{**}(0)$, then using again the continuity it follows that $\psi(x)\geq 1+cx$ for all $x\in[-1,1]$, as claimed.
\end{example}

\subsection{Monotonicity Principle}\label{se:monPrinciple}

In this section we show that the monotonicity principle of Corollary~\ref{co:monotPrinciple} does not hold
without an integrability condition. Indeed, we have the following.

\begin{proposition}\label{pr:noMonotPrinciple}
  There exist a Borel function $G: S\to [0,\infty)$, a centered distribution $\nu$ on~$\R$ with all moments finite and $\bS(G)<\infty$, and randomized stopping times $\xi^{1},\xi^{2}\in\cR(\nu)$ which are equivalent as measures on~$S$, such that $\xi^{1}$ is optimal for $\bS(G)$ and $\xi^{2}$ is not optimal for $\bS(G)$. 
\end{proposition}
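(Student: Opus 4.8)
The plan is to exhibit the counterexample with $G=L$, the nonnegative continuous strict local martingale constructed in Section~\ref{se:localMartEx} (so $L_0=1$, $L\geq 0$, and $E[L_t]<1$ for every $t>0$), viewed as a reward functional on $S$. The key observation is that the pair $(M,\psi):=(L-1,1)$, with $1$ the constant function and $M:=L-1$ a local martingale with $M_0=0$ and $M\geq -1$, satisfies $M+\psi(B)=L=G$ \emph{identically}, so $(M,\psi)\in\cD(G)$ and $\bS(G)=\bI(G)\leq\nu(1)=1<\infty$ by Theorem~\ref{th:duality}. Because the reward is saturated everywhere, Corollary~\ref{co:optimizerEquality} applies verbatim to any $\xi\in\cR(\nu)$: since $\xi(M)\leq 0$ always (Lemma~\ref{le:Msupermart}, as $(M,\psi)\in\cD(0)$ too), $\xi$ is optimal for $\bS(G)$ precisely when $\xi(L)=\xi(M)+1=1$, i.e.\ when $L$ is a uniformly integrable martingale along $\xi$ (and then $(M,\psi)$ is a dual optimizer and $\bS(G)=1$). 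Note that $G=L$ is \emph{not} of class~(D)---$\sup_t L_t$ is not integrable---so Corollary~\ref{co:monotPrinciple} does not apply, and the proposition will follow once we produce $\nu$ (with all moments) and two \emph{equivalent} $\xi^1,\xi^2\in\cR(\nu)$ with $\xi^1(L)=1>\xi^2(L)$; equivalence then prevents any Borel $\Gamma\subseteq S$ from separating the optimal $\xi^1$ from the non-optimal $\xi^2$.

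The two randomized stopping times are built on one common skeleton, which makes equivalence automatic. One runs $B$ and, along an increasing sequence of first-passage-type times $\sigma_1<\sigma_2<\cdots$---interspersed with optional excursions on which the path is allowed to travel to arbitrarily high levels and return---attaches at each $\sigma_k$ an internal auxiliary randomization that, with a probability $p_k\in(0,1)$ allowed to depend on $\cF_{\sigma_k}$, decides whether to stop or to continue. For any two choices of such probability sequences whose values are \emph{strictly} between $0$ and $1$, the resulting $\xi^1,\xi^2$ charge exactly the same stopped paths of $S$, hence are equivalent as measures; a routine check (adaptedness of the kernels, finiteness of $\xi^i(\bt)=\int x^2\,d\nu$) gives $\xi^i\in\cR$. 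For $\xi^1$ one arranges $\nu:=\xi^1\circ B^{-1}$ (which one may take with very light tails) and the decision probabilities (chosen close to $1$, with rare escape excursions) so that $L$ remains uniformly integrable along $\xi^1$; this uses crucially that $L$ stopped at $\sigma_k$ is a \emph{bounded} martingale, so mass can only be lost through sufficiently slow wandering. Thus $\xi^1(L)=1$, $\bS(G)=1$, and $\xi^1$ is optimal with $(M,\psi)$ a dual optimizer.

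For $\xi^2$ one keeps the \emph{conditional means} of the decision probabilities equal to those of $\xi^1$, which forces $\xi^2$ to embed the same $\nu$, while shaping their path-dependence so that absorption is biased toward Brownian trajectories that have already spent a long time at high levels, where $L$ has decayed well below $1$. Because $L$ is a genuine strict local martingale, this biasing makes $L$ lose mass along $\xi^2$, so that $\xi^2(L)<1=\bS(G)$ and $\xi^2$ is not optimal. Since $\xi^1\sim\xi^2$, Corollary~\ref{co:optimizerEquality}(iii) necessarily fails for $\xi^2$ (indeed $\xi^2(M)=\xi^2(L)-1<0$), consistently with non-optimality.

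The main obstacle is precisely this last step: one must \emph{break} uniform integrability of $L$ along $\xi^2$---equivalently, make $E^{\bar\W}[L_{\rho^2}]$ strictly below $1$---\emph{without} altering the embedded law $\nu$ and \emph{while} keeping every decision probability strictly interior. These requirements pull against one another, since forcing $\xi^2$ to waste enough time naively either moves mass to new final values or drives some $p_k$ to $0$ or $1$; reconciling them is what necessitates path-dependent randomization at infinitely many scales together with a careful quantitative choice of the escape probabilities (and a verification that the associated Mertens/optional-time $\rho^2$ has $E[\rho^2]<\infty$ so that $\xi^2\in\cR(\nu)$). Once $\xi^2\in\cR(\nu)$ with $\xi^2(L)<1$ is in hand, the conclusion is formal, via Corollary~\ref{co:optimizerEquality} and the equivalence $\xi^1\sim\xi^2$.
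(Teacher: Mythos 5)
Your reduction is fine as far as it goes: with $G=L$ the pair $(L-1,\mathbf{1})\in\cD(G)$ gives $\bS(G)\leq 1$, and (provided some $\xi\in\cR(\nu)$ with $\xi(L)=1$ exists) Corollary~\ref{co:optimizerEquality} shows that $\xi\in\cR(\nu)$ is optimal exactly when $\xi(L)=1$. But the entire content of the proposition is then the existence of a single centered $\nu$ with all moments finite and of \emph{two equivalent} $\xi^{1},\xi^{2}\in\cR(\nu)$ with $\xi^{1}(L)=1>\xi^{2}(L)$, and this you do not construct --- you explicitly defer it as ``the main obstacle.'' Worse, the mechanism you propose for preserving the marginal does not work as stated: keeping the \emph{conditional means} of the decision probabilities at the times $\sigma_k$ equal does not force the two randomized stopping times to embed the same law, since the law of $B$ at the stopping time depends on the joint distribution of the path and the (path-dependent) stopping decisions, not merely on conditional stopping probabilities along a fixed sequence of times. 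Note also that by Lemma~\ref{lem:trueMartingaleCondition} any $\nu$ with $\nu(e^{x^4})<\infty$ forces $\xi(L)=1$ for \emph{every} embedding, so your $\nu$ must have heavy enough tails that $\nu(f)=\infty$ while all moments are finite, and you must simultaneously arrange equivalence, identical marginals, $\xi^i(\bt)<\infty$, and a genuine loss of mass of $L$ along $\xi^{2}$ only; none of this is carried out, and it is far from routine.

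It is worth seeing how the paper sidesteps exactly this difficulty. Rather than taking $G=L$, it lets $\sigma=\inf\{t:\,t^2+B_t^2=1\}$, picks an $\cF_\sigma$-measurable Bernoulli variable $X$ \emph{independent of} $B_\sigma$ (Lemma~\ref{lem:payoffRandomization}), and sets $G_t=\1_{t>\sigma,\,X=0}+L_{t-\sigma}(B^{\sigma\mapsto})\1_{t>\sigma,\,X=1}$. It then builds two post-$\sigma$ stopping rules: $\hat\xi^{\rm int}$ from Lemma~\ref{lem:smallStoppingTime}, whose embedded law integrates $e^{x^4}$ so that $\hat\xi^{\rm int}(L)=1$ by Lemma~\ref{lem:trueMartingaleCondition}, and $\hat\xi^{\rm avg}=(\hat\xi^{\rm exp}+\hat\xi^{\rm int})/2$ with $\hat\xi^{\rm exp}$ an independent exponential time, so that $\hat\xi^{\rm avg}(L)<1$ by Lemma~\ref{le:strictLocalMart}, while $\hat\xi^{\rm avg}\sim\hat\xi^{\rm int}$ because $\hat\xi^{\rm int}\gg\W\otimes\lambda$. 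The two randomized stopping times $\xi^{1},\xi^{2}$ are obtained by \emph{swapping} which rule is used on $\{X=0\}$ and $\{X=1\}$: since $X$ is independent of $B_\sigma$, this swap leaves the embedded law $\nu$ unchanged for free, the equivalence on $S$ is immediate, and yet the rewards differ because $G$ itself depends on $X$ (it is the constant $1$ where the lossy rule is used by $\xi^1$, and the restarted $L$ where the conservative rule is used). In other words, the paper never has to produce one marginal with two equivalent embeddings giving different $L$-expectations --- which is precisely the unconstructed (and possibly very delicate) step on which your proposal rests. As written, your argument therefore has a genuine gap at its central point.
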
 

As $\xi^{1}(\Gamma)=1$ is equivalent to $\xi^{2}(\Gamma)=1$, for any Borel set $\Gamma\subseteq S$, it follows that \emph{optimality of $\xi\in\cR(\nu)$ cannot be determined by its support}.

We start with some preliminary results that will be used in the construction. Recall that $\xi\in\cR$ is defined as a measure on $C_{0}(\R_{+})\times \R_{+}$ and induces a measure on~$\cB(S)$ via $\xi(\Gamma):=\xi\{(\omega,t)\in C_{0}(\R_{+})\times \R_{+}:\,  (\omega|_{[0,t]},t)\in\Gamma\}$; in fact, $\xi$ is completely characterized by the latter. A product measure $\W \otimes \lambda$ on $C_{0}(\R_{+})\times \R_{+}$ induces a measure on $S$ in the same fashion.
In what follows, we set $f(x) = \exp(x^4)$ and denote by $L$ the strict local martingale
defined in~\eqref{eq:strictLocalMartL}.

\begin{lemma}\label{lem:smallStoppingTime}
There exists $\hat\xi^{\rm int}\in\cR$ such that $\hat\xi^{\rm int}(f(B))+\hat\xi^{\rm int}(\bt)<\infty$ and
$\hat\xi^{\rm int}\gg\W \otimes \lambda$ on~$S$, where $\lambda$ is the Lebesgue measure.
\end{lemma}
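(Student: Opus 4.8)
The plan is to build $\hat\xi^{\rm int}$ from a stopping intensity that blows up precisely where the path (hence $f(B)$) is large, so that the law induced on $S$ still has a strictly positive Lebesgue density in the time coordinate while assigning negligible mass to the region where $f(B)$ is large. First I would set $\bar B_t:=\sup_{s\le t}|B_s|$ and
\[
  V_t:=\int_0^t f(\bar B_s)\,ds ,
\]
and define $\hat\xi^{\rm int}\in\cR$ by the disintegration $\hat\xi^{\rm int}(d\omega,dt)=\W(d\omega)\,\hat\xi^{\rm int}_\omega(dt)$ with $\hat\xi^{\rm int}_\omega([0,t])=1-e^{-V_t(\omega)}$, equivalently $\hat\xi^{\rm int}_\omega$ having Lebesgue density $t\mapsto f(\bar B_t)e^{-V_t}$ on $\R_+$. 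Since $f\ge 1$, the process $V$ is continuous, strictly increasing, $\cF$-adapted, satisfies $V_t\ge t$, and $V_\infty=\infty$; hence $\hat\xi^{\rm int}_\omega$ is a probability measure for a.e.\ $\omega$ and $\omega\mapsto\hat\xi^{\rm int}_\omega([0,t])$ is $\cF_t$-measurable, so $\hat\xi^{\rm int}\in\cR$ in the sense of Definition~\ref{de:RST}. Domination is then immediate: the density $f(\bar B_t)e^{-V_t}$ is finite and strictly positive, so $\hat\xi^{\rm int}_\omega\sim\lambda$ on $\R_+$ for a.e.\ $\omega$; as $\hat\xi^{\rm int}$ and $\W\otimes\lambda$ both disintegrate over $C_0(\R_+)$ with first marginal $\W$, this pathwise equivalence lifts to $\hat\xi^{\rm int}\sim\W\otimes\lambda$ as measures on $S$, in particular $\hat\xi^{\rm int}\gg\W\otimes\lambda$.

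Next I would verify the two moment conditions. For $\hat\xi^{\rm int}(\bt)$, the change of variables $v=V_t$ gives $\hat\xi^{\rm int}(\bt)=E\big[\int_0^\infty V^{-1}(v)e^{-v}\,dv\big]\le\int_0^\infty v e^{-v}\,dv<\infty$, using $V^{-1}(v)\le v$. For $\hat\xi^{\rm int}(f(B))$, the same substitution together with $f(B_t)\le f(\bar B_t)$ yields
\[
  \hat\xi^{\rm int}(f(B))=E\Big[\int_0^\infty f(B_t)\,f(\bar B_t)\,e^{-V_t}\,dt\Big]\le\int_0^\infty E\big[f(\beta(v))\big]\,e^{-v}\,dv ,\qquad \beta(v):=\bar B_{V^{-1}(v)} .
\]

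The hard part will be to show that $v\mapsto E[f(\beta(v))]$ grows only polynomially. Writing $\sigma_\ell:=\inf\{t:|B_t|\ge\ell\}$, monotonicity of $\bar B$ gives $\{\beta(v)\ge\ell\}=\{V_{\sigma_\ell}\le v\}$, and since $\bar B_s\ge\ell/2$ on $[\sigma_{\ell/2},\sigma_\ell]$ one has $V_{\sigma_\ell}\ge(\sigma_\ell-\sigma_{\ell/2})\,e^{\ell^4/16}$. Combining this with the strong Markov property at $\sigma_{\ell/2}$ and the reflection-principle bound $\W\{\sigma_\ell-\sigma_{\ell/2}\le s\}\le 2e^{-\ell^2/(8s)}$ gives $\W\{\beta(v)\ge\ell\}\le 2\exp\big(-\tfrac{\ell^2}{8v}e^{\ell^4/16}\big)$. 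Plugging this into $E[f(\beta(v))]=1+\int_0^\infty 4\ell^3 e^{\ell^4}\,\W\{\beta(v)>\ell\}\,d\ell$ and splitting the integral at $\ell_0:=(16\log v)^{1/4}$ yields $E[f(\beta(v))]\le C(1+v^{16})$, whence $\int_0^\infty E[f(\beta(v))]e^{-v}\,dv<\infty$, completing the argument.

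The main obstacle is exactly this tail estimate: $f(B_t)=e^{B_t^4}$ is not $\W$-integrable at any fixed time, so the stopping intensity has to grow fast enough to compensate while the density stays strictly positive (so that the rule remains genuinely spread over all of $\R_+$, which is what forces $\hat\xi^{\rm int}\gg\W\otimes\lambda$). Using the running maximum $\bar B$ rather than $B$ itself in the intensity is the key device: because $f(\bar B_s)$ stays large once a high level has been reached, the lower bound $V_{\sigma_\ell}\ge(\sigma_\ell-\sigma_{\ell/2})e^{\ell^4/16}$ becomes available, and the whole estimate reduces to a small-time Brownian hitting bound.
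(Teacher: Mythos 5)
Your construction is correct, but it is a genuinely different route from the paper's. The paper proceeds by a soft mixture argument: for each $n$ it takes the elementary kernel $\xi^n_\omega(dt)=e^{-t}\1_{t<\tau^n(\omega)}\,dt+e^{-\tau^n(\omega)}\delta_{\tau^n(\omega)}(dt)$ with $\tau^n=\inf\{t:|B_t|\ge n\}$, so that trivially $\xi^n(f(B))\le e^{n^4}$ and $\xi^n(\bt)\le n^2$, and $\xi^n\gg\W\otimes\lambda$ on $S^n=\{(\omega,t):t<\tau^n(\omega)\}$; it then sets $\hat\xi^{\rm int}=\sum_n a_n\xi^n$ with $\sum_n a_n e^{n^4}<\infty$ and uses $\cup_n S^n=S$. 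No quantitative estimates beyond these one-line bounds are needed. You instead build a single randomized stopping time from the state-dependent clock $V_t=\int_0^t f(\bar B_s)\,ds$, and the burden shifts to the tail estimate for $\beta(v)=\bar B_{V^{-1}(v)}$; I checked the key steps and they hold: $\{\beta(v)\ge\ell\}=\{V_{\sigma_\ell}\le v\}$, the lower bound $V_{\sigma_\ell}\ge(\sigma_\ell-\sigma_{\ell/2})e^{\ell^4/16}$, the strong Markov/reflection bound $\W\{\sigma_\ell-\sigma_{\ell/2}\le s\}\le 2e^{-\ell^2/(8s)}$, and the split at $\ell_0=(16\log v)^{1/4}$ indeed give $E[f(\beta(v))]\le C(1+v^{16})$ (the second piece is controlled, after substituting $u=\ell^4$ and $u=16\log v+c$, by $2v^{16}\int_0^\infty e^{c}\exp\bigl(-\tfrac12 e^{c/16}\bigr)\,dc<\infty$ for $v\ge e$, and by monotonicity in $v$ for small $v$), while the $\bt$-moment and the membership in $\cR$ are immediate from $V_t\ge t$. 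What each approach buys: the paper's mixture is essentially computation-free and only yields domination of $\W\otimes\lambda$ (its kernels have atoms at the exit times), which is all the subsequent counterexample needs; your single-clock construction is more delicate but produces an atomless kernel with strictly positive density, hence full equivalence $\hat\xi^{\rm int}\sim\W\otimes\lambda$ on $S$, which is a slightly stronger conclusion.
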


\begin{proof}
For $n\geq 1$ we define $\tau^n = \inf\{t:\, |B_t| \geq n\}$ and 
\[
  S^n = \{(\omega,t)\in S :\, \sup_{s \leq t} |\omega_s| < n \} = \{(\omega,t)\in S :\,t<\tau^n(\omega)\}.
\]
We first construct $\xi^n\in\cR$ such that
\begin{equation}\label{eq:smallStoppingTimeStep}
  \xi^{n} [0,\tau^{n}] =1 \qquad\mbox{and}\qquad \xi^{n} \gg \W \otimes \lambda \;\mbox{ on }\; S^{n} .
\end{equation} 
Consider the adapted, increasing process $A^{n}$ defined by 
\[
  A_t^n := 1 - e^{-t}\1_{t < \tau^n};
\]
it is strictly increasing and differentiable up to $\tau^{n}$ and then jumps to the value 1. Thus, the kernel
\[
  \xi^n_\omega(dt) = dA_t^n(\omega) = e^{-t}\1_{t < \tau^n(\omega)}\,dt + e^{-\tau^n(\omega)}\delta_{\tau^n(\omega)}(dt)
\]
defines a randomized stopping time via $\xi^{n}=\W(d\omega)\xi^n_\omega(dt)$ which satisfies~\eqref{eq:smallStoppingTimeStep}.

Clearly~\eqref{eq:smallStoppingTimeStep} implies $\xi^n(f(B)) \leq \exp(n^4)$ and  $\xi^n(\bt) \leq E[\tau^n] = n^2\leq \exp(n^4)$.
Let $(a_n)_{n\geq1}$ be a sequence in $(0,1)$ such that $\sum_{n \geq 1} a_n = 1$
and $\sum_{n \geq 1} a_n \exp(n^4) < \infty$. We define
$\xi := \sum_{n \geq 1} a_n \xi^n$; then $\xi\in\cR$ satisfies $\xi^n(f(B))<\infty$ and  $\xi^n(\bt)<\infty$. 
Since every stopped path is bounded, we have $\cup_{n \geq 1} S^n = S$ and thus~\eqref{eq:smallStoppingTimeStep} implies $\xi \gg \W \otimes \lambda$ on~$S$ as desired.
\end{proof}

\begin{lemma}\label{lem:payoffRandomization}
Let $\sigma = \inf \{ t :\, t^2 + B_t^2 = 1 \}$. There exists an $\cF_{\sigma}$-measurable
Bernoulli random variable~$X$ independent of $B_\sigma$.
\end{lemma}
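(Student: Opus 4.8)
The plan is to extract the coin from the \emph{stopped path} $U:=(B_{t\wedge\sigma})_{t\ge0}$, which is $\cF_\sigma$-measurable, using that $\sigma$ together with $B_\sigma$ carry only ``one real parameter'' of information whereas $U$ carries much more. First I would record the elementary facts: since $t^2+B_t^2\ge t^2$ we have $\sigma\le1$ a.s., and $\sigma^2+B_\sigma^2=1$ forces $B_\sigma\in(-1,1)$ and $\sigma=\sqrt{1-B_\sigma^2}$, a Borel function of $B_\sigma$. Since $B_s$ has a density for each fixed $s$, $\W\{B_\sigma=x\}\le\W\{B_{\sqrt{1-x^2}}=x\}=0$ for $x\neq0$ (and $\W\{B_\sigma=0\}\le\W\{B_1=0\}=0$), so the law $\mu:=\Law(B_\sigma)$ on $(-1,1)$ is atomless. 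I would then reduce the whole statement to the assertion that the conditional law $\Law(B_{\sigma/2}\mid B_\sigma=h)$ is atomless for $\mu$-a.e.\ $h$; note $\sigma/2<\sigma$ a.s.\ and $B_{\sigma/2}$ is a Borel function of $U$, hence $\cF_\sigma$-measurable. Granting this, the map $h\mapsto c(h):=\inf\{z:\ \W\{B_{\sigma/2}\le z\mid B_\sigma=h\}\ge\tfrac12\}$ is a Borel conditional median, and $X:=\1_{\{B_{\sigma/2}>c(B_\sigma)\}}$ is $\cF_\sigma$-measurable and $\{0,1\}$-valued; for every Borel $C\subseteq\R$, conditioning on $B_\sigma$ gives $\W\{X=1,\ B_\sigma\in C\}=\int_C\W\{B_{\sigma/2}>c(h)\mid B_\sigma=h\}\,\mu(dh)=\tfrac12\,\mu(C)$, so that (taking $C=\R$) $\W\{X=1\}=\tfrac12$ and $X$ is independent of $B_\sigma$.

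The heart of the argument is the claim that $\Law(B_{\sigma/2}\mid B_\sigma=h)$ is atomless for $\mu$-a.e.\ $h$. The natural first attempt — to describe this conditional law as a Brownian bridge from $0$ to $h$ on $[0,s_0]$, $s_0:=\sqrt{1-h^2}$, conditioned to stay inside the open disk $D=\{(t,x):t^2+x^2<1\}$ — \emph{fails}, since that event has probability zero: near its pinned endpoint the bridge fluctuates on the scale $\sqrt{s_0-t}$ while the boundary circle approaches the chord $t\mapsto th/s_0$ only on the scale $s_0-t$, so by the local law of the iterated logarithm the bridge a.s.\ leaves $D$ strictly before time $s_0$. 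Instead I would use the Doob description of ``$(B_t)_{t\le\sigma}$ conditioned on its first-exit place $B_\sigma=h$ from $D$'': it is the process $(X_t)_{0\le t<s_0}$ with $X_0=0$ solving $dX_t=\partial_x\log q(t,X_t)\,dt+dW_t$, where $q(t,x)$ is the density at $(s_0,h)$ of the first-exit position from $D$ of the space-time motion $(s,B_s)_{s\ge t}$ started at $(t,x)$; by classical parabolic regularity $q$ is smooth and strictly positive on the interior of $D$. Hence the conditioned process is a nondegenerate diffusion on $[0,s_0)$, and its time-$s_0/2$ marginal has density $z\mapsto q(s_0/2,z)\,p_{s_0/2}(0,z)/q(0,0)$ with respect to Lebesgue measure (with $p$ the Gaussian transition kernel); in particular it is atomless, which is the claim.

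The main obstacle is precisely this step: turning the intuitively obvious ``$\cF_\sigma$ contains much more than $B_\sigma$'' into a rigorous atomlessness statement. One cannot condition directly on the $\W$-null event $\{B_\sigma=h\}$; the bridge picture is unavailable for the reason above; and the full-path atomlessness provided by Lemma~\ref{le:WhAtomless} (with $\eta=1$, so $\tau_\eta=\sigma$) does \emph{not} help, because under that conditional law the post-$\sigma$ increments are an independent Brownian motion, so the law factors as $P_h\otimes\W$ and is atomless irrespective of the stopped-path factor $P_h$. Once the caloric-kernel description and its immediate consequence — absolute continuity of the interior-time marginal — are available, the remaining points (joint measurability of $(h,z)\mapsto\W\{B_{\sigma/2}\le z\mid B_\sigma=h\}$, hence Borel measurability of $c$, and the conditioning computation) are routine.
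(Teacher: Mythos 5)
Your proposal is correct, and its endgame (measurable conditional median, indicator, and the conditioning computation giving $\W\{X=1,\,B_\sigma\in C\}=\tfrac12\mu(C)$) is exactly the paper's; but the core step --- producing an $\cF_\sigma$-measurable variable whose conditional law given $B_\sigma$ is atomless --- is handled by a genuinely different route. The paper takes $B_{\sigma'}$ with $\sigma'=\inf\{t:\,t^2+B_t^2=1/2\}$, the exit position from a smaller concentric circle. Since $\sigma'$ is a stopping time, the strong Markov property applies at $\sigma'$: given $\cF_{\sigma'}$, the terminal value $B_\sigma$ is the exit position of a fresh Brownian motion started at $(\sigma',B_{\sigma'})$, and the atomlessness of the backward conditional $\Law(B_{\sigma'}\mid B_\sigma=h)$ is read off from this Markov structure (the paper in fact simply asserts it). Your variable $B_{\sigma/2}$ is $\cF_\sigma$-measurable by Galmarino's test, but $\sigma/2$ is \emph{not} a stopping time, so no Markov decomposition is available at that instant; this is precisely what forces you into the heavier machinery of identifying the conditional law of the stopped path given the exit place with a Doob $h$-transform by the caloric Poisson kernel $q$, and then reading off an absolutely continuous time-$s_0/2$ marginal. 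That route does work, and your side remarks are accurate: the naive picture of a Brownian bridge conditioned to stay in the disk indeed degenerates (the conditioning event is null), and Lemma~\ref{le:WhAtomless} with $\eta=1$ is of no help because its atomlessness comes from the post-$\sigma$ Wiener factor. What the paper's choice buys is that all the analytic input reduces to the strong Markov property plus elementary facts about exit laws; what your choice costs is that you must invoke, rather than prove, classical caloric-measure facts for this space-time disk (existence, smoothness and strict positivity of the exit density at non-characteristic boundary points, and the fact that the $h$-transform laws disintegrate the stopped-path law over the exit position).

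Two smaller points. First, your formula for the conditioned marginal should use the Dirichlet heat kernel of $D$ (Brownian motion killed at $\partial D$) in place of the free Gaussian kernel $p$: with $p$ the expression has total mass exceeding one, so it is not the density; the correct formula $q(s_0/2,z)\,p^{D}_{s_0/2}(0,z)/q(0,0)$ is still absolutely continuous, so the conclusion you need is unaffected. Second, you should note explicitly that the target point $(s_0,h)$ with $h\neq0$ (which suffices since $\mu\{0\}=0$) is a non-characteristic, smooth boundary point, so the parabolic tip $(1,0)$ of the disk causes no trouble for the regularity and positivity of $q$.
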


\begin{proof}
Define $\sigma' = \inf \{ t :\, t^2 + B_t^2 = 1/2 \}$. Then $B_{\sigma'}$ is $\cF_{\sigma}$-measurable and its conditional distribution given $B_{\sigma}$ is atomless. In particular, there exists a conditional
median $m(x)$ given $B_\sigma=x$; that is, $P[B_{\sigma'} \geq m(x)|B_\sigma = x] = 1/2$.  By construction, $X:= \1_{B_{\sigma'} \geq m(B_\sigma)}$ has a Bernoulli distribution and is independent of~$B_\sigma$.
\end{proof}

\begin{lemma}\label{lem:trueMartingaleCondition}
We have $\xi(L) \leq 1$ for all $\xi\in\cR$. If $\xi$ embeds a distribution $\nu$ with $\nu(f) < \infty$, then $\xi(L) = 1$.
\end{lemma}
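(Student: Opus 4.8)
The plan is to transfer everything to the enlarged space $\bar C_0(\R_+)$. By Lemma~\ref{le:rho} there is an $\bar\F$-stopping time $\rho$ with $\bar\W\circ(\bar B,\rho)^{-1}=\xi$; in particular $\rho<\infty$ $\bar\W$-a.s.\ and $\xi(L)=E^{\bar\W}[L(\bar B)_\rho]$, where below I write $B,\W$ for $\bar B,\bar\W$ to lighten notation. Recall from Lemma~\ref{le:strictLocalMart} that $L=\cE\!\left(\int_0^\cdot 4B_s^3\,dB_s\right)$ is a strictly positive local martingale with $L_0=1$, hence a supermartingale, and that $L_t\le f(B_t)=\exp(B_t^4)$ pointwise. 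For the first claim I would apply optional stopping at the bounded times $\rho\wedge n$ to get $E[L_{\rho\wedge n}]\le L_0=1$, observe that $L_{\rho\wedge n}\to L_\rho$ a.s.\ since $\rho<\infty$, and conclude $\xi(L)=E[L_\rho]\le1$ by Fatou's lemma.

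For the second claim the plan is as follows. Assuming $\xi$ embeds $\nu$ with $\nu(f)<\infty$, we have $\xi\in\cR(\nu)$, so $E[\rho]=\xi(\bt)<\infty$ and $B_{\cdot\wedge\rho}$ is an $L^2$-bounded, hence uniformly integrable, martingale with $B_\rho\sim\nu$ and $E[f(B_\rho)]=\nu(f)<\infty$. Introduce $\sigma_k=\inf\{t:|B_t|\ge k\}$ and $\theta_k=\rho\wedge\sigma_k$. On $[0,\sigma_k]$ one has $L\le e^{k^4}$, so $L_{\cdot\wedge\sigma_k}$ is a bounded, hence uniformly integrable, martingale, which yields $E[L_{\theta_k}]=L_0=1$ for every $k$. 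Since $\sigma_k\uparrow\infty$ and $\rho<\infty$ a.s., $\theta_k=\rho$ for all large $k$ (depending on the path), so $L_{\theta_k}\to L_\rho$ a.s.; it then remains to pass to the limit under the expectation.

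The only delicate step — and the main obstacle — is the uniform integrability of $(L_{\theta_k})_k$: the crude estimate $L_{\theta_k}\le f(B_{\theta_k})$ is useless here, because $\sup_{t\le\rho}|B_t|$ need not have quartic-exponential moments. The fix is to exploit the target integrability $\nu(f)<\infty$ through the martingale property. Since $\theta_k\le\rho$ and $B_{\cdot\wedge\rho}$ is a uniformly integrable martingale, optional sampling gives $B_{\theta_k}=E[B_\rho\mid\cF_{\theta_k}]$, and since $x\mapsto\exp(x^4)$ is convex, conditional Jensen's inequality yields
\[
  L_{\theta_k} \le \exp(B_{\theta_k}^4) = \exp\bigl((E[B_\rho\mid\cF_{\theta_k}])^4\bigr) \le E[\exp(B_\rho^4)\mid\cF_{\theta_k}] = E[f(B_\rho)\mid\cF_{\theta_k}].
\]
The right-hand sides are conditional expectations of the single integrable random variable $f(B_\rho)$, hence form a uniformly integrable family; as $(L_{\theta_k})_k$ is nonnegative and dominated by it, it is uniformly integrable as well. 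Combining this with the a.s.\ convergence $L_{\theta_k}\to L_\rho$, we obtain $\xi(L)=E[L_\rho]=\lim_k E[L_{\theta_k}]=1$, which completes the argument.
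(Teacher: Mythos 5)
Your proof is correct and takes essentially the same route as the paper: the first claim is Fatou's lemma for the positive local martingale, and for the second your conditional-Jensen bound $L_{\theta_k}\le E[f(B_\rho)\mid\cF_{\theta_k}]$ is exactly what underlies the paper's assertion that $f(B)$ (and hence $L$) is of class~(D) up to $\xi$. Your explicit localization with $\sigma_k$ and the Vitali convergence step simply unpack the paper's abstract appeal to the fact that a class-(D) local martingale is a uniformly integrable martingale.
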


\begin{proof}
As $L$ is a positive local martingale with $L_0 = 1$, we have $\xi(L) \leq 1$ by Fatou's lemma. Suppose that $\xi\in\cR(\nu)$ where $\nu(f) < \infty$ and recall that $0 \leq L_t \leq f(B_t)$. As $f$ is convex, $f(B_{t})$ is a positive submartingale up to~$\xi$ and hence of class~(D), where we may use the representation of $\xi$ as nonrandomized stopping time in the enlarged filtration (cf.\ Lemma~\ref{le:rho}) to apply the standard results of stochastic analysis. This implies that $L$ is a martingale of class~(D) up to $\xi$ and in particular $\xi(L) = 1$.
\end{proof}

\begin{proof}[Proof of Proposition~\ref{pr:noMonotPrinciple}]
Let $G\geq0$ be the payoff defined by
$$
  G_t = \1_{t > \sigma,X=0} + L_{t-\sigma}(B^{\sigma\mapsto})\1_{t > \sigma,X = 1}
$$
where $\sigma= \inf\{t:\,t^2 + B_t^2 = 1\}$, the random variable $X$ is as in Lemma~\ref{lem:payoffRandomization} and, with the notation of Definition~\ref{de:gluedAndShift},
$$
  L_{t-\sigma}(B^{\sigma\mapsto}) = \exp\left((B_t-B_\sigma)^4 - 2 \int_\sigma^t (3(B_s - B_\sigma)^2 + 4 (B_s-B_\sigma)^6)\,ds\right).
$$

Let $\xi^{\rm int}=\sigma\oplus\hat\xi^{\rm int}$ be the randomized stopping time obtained by shifting $\hat\xi^{\rm int}$ of Lemma~\ref{lem:smallStoppingTime} by~$\sigma$ (cf.\ Definition~\ref{de:gluedAndShift}); that is, if $\hat\xi^{\rm int}=\W(d\omega)\hat\xi^{\rm int}_{\omega}(ds)$, then 
$\xi^{\rm int}=\W(d\omega)\xi^{\rm int}_{\omega}(ds)$ where 
$$
\xi^{\rm int}_{\omega}[0,t]=\1_{\sigma(\omega)<t} \;\hat\xi^{\rm int}_{\omega_{\cdot}-\omega_{\sigma(\omega)}}[0,t-\sigma(\omega)].
$$
Using Lemma~\ref{lem:trueMartingaleCondition}, we then have $\xi^{\rm int}(G)=\hat\xi^{\rm int}(L)=1$.

Next, let $\hat\xi^{\rm exp}$ be an exponential random time, defined by its kernel $\hat\xi^{\rm exp}_{\omega}(ds)=e^{-s}\,ds$. As $E[L_{t}]<1$ for $t>0$ by Lemma~\ref{le:strictLocalMart}, we have $\hat\xi^{\rm exp}(L)=\int_{0}^{\infty} e^{-t} E[L_{t}]\,dt <1$. Moreover, $\hat\xi^{\rm exp}$ is clearly equivalent to $\W\otimes\lambda$ on $S$, so that $\hat\xi^{\rm exp}\ll \hat\xi^{\rm int}$. The reverse is not true, but if we set
$$
  \hat\xi^{\rm avg} = (\hat\xi^{\rm exp} + \hat\xi^{\rm int})/2,
$$
then $\hat\xi^{\rm avg}$ is equivalent to $\hat\xi^{\rm int}$ on $S$ and we also have $\hat\xi^{\rm avg}(L)<1$.

Define $\xi^{\rm avg}=\sigma\oplus\hat\xi^{\rm avg}$.
Then $\xi^{\rm avg}$ and $\xi^{\rm int}$ already have properties close to the desired ones, but they do not embed the same distribution yet. To achieve that, we randomly mix the two stopping times using~$X$. Indeed, we define the randomized stopping times $\xi^{1}$ and $\xi^{2}$ through their kernels
\[
  \xi^{1}_{\omega} := \xi^{\rm avg}_{\omega} \1_{X(\omega)=0} + \xi^{\rm int}_{\omega}\1_{X(\omega)=1}, \qquad \xi^{2}_{\omega} := \xi^{\rm avg}_{\omega}\1_{X(\omega)=1} + \xi^{\rm int}_{\omega}\1_{X(\omega)=0}.
\]
Then
$$
  \xi^{1}(G)=\frac12 \hat\xi^{\rm avg}(1) + \frac12 \hat\xi^{\rm int}(L)=1
$$
whereas 
$$
  \xi^{2}(G)=\frac12 \hat\xi^{\rm avg}(L) + \frac12 \hat\xi^{\rm int}(1)<1.
$$
By construction, $\xi^{1}$ and $\xi^{2}$ are equivalent on $S$ and embed the same distribution~$\nu$. The integrability properties of $\hat\xi^{\rm exp}$ and $\hat\xi^{\rm int}$ entail that $\nu$ has all moments (even some exponential moments) and that $\xi^{1}(\bt)=\xi^{2}(\bt)<\infty$.
\end{proof}

\appendix

\section{Appendix: Extension to Finite First Moment}\label{se:firstMoment}

In the body of this article we have assumed that the embedded measure $\nu$ has a finite second moment, but the results can be extended to the case of a finite first moment by using well-known facts, at the expense of a slightly more convoluted definition. The crucial observation is that under the second moment condition, $E[\tau]<\infty$ for $\tau\in\cT(\nu)$ is equivalent to $\tau$ being minimal (or uniformly integrable), and minimality can still be used in the first moment case. For a randomized stopping time, the simplest definition is obtained by referring to its stopping time representation in a larger filtration.

For the remainder of this section, $\nu$ is a centered distribution on $\R$ with finite first moment.

\begin{definition}\label{de:minimal}
  Let $\tau$ be a finite stopping time such that $B_{\tau}\sim\nu$. Then $\tau$ is called \emph{minimal} if there exists no smaller embedding; that is, if $\sigma$ is another stopping time such that $B_{\sigma}\sim\nu$ and $\sigma\leq\tau$ a.s., then  $\tau=\sigma$ a.s. We denote by $\cT(\nu)$ the set of all such $\tau$.
  
  Let $\xi$ be a randomized stopping time such that $\xi\circ B^{-1}=\nu$ and let $\rho$ be the associated $\bar\F$-stopping time; cf.~Lemma~\ref{le:rho}. Then $\xi$ is called \emph{minimal} if~$\rho$ is minimal in the above sense. We denote by $\cR(\nu)$ the set of all such $\xi$.
\end{definition}

We can now state the announced extension.

\begin{remark}\label{rk:extensionFirstMoment}
 The results in Sections~\ref{se:approximation}--\ref{se:duality} continue to hold under the first moment condition.
\end{remark}

\begin{proof}
  The modifications are as follows. (i) In the proof of Lemma~\ref{le:EasyTesting} we used that $\cR(\nu)$ is weakly compact. This still holds in the present setting; cf.\ \cite[Section~7.1]{BeiglbockCoxHuesmann.14}.
(ii) A stopping time $\tau$ is minimal if and only if $B_{\cdot \wedge \tau}$ is uniformly integrable; cf. \cite[Theorem~3]{Monroe.72}. Thus, in the proofs of Lemmas~\ref{le:superhedUptoEmbedding} and~\ref{le:Msupermart}, we can use directly that
$B_{\cdot \wedge \tau}$ is uniformly integrable instead of deriving this from $E[\tau] < \infty$. (iii) Proposition~\ref{pr:strongDualityCont} still holds, e.g., by \cite[Theorem~7.2 and Example~7.2.1]{BeiglbockCoxHuesmann.14}. (iii) The proof of Lemma~\ref{le:SisCapacity} again used that $\cR(\nu)$ is weakly compact; cf.~(i). 
The other proofs did not use directly that $\nu$ has finite second moment.
\end{proof}

\newcommand{\dummy}[1]{}


\begin{thebibliography}{10}

\bibitem{AcciaioBeiglbockPenknerSchachermayer.12}
B.~Acciaio, M.~Beiglb{\"o}ck, F.~Penkner, and W.~Schachermayer.
\newblock A model-free version of the fundamental theorem of asset pricing and
  the super-replication theorem.
\newblock {\em Math. Finance}, 26(2):233--251, 2016.

\bibitem{AliprantisBorder.06}
C.~D. Aliprantis and K.~C. Border.
\newblock {\em Infinite Dimensional Analysis: A Hitchhiker's Guide}.
\newblock Springer, Berlin, 3rd edition, 2006.

\bibitem{Ambrosio.03}
L.~Ambrosio.
\newblock Lecture notes on optimal transport problems.
\newblock In {\em Mathematical aspects of evolving interfaces ({F}unchal,
  2000)}, volume 1812 of {\em Lecture Notes in Math.}, pages 1--52. Springer,
  Berlin, 2003.

\bibitem{AmbrosioGigli.13}
L.~Ambrosio and N.~Gigli.
\newblock A user's guide to optimal transport.
\newblock In {\em Modelling and optimisation of flows on networks}, volume 2062
  of {\em Lecture Notes in Math.}, pages 1--155. Springer, Heidelberg, 2013.

\bibitem{BaxterChacon.77}
J.~R. Baxter and R.~V. Chacon.
\newblock Compactness of stopping times.
\newblock {\em Z. Wahrscheinlichkeitstheorie und Verw. Gebiete},
  40(3):169--181, 1977.

\bibitem{BeiglbockCoxHuesmann.14}
M.~Beiglb{\"o}ck, A.~M.~G. Cox, and M.~Huesmann.
\newblock Optimal transport and {S}korokhod embedding.
\newblock {\em Invent. Math.}, 208(2):327--400, 2017.

\bibitem{BeiglbockCoxHuesmann.17}
M.~Beiglb{\"o}ck, A.~M.~G. Cox, and M.~Huesmann.
\newblock The geometry of multi-marginal {S}korokhod embedding.
\newblock {\em To appear in Probab. Theory Related Fields}, 2019.

\bibitem{BeiglbockHenryLaborderePenkner.11}
M.~Beiglb{\"o}ck, P.~Henry-Labord{\`e}re, and F.~Penkner.
\newblock Model-independent bounds for option prices: a mass transport
  approach.
\newblock {\em Finance Stoch.}, 17(3):477--501, 2013.

\bibitem{BeiglbockHenryLabordereTouzi.15}
M.~Beiglb{\"o}ck, P.~Henry-Labord{\`e}re, and N.~Touzi.
\newblock Monotone martingale transport plans and {S}korokhod embedding.
\newblock {\em Stochastic Process. Appl.}, 127(9):3005--3013, 2017.

\bibitem{BeiglbockHuesmannStebegg.15}
M.~Beiglb{\"o}ck, M.~Huesmann, and F.~Stebegg.
\newblock Root to {K}ellerer.
\newblock In {\em S\'eminaire de probabilit\'es {XLVIII}}, volume 2168 of {\em
  Lecture Notes in Math.}, pages 1--12. Springer, Berlin, 2016.

\bibitem{BeiglbockLimObloj.17}
M.~Beiglb{\"o}ck, T.~Lim, and J.~Ob{\l}{\'o}j.
\newblock Dual attainment for the martingale transport problem.
\newblock {\em Bernoulli}, 25(3):1640--1658, 2019.

\bibitem{BeiglbockNutzTouzi.15}
M.~Beiglb{\"o}ck, M.~Nutz, and N.~Touzi.
\newblock Complete duality for martingale optimal transport on the line.
\newblock {\em Ann. Probab.}, 45(5):3038--3074, 2017.

\bibitem{BertsekasShreve.78}
D.~P. Bertsekas and S.~E. Shreve.
\newblock {\em Stochastic Optimal Control. The Discrete-Time Case.}
\newblock Academic Press, New York, 1978.

\bibitem{BiaginiBouchardKardarasNutz.14}
S.~Biagini, B.~Bouchard, C.~Kardaras, and M.~Nutz.
\newblock Robust fundamental theorem for continuous processes.
\newblock {\em Math. Finance}, 27(4):963--987, 2017.

\bibitem{BouchardNutz.13}
B.~Bouchard and M.~Nutz.
\newblock Arbitrage and duality in nondominated discrete-time models.
\newblock {\em Ann. Appl. Probab.}, 25(2):823--859, 2015.

\bibitem{BrownHobsonRogers.01}
H.~Brown, D.~Hobson, and L.~C.~G. Rogers.
\newblock Robust hedging of barrier options.
\newblock {\em Math. Finance}, 11(3):285--314, 2001.

\bibitem{BruggemanRuf.16}
C.~Bruggeman and J.~Ruf.
\newblock A one-dimensional diffusion hits points fast.
\newblock {\em Electron. Commun. Probab.}, 21(22):1--7, 2016.

\bibitem{BurzoniFrittelliMaggis.15}
M.~Burzoni, M.~Frittelli, and M.~Maggis.
\newblock Model-free superhedging duality.
\newblock {\em Ann. Appl. Probab.}, 27(3):1452--1477, 2017.

\bibitem{CheriditoKupperTangpi.14}
P.~Cheridito, M.~Kupper, and L.~Tangpi.
\newblock Representation of increasing convex functionals with countably
  additive measures.
\newblock {\em Preprint arXiv:1502.05763v1}, 2015.

\bibitem{CoxHouObloj.14}
A.~M.~G. Cox, Z.~Hou, and J.~Ob{\l}{\'o}j.
\newblock Robust pricing and hedging under trading restrictions and the
  emergence of local martingale models.
\newblock {\em Finance Stoch.}, 20(3):669--704, 2016.

\bibitem{CoxKinsley.18b}
A.~M.~G. Cox and S.~M. Kinsley.
\newblock Discretisation and duality of optimal skorokhod embedding problems.
\newblock {\em Stochastic Process. Appl.}, 129:2376--2405, 2019.

\bibitem{CoxKinsley.18}
A.~M.~G. Cox and S.~M. Kinsley.
\newblock Robust hedging of options on a leveraged exchange traded fund.
\newblock {\em Ann. Appl. Probab.}, 29(1):531--576, 2019.

\bibitem{CoxObloj.11}
A.~M.~G. Cox and J.~Ob{\l}{\'o}j.
\newblock Robust pricing and hedging of double no-touch options.
\newblock {\em Finance Stoch.}, 15(3):573--605, 2011.

\bibitem{CoxOblojTouzi.15}
A.~M.~G. Cox, J.~Ob{\l}{\'o}j, and N.~Touzi.
\newblock The {R}oot solution to the multi-marginal embedding problem: an
  optimal stopping and time-reversal approach.
\newblock {\em Probab. Theory Related Fields}, 173(1-2):211--259, 2019.

\bibitem{CoxWang.13}
A.~M.~G. Cox and J.~Wang.
\newblock Root's barrier: construction, optimality and applications to variance
  options.
\newblock {\em Ann. Appl. Probab.}, 23(3):859--894, 2013.

\bibitem{CzichowskySchachermayer.16}
C.~Czichowsky and W.~Schachermayer.
\newblock Strong supermartingales and limits of nonnegative martingales.
\newblock {\em Ann. Probab.}, 44(1):171--205, 2016.

\bibitem{Dalang.84}
R.~C. Dalang.
\newblock Sur l'arr{\^e}t optimal de processus {\`a} temps multidimensionnel
  continu.
\newblock In {\em Seminar on probability, {XVIII}}, volume 1059 of {\em Lecture
  Notes in Math.}, pages 379--390. Springer, Berlin, 1984.

\bibitem{DelbaenSchachermayer.94}
F.~Delbaen and W.~Schachermayer.
\newblock A general version of the fundamental theorem of asset pricing.
\newblock {\em Math. Ann.}, 300:463--520, 1994.

\bibitem{DellacherieMeyer.78}
C.~Dellacherie and P.~A. Meyer.
\newblock {\em Probabilities and Potential A}.
\newblock North Holland, Amsterdam, 1978.

\bibitem{DellacherieMeyer.82}
C.~Dellacherie and P.~A. Meyer.
\newblock {\em Probabilities and Potential B}.
\newblock North Holland, Amsterdam, 1982.

\bibitem{DeMarcoHenryLabordere.15}
S.~De{\;}Marco and P.~Henry-Labord{\`e}re.
\newblock Linking vanillas and {VIX} options: a constrained martingale optimal
  transport problem.
\newblock {\em SIAM J. Financial Math.}, 6(1):1171--1194, 2015.

\bibitem{DolinskySoner.12}
Y.~Dolinsky and H.~M. Soner.
\newblock Martingale optimal transport and robust hedging in continuous time.
\newblock {\em Probab. Theory Related Fields}, 160(1--2):391--427, 2014.

\bibitem{DolinskySoner.14}
Y.~Dolinsky and H.~M. Soner.
\newblock Martingale optimal transport in the {S}korokhod space.
\newblock {\em Stochastic Process. Appl.}, 125(10):3893--3931, 2015.

\bibitem{ElKarouiLepeltierMillet.92}
N.~El~Karoui, J.-P. Lepeltier, and A.~Millet.
\newblock A probabilistic approach to the reduite in optimal stopping.
\newblock {\em Probab. Math. Statist.}, 13(1):97--121, 1992.

\bibitem{FahimHuang.14}
A.~Fahim and Y.-J. Huang.
\newblock Model-independent superhedging under portfolio constraints.
\newblock {\em Finance Stoch.}, 20(1):51--81, 2016.

\bibitem{GalichonHenryLabordereTouzi.11}
A.~Galichon, P.~Henry-Labord{\`e}re, and N.~Touzi.
\newblock A stochastic control approach to no-arbitrage bounds given marginals,
  with an application to lookback options.
\newblock {\em Ann. Appl. Probab.}, 24(1):312--336, 2014.

\bibitem{Ghoussoub.83}
N.~Ghoussoub.
\newblock An integral representation of randomized probabilities and its
  applications.
\newblock In {\em Seminar on {P}robability, {XVI}}, volume 920 of {\em Lecture
  Notes in Math.}, pages 519--543. Springer, Berlin-New York, 1982.

\bibitem{GhoussoubKimLim.15}
N.~Ghoussoub, Y.-H. Kim, and T.~Lim.
\newblock Structure of optimal martingale transport in general dimensions.
\newblock {\em Ann. Probab.}, 47(1):109--164, 2019.

\bibitem{GhoussoubKimPalmer.18}
N.~Ghoussoub, Y.-H. Kim, and A.~Z. Palmer.
\newblock {PDE} methods for optimal {S}korokhod embeddings.
\newblock {\em Calc. Var.}, 58(113), 2019.

\bibitem{GozlanRobertoSamsonTetali.14}
N.~Gozlan, C.~Roberto, P.-M. Samson, and P.~Tetali.
\newblock Kantorovich duality for general transport costs and applications.
\newblock {\em J. Funct. Anal.}, 273(11):3327--3405, 2017.

\bibitem{GuoTanTouzi.15}
G.~Guo, X.~Tan, and N.~Touzi.
\newblock On the monotonicity principle of optimal {S}korokhod embedding
  problem.
\newblock {\em SIAM J. Control Optim.}, 54(5):2478--2489, 2016.

\bibitem{GuoTanTouzi.15a}
G.~Guo, X.~Tan, and N.~Touzi.
\newblock Optimal {S}korokhod embedding under finitely many marginal
  constraints.
\newblock {\em SIAM J. Control Optim.}, 54(4):2174--2201, 2016.

\bibitem{GyongySiska.08}
I.~Gy{\" o}ngy and D.~{\v S}i{\v s}ka.
\newblock On randomized stopping.
\newblock {\em Bernoulli}, 14(2):352--361, 05 2008.

\bibitem{HenryLabordereOblojSpoidaTouzi.12}
P.~Henry-Labord{\`e}re, J.~Ob{\l}{\'o}j, P.~Spoida, and N.~Touzi.
\newblock Maximum maximum of martingales given marginals.
\newblock {\em Ann. Appl. Probab.}, 26(1):1--44, 2016.

\bibitem{HenryLabordereTanTouzi.14}
P.~Henry-Labord{\`e}re, X.~Tan, and N.~Touzi.
\newblock An explicit version of the one-dimensional {B}renier's theorem with
  full marginals constraint.
\newblock {\em Stochastic Process. Appl.}, 126(9):2800--2834, 2016.

\bibitem{HirschProfetaRoynetteYor.11}
F.~Hirsch, C.~Profeta, B.~Roynette, and M.~Yor.
\newblock {\em Peacocks and Associated Martingales, with Explicit
  Constructions}.
\newblock Springer, Milan, 2011.

\bibitem{Hobson.98}
D.~Hobson.
\newblock Robust hedging of the lookback option.
\newblock {\em Finance Stoch.}, 2(4):329--347, 1998.

\bibitem{Hobson.11}
D.~Hobson.
\newblock The {S}korokhod embedding problem and model-independent bounds for
  option prices.
\newblock In {\em Paris-{P}rinceton {L}ectures on {M}athematical {F}inance
  2010}, volume 2003 of {\em Lecture Notes in Math.}, pages 267--318. Springer,
  Berlin, 2011.

\bibitem{Hobson.15}
D.~Hobson.
\newblock Mimicking martingales.
\newblock {\em Ann. Appl. Probab.}, 26(4):2273--2303, 2016.

\bibitem{HobsonKlimmek.12}
D.~Hobson and M.~Klimmek.
\newblock Model-independent hedging strategies for variance swaps.
\newblock {\em Finance Stoch.}, 16(4):611--649, 2012.

\bibitem{HobsonKlimmek.15}
D.~Hobson and M.~Klimmek.
\newblock Robust price bounds for the forward starting straddle.
\newblock {\em Finance Stoch.}, 19(1):189--214, 2015.

\bibitem{HobsonNeuberger.12}
D.~Hobson and A.~Neuberger.
\newblock Robust bounds for forward start options.
\newblock {\em Math. Finance}, 22(1):31--56, 2012.

\bibitem{HobsonPedersen.02}
D.~G. Hobson and J.~L. Pedersen.
\newblock The minimum maximum of a continuous martingale with given initial and
  terminal laws.
\newblock {\em Ann. Probab.}, 30(2):978--999, 2002.

\bibitem{HuesmannStebegg.18}
M.~Huesmann and F.~Stebegg.
\newblock Monotonicity preserving transformations of {MOT} and {SEP}.
\newblock {\em Stochastic Process. Appl.}, 128(4):1114--1134, 2018.

\bibitem{JacodMemin.81}
J.~Jacod and J.~M\'emin.
\newblock Sur un type de convergence interm{\'e}diaire entre la convergence en
  loi et la convergence en probabilit{\'e}.
\newblock In {\em Seminar on {P}robability, {XV} ({U}niv. {S}trasbourg,
  {S}trasbourg, 1979/1980) ({F}rench)}, volume 850 of {\em Lecture Notes in
  Math.}, pages 529--546. Springer, Berlin-New York, 1981.

\bibitem{KallbladTanTouzi.15}
S.~K{\"a}llblad, X.~Tan, and N.~Touzi.
\newblock Optimal {S}korokhod embedding given full marginals and
  {A}z{\'e}ma{--}{Y}or peacocks.
\newblock {\em Ann. Appl. Probab.}, 27(2):686--719, 2017.

\bibitem{KaratzasShreve.91}
I.~Karatzas and S.~E. Shreve.
\newblock {\em Brownian Motion and Stochastic Calculus}.
\newblock Springer, New York, 2nd edition, 1991.

\bibitem{Kellerer.84}
H.~G. Kellerer.
\newblock Duality theorems for marginal problems.
\newblock {\em Z. Wahrsch. Verw. Gebiete}, 67(4):399--432, 1984.

\bibitem{Lacker.18a}
D.~Lacker.
\newblock Dense sets of joint distributions appearing in filtration
  enlargements, stochastic control, and causal optimal transport.
\newblock {\em Preprint arXiv:1805.03185v1}, 2018.

\bibitem{Loynes.70}
R.~M. Loynes.
\newblock Stopping times on {B}rownian motion: {S}ome properties of {R}oot's
  construction.
\newblock {\em Z. Wahrscheinlichkeitstheorie und Verw. Gebiete}, 16:211--218,
  1970.

\bibitem{MadanYor.02}
D.~B. Madan and M.~Yor.
\newblock Making {M}arkov martingales meet marginals: with explicit
  constructions.
\newblock {\em Bernoulli}, 8(4):509--536, 2002.

\bibitem{McCann.95}
R.~J. McCann.
\newblock Existence and uniqueness of monotone measure-preserving maps.
\newblock {\em Duke Math. J.}, 80(2):309--323, 1995.

\bibitem{Mertens.72}
J.-F. Mertens.
\newblock Th\'{e}orie des processus stochastiques g\'{e}n\'{e}raux applications
  aux surmartingales.
\newblock {\em Z. Wahrscheinlichkeitstheorie und Verw. Gebiete}, 22:45--68,
  1972.

\bibitem{Mertens.73}
J.-F. Mertens.
\newblock Strongly supermedian functions and optimal stopping.
\newblock {\em Z. Wahrscheinlichkeitstheorie und Verw. Gebiete}, 26:119--139,
  1973.

\bibitem{Monroe.72}
I.~Monroe.
\newblock On embedding right continuous martingales in {B}rownian motion.
\newblock {\em Ann. Math. Statist.}, 43:1293--1311, 1972.

\bibitem{NeufeldNutz.12}
A.~Neufeld and M.~Nutz.
\newblock Superreplication under volatility uncertainty for measurable claims.
\newblock {\em Electron. J. Probab.}, 18(48):1--14, 2013.

\bibitem{Nutz.13}
M.~Nutz.
\newblock Superreplication under model uncertainty in discrete time.
\newblock {\em Finance Stoch.}, 18(4):791--803, 2014.

\bibitem{Nutz.14}
M.~Nutz.
\newblock Robust superhedging with jumps and diffusion.
\newblock {\em Stochastic Process. Appl.}, 125(12):4543--4555, 2015.

\bibitem{NutzStebegg.16}
M.~Nutz and F.~Stebegg.
\newblock Canonical supermartingale couplings.
\newblock {\em Ann. Probab}, 46(6):3351--3398, 2018.

\bibitem{NutzStebeggTan.17}
M.~Nutz, F.~Stebegg, and X.~Tan.
\newblock Multiperiod martingale transport.
\newblock {\em Stochastic Process. Appl.}, 130(3):1568--1615, 2020.

\bibitem{Obloj.04}
J.~Ob{\l}{\'o}j.
\newblock The {S}korokhod embedding problem and its offspring.
\newblock {\em Probab. Surv.}, 1:321--390, 2004.

\bibitem{Parthasarathy.67}
K.~R. Parthasarathy.
\newblock {\em Probability measures on metric spaces}.
\newblock Probability and Mathematical Statistics, No. 3. Academic Press, New
  York, 1967.

\bibitem{Pratelli.07}
A.~Pratelli.
\newblock On the equality between {M}onge's infimum and {K}antorovich's minimum
  in optimal mass transportation.
\newblock {\em Ann. Inst. H. Poincar\'{e} Probab. Statist.}, 43(1):1--13, 2007.

\bibitem{RachevRuschendorf.98a}
S.~T. Rachev and L.~R{\"u}schendorf.
\newblock {\em Mass transportation problems. {V}ol. {I}}.
\newblock Probability and its Applications (New York). Springer-Verlag, New
  York, 1998.
\newblock Theory.

\bibitem{RachevRuschendorf.98b}
S.~T. Rachev and L.~R{\"u}schendorf.
\newblock {\em Mass transportation problems. {V}ol. {II}}.
\newblock Probability and its Applications (New York). Springer-Verlag, New
  York, 1998.
\newblock Applications.

\bibitem{RevuzYor.99}
D.~Revuz and M.~Yor.
\newblock {\em Continuous martingales and {B}rownian motion}, volume 293 of
  {\em Grundlehren der Mathematischen Wissenschaften}.
\newblock Springer-Verlag, Berlin, third edition, 1999.

\bibitem{Root.69}
D.~H. Root.
\newblock The existence of certain stopping times on {B}rownian motion.
\newblock {\em Ann. Math. Statist.}, 40:715--718, 1969.

\bibitem{Rost.76}
H.~Rost.
\newblock Skorokhod stopping times of minimal variance.
\newblock In {\em S\'{e}minaire de {P}robabilit\'{e}s, {X} ({P}remi\`ere
  partie, {U}niv. {S}trasbourg, {S}trasbourg, ann\'{e}e universitaire
  1974/1975)}, volume 511 of {\em Lecture Notes in Math.}, pages 194--208.
  Springer, Berlin, 1976.

\bibitem{Skorokhod.65}
A.~V. Skorokhod.
\newblock {\em Studies in the theory of random processes}.
\newblock Translated from the Russian by Scripta Technica, Inc. Addison-Wesley
  Publishing Co., Inc., Reading, Mass., 1965.

\bibitem{Stebegg.14}
F.~Stebegg.
\newblock Model-independent pricing of {A}sian options via optimal martingale
  transport.
\newblock {\em Preprint arXiv:1412.1429v1}, 2014.

\bibitem{TanTouzi.11}
X.~Tan and N.~Touzi.
\newblock Optimal transportation under controlled stochastic dynamics.
\newblock {\em Ann. Probab}, 41(5):3201--3240, 2013.

\bibitem{Touzi.14}
N.~Touzi.
\newblock Martingale inequalities, optimal martingale transport, and robust
  superhedging.
\newblock In {\em Congr\`es {SMAI} 2013}, volume~45 of {\em ESAIM Proc.
  Surveys}, pages 32--47. EDP Sci., Les Ulis, 2014.

\bibitem{Villani.03}
C.~Villani.
\newblock {\em Topics in optimal transportation}, volume~58 of {\em Graduate
  Studies in Mathematics}.
\newblock American Mathematical Society, Providence, RI, 2003.

\bibitem{Villani.09}
C.~Villani.
\newblock {\em Optimal transport, old and new}, volume 338 of {\em Grundlehren
  der Mathematischen Wissenschaften}.
\newblock Springer-Verlag, Berlin, 2009.

\bibitem{Zaev.14}
D.~Zaev.
\newblock On the {M}onge--{K}antorovich problem with additional linear
  constraints.
\newblock {\em Math. Notes}, 98(5):725--741, 2015.

\end{thebibliography}
\end{document}